\theoremstyle{plain}
\newtheorem{theorem}{Theorem}[section]
\newtheorem{corollary}[theorem]{Corollary}
\newtheorem{lemma}[theorem]{Lemma}
\newtheorem{proposition}[theorem]{Proposition}
\theoremstyle{definition}
\theoremstyle{definition}
\newtheorem{remark}[theorem]{Remark}
\newcommand{\NN}{\mathbb{N}}
\newcommand{\RR}{\mathbb{R}}
\newcommand{\PP}{\mathbb{P}}
\newcommand{\FF}{\mathbb{F}}
\newcommand{\EE}{\mathbb{E}}
\newcommand{\cR}{\mathcal{R}}
\newcommand{\var}{\operatorname{\mathbb{V}ar}}
\newcommand*\diff{\mathop{}\!\mathrm{d}}
\newcommand{\abs}[1]{\left\vert#1\right\vert}
\newcommand{\balpha}{\boldsymbol{\alpha}}
\newcommand{\bT}{\mathbf{T}}
\newcommand{\bone}{\mathbf{1}}
\newcommand{\PH}{\operatorname{PH}}
\begin{document}
	\title{Moments of the ruin time in a Lévy risk model}

	\author{Philipp Lukas Strietzel\thanks{Technische Universit\"at
			Dresden, Institut f\"ur Mathematische Stochastik, Fakultät Mathematik, 01062 Dresden, Germany, \href{mailto:anita.behme@tu-dresden.de}{\texttt{anita.behme@tu-dresden.de}} and \href{mailto:philipp.strietzel@tu-dresden.de}{\texttt{philipp.strietzel@tu-dresden.de}}, phone: +49-351-463-32425, fax:  +49-351-463-37251.}\; and Anita Behme$^\ast$}
	\date{\today}
	\maketitle
	
	\vspace{-1cm}
	\begin{abstract}
		We derive formulas for the moments of the ruin time in a Lévy risk model and use these to determine the asymptotic behavior of the moments of the ruin time as the initial capital tends to infinity. In the special case of the perturbed Cramér-Lundberg model with phase-type or exponentially distributed claims, we explicitly compute the first two moments of the ruin time. All our considerations distinguish between the profitable and the unprofitable setting. 
	\end{abstract}
	
	2020 {\sl Mathematics subject classification.} 60G51, %Processes  with  independent  increments; Levyprocesses
	60G40, %stopping   times;    optimal   stopping   problems;gambling theory 
	%(primary), 
	91G05 %Actuarial mathematics
	%(secondary)\\

	{\sl Keywords:} Cramér-Lundberg risk process; exponential claims; Laplace transforms; moments; phase-type distributions; ruin theory; ruin time; spectrally negative Lévy process
	
	%	\let\footnote=\endnote

	%%%%%%%%%%%%%%%%%%%%%%%%%%%%%%%%%%%%%%%%%%%%%%%%%%%%%%%%%%
	\section{Introduction}\label{S0}
	\setcounter{equation}{0}
	%%%%%%%%%%%%%%%%%%%%%%%%%%%%%%%%%%%%%%%%%%%%%%%%%%%%%%%%%%

Let $X=(X_t)_{t\geq 0}$ be a \emph{perturbed Cramér-Lundberg risk model}, i.e. set  
\begin{equation}\label{eq-perturbedCLmodel}
	X_t:=x+pt  - \sum_{i=1}^{N_t} S_i + \sigma B_t, \quad t\geq 0,
\end{equation} where  $x\geq 0$ is interpreted as \emph{initial capital}, $p>0$ denotes a constant \emph{premium rate}, the Poisson process $(N_t)_{t\geq 0}$ represents the \emph{claim counting process}, and the i.i.d. positive random variables $\{S_i, i\in\NN\}$ are the \emph{claim size variables} and independent of $(N_t)_{t\geq 0}$. The \emph{perturbation} is scaled by $\sigma\geq 0$, and $(B_t)_{t\geq 0}$ denotes a standard Brownian motion that is independent of all other sources of randomness. \\ 
In this article we compute moments of the \emph{ruin time} 
\begin{equation*} 
	\tau_0^- := \inf \left\{ t\geq 0: ~ X_t<0 \right\},
\end{equation*}  or, more precisely, the \emph{ruin time, given that ruin happens,} 
\begin{equation}\label{eq-ruinconditioned}
	(\tau_0^-|\tau_0^-<\infty)
\end{equation}
of $(X_t)_{t\geq 0}$ as in \eqref{eq-perturbedCLmodel}. In the applications considered in this article we will assume the claims $\{S_i,i\in\NN\}$ to have a phase-type distribution.

Note that in the \emph{non-profitable} case, i.e. whenever $\mathbb{E}_0[X_1]\in [-\infty, 0]$, the risk process $X$ enters the negative half-line almost surely and hence $\tau_0^-=(\tau_0^-|\tau_0^-<\infty)$.  Although this setting is hardly studied in the actuarial context, non-profitable risk processes appear e.g. in multivariate risk models where single branches are allowed to become negative, as long as this can be balanced out by other branches, see e.g. \cite{Hult2005}. Likewise, in regime-switching or dividend models it is sometimes reasonable to choose (controlled) risk processes that are unprofitable for certain time frames, see e.g. \cite{Kulenko2008} or \cite{Thonhauser2007}.\\
In the \emph{profitable} setting, where the \emph{net-profit condition} $\mathbb{E}_0[X_1]>0$ holds, the term \emph{ruin time} will be typically used for the conditioned quantity \eqref{eq-ruinconditioned}.  

The ruin time is an extensively studied quantity in the field of actuarial mathematics. In particular, for the classical \emph{Cramér-Lundberg risk model} without perturbation, i.e. for
\begin{equation*}
	X_t:=x+pt - \sum_{i=1}^{N_t} S_i, \quad t\geq 0,
\end{equation*}
various authors have studied the ruin time or the joint distribution of the ruin time, the surplus immediately before ruin, and the deficit at ruin, using different techniques, see e.g. \cite{Gerber1998, Lin1999, Lin2000} to mention just a few. In particular, in \cite{Lin2000} and \cite{EgidioDosReis2000}, recursion formulas for the moments of the ruin time are provided. In \cite{Drekic2003, Drekic2004} the approach of \cite{Lin2000} is taken up. This leads to closed form expressions for the $k$-th moment of the ruin time in the case of exponential claims, see \cite{Drekic2003}, and a Mathematica program that deals with the heavy algebra involved in calculating explicit moments of the ruin time of a general Cramér-Lundberg process provided in \cite{Drekic2004}. For discrete claim size distributions, moments of the ruin time have also been computed in \cite{Picard1998}.\\
Furthermore, e.g. in \cite{Dickson2002, Pitts2008}, approximations of the moments of the ruin time are considered. In \cite{Frostig2004} upper bounds for the expected ruin time are derived using the duality of the Cramér-Lundberg model with a single server queueing system.  The latter work has been extended in \cite{Frostig2012} to a renewal risk model with phase-type distributed claims.

In the context of the perturbed Cramér-Lundberg model or of an even more general L\'evy risk model, where $X$ is chosen to be any spectrally negative L\'evy process, $\tau_0^-$ is typically referred to as \emph{exit time} or \emph{first passage time}. Most results on $\tau_0^-$ and related quantities in this setting are, however, stated in terms of Laplace transforms, see e.g. \cite[Sec. 9.5]{DoneyBuch} for an overview. Recently, in \cite{BehmeStrietzel2021}, we proved necessary and sufficient conditions for finiteness of moments of the ruin time in a Lévy risk model. In the present article, after a brief explanation of some preliminaries in Section \ref{S1a}, in Section \ref{S1b} 
we derive semi-explicit formulas and asymptotics for the moments of the ruin time in a Lévy risk model.
Afterwards, we provide formulas for the first two  moments of the ruin time of the perturbed Cramér-Lundberg model with phase-type distributed claims in Section \ref{S2}, while in the subsequent Section \ref{S2b} we restrict to exponentially distributed claims which allow for even more explicit results. The final Section \ref{S3} contains the proofs of the results presented in Sections \ref{S1b} and \ref{S2}.

	%%%%%%%%%%%%%%%%%%%%%%%%%%%%%%%%%%%%%%%%%%%%%%%%%%%%%%%%%%
	\section{Spectrally negative Lévy processes and scale functions}\label{S1a}
	\setcounter{equation}{0}
	%%%%%%%%%%%%%%%%%%%%%%%%%%%%%%%%%%%%%%%%%%%%%%%%%%%%%%%%%%

The process $X$ defined in \eqref{eq-perturbedCLmodel} is a special case of a spectrally negative Lévy process, i.e. of a càdlàg stochastic process with independent and stationary increments that does not admit positive jumps.  We assume the process $X$ to be defined on a filtered  probability space $(\Omega,\mathcal{F}, \FF, \mathbb{P})$, and as usual we write $\mathbb{P}_x,$ $\mathbb{E}_x$ for the law and expectation of $X$ given $X_0=x$, respectively.\\ Spectrally negative Lévy processes are typically characterized by their so-called \emph{Laplace exponent} $\psi(\theta) := \tfrac{1}{t}\log \mathbb{E}_0\left[e^{\theta X_t}\right]$, which takes the form
\begin{equation} \label{eq-Laplaceexp}
	\psi(\theta) =  c \theta + \frac{1}{2}\sigma^2 \theta ^2 +  \int_{(-\infty, 0)} \left(e^{\theta x}-1-\theta x\mathds{1}_{\{|x|<1\}}\right)\Pi(\diff x), 
\end{equation}
with constants $c\in \RR$, $\sigma^2\geq 0$ and a Lévy measure $\Pi$ satisfying $\int_{(-\infty,0)} (1\wedge x^2)\Pi(\diff x)<\infty$. In the special case of the perturbed Cramér-Lundberg model \eqref{eq-perturbedCLmodel} this can be reduced to
\begin{equation}\label{eq-LaplaceexpPerturbedCL}
	\psi(\theta) =  p \theta + \frac{1}{2}\sigma^2 \theta ^2 + \lambda \int_{(0,\infty)} \left(e^{-\theta x}-1\right)F(\diff x), 
\end{equation} where $F$ denotes the cdf of the claim sizes $\{S_i,i\in\NN\}$ and $\lambda\geq 0$ is the intensity of the claim arrival process $(N_t)_{t\geq 0}$, such that $\Pi(-dx) = \lambda F(dx), x>0,$ and $p=c-\int_{(-1,0)} x \Pi(\diff x)$.

Our formulas for the moments of the ruin time rely on $q$-scale functions of the spectrally negative Lévy process $X$. Recall that for any $q\geq 0$ the \emph{$q$-scale function} $W^{(q)}\colon\mathbb{R}\to[0,\infty)$  of $X$ is the unique function satisfying  
\begin{equation} \label{eq_proof_Definition_scale_by_laplace}
	\int_0^\infty e^{-\beta x}W^{(q)}(x) \diff x = \frac{1}{\psi(\beta)-q},
\end{equation}
for all 
$$\beta >\Phi(q):= \sup\{\theta\geq 0 : ~ \psi(\theta) = q\}, \quad q\geq 0,$$
and such that $W^{(q)}(x)=0$  for $x<0$. \\
Note that  $q\mapsto W^{(q)}(x)$ may be extended analytically to $\mathbb{C}$, which means especially that on every compact subset of $\mathbb{C}$ it is infinitely often differentiable with bounded derivatives. Hence, limits of the type $q\downarrow 0$ for derivatives of $W^{(q)}$ w.r.t. $q$ exist and are finite.

The Laplace exponent's right inverse $q\mapsto \Phi(q)$ is strictly increasing on $[0,\infty)$, infinitely often differentiable on $(0,\infty)$ and such that $$\Phi(0)=0  \Leftrightarrow  \psi'(0+)\geq 0 \quad \text{while} \quad \Phi(0) >0 \Leftrightarrow \psi'(0+)<0,$$
where $\psi'(0+)=\lim_{\theta\downarrow 0} \psi'(\theta)$.
The function $\Phi$ is the well-defined inverse of $\psi(\theta)$ on the interval $[\Phi(0),\infty)$, i.e. 
\begin{equation*}
	\Phi(\psi(\theta)) = \theta \quad \text{ and } \quad  \psi(\Phi(q)) = q, \qquad \forall \theta \in [\Phi(0),\infty), ~ q\geq 0.
\end{equation*}

We refer to \cite{Kyprianou2014} for proofs of the given properties and a more thorough discussion of (spectrally negative) Lévy processes and scale functions. More detailed accounts on scale functions and their numerous applications can be found in \cite{Avram2020} and \cite{kuznetsov2011}.  

	Throughout this article $\partial_q^k f(q,x)$ denotes the $k$-th derivative of a function $f$ with respect to $q$, while $\partial_q:= \partial_q^1$. In case of only one parameter we will usually omit the subscript and also use the standard notation $\partial^k f(x)=f^{(k)}(x)$. 

 	%%%%%%%%%%%%%%%%%%%%%%%%%%%%%%%%%%%%%%%%%%%%%%%%%%%%%%%%%%
 \section{On moments of the ruin time}\label{S1b}
 \setcounter{equation}{0}
 %%%%%%%%%%%%%%%%%%%%%%%%%%%%%%%%%%%%%%%%%%%%%%%%%%%%%%%%%%
 
Throughout this section 
we consider a spectrally negative Lévy process $X$ with Laplace exponent $\psi$ given in \eqref{eq-Laplaceexp}.

  We exclude the case that $X$ is a pure drift which implies that  $\PP_x(\tau_0^-<\infty)>0$. If $X$ is of infinite variation, i.e. if $\sigma^2>0$ or if $\int_{(-1,0)} x \Pi(\diff x)=\infty$, it holds $\PP_0(\tau_0^-=0)=1$, and hence in this case we exclude the initial capital $x=0$ from our considerations.\\
By \cite[Thm. 3.1]{BehmeStrietzel2021} the $k$-th moment of the ruin time is finite if and only if one of the following two assumptions holds:
\begin{enumerate}\label{conditions}
	\item $\mathbb{E}_0[X_1]=\psi'(0+)<0$,
	\item $\mathbb{E}_0[X_1]=\psi'(0+)>0$ and $\EE_0[\abs{X_1}^{k+1}]<\infty$.  
\end{enumerate} 
We will thus exclude the case $\EE_0[X_1]=\psi'(0+)=0$ from now on. We also note that in the case of a perturbed Crámer-Lundberg model \eqref{eq-perturbedCLmodel} the assumption $\mathbb{E}_0[\abs{X_1}^{k+1}]<\infty$ is equivalent to $\mathbb{E}[S_1^{k+1}]<\infty$,  cf. \cite[Thm. 3.8]{Kyprianou2014}.

 The following proposition gives a representation of the $k$-th moment of the ruin time in terms of $\Phi$ and the scale function $W^{(q)}$ of the underlying process. Its proof is given in Section \ref{S3a}. Note that the special case $k=1$ of Equation \eqref{eq_generalMoments_convolution} below can also be derived from \cite[Thm. 6.9.A)]{Avram2020}.
 \begin{proposition} \label{Proposition_formulae_to_work_with}
	Let $X$ be a spectrally negative L\'evy process with Laplace exponent $\psi$. For any $x\geq 0$ and any $k\in \NN$ the $k$-th moment of the ruin time is given by 	
\begin{align}
\lefteqn{\mathbb{E}_x[(\tau_0^-)^k|\tau_0^-<\infty] }  \label{eq_general_formula} \\ 
	&=\frac{(-1)^k}{ \mathbb{P}_x(\tau_0^- <\infty)} \lim_{q\downarrow 0}\left(k \int_0^x \partial_q^{k-1} W^{(q)}(y)\diff y  -  \sum_{\ell=0}^k {k\choose \ell}\cdot \eta^{(\ell)}(q)\cdot \left( \partial_q^{k-\ell}W^{(q)}(x)\right)\right),  \nonumber  
\end{align}
where $\eta(q) := \frac{q}{\Phi(q)}$. Moreover, with $(W^{(0)})^{\ast k}(x)$ denoting the $k$-fold convolution of $W^{(0)}(x)$ with itself, for all $x>0$ 
\begin{equation} \label{eq_generalMoments_convolution} 
	\mathbb{E}_x[(\tau_0^-)^k|\tau_0^-<\infty] =\frac{(-1)^k \cdot k!}{ \mathbb{P}_x(\tau_0^- <\infty)} \left( \int_0^x (W^{(0)})^{\ast k}(y)\diff y  -  \sum_{\ell=0}^k  \frac{\eta^{(\ell)}(0+)}{\ell!}   (W^{(0)})^{\ast (k-\ell+1)}(x) \right). 
\end{equation}
In particular, the left-hand side of \eqref{eq_general_formula} and \eqref{eq_generalMoments_convolution} is finite if and only if the right-hand side is finite, respectively.  
\end{proposition}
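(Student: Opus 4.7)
The proof plan is to start from the well-known identity for the Laplace transform of the ruin time of a spectrally negative Lévy process (see e.g.\ \cite[Thm.~8.1]{Kyprianou2014}), namely
\begin{equation*}
\mathbb{E}_x\!\left[e^{-q\tau_0^-}\mathds{1}_{\{\tau_0^-<\infty\}}\right]
= Z^{(q)}(x) - \eta(q)\,W^{(q)}(x)
= 1 + q\int_0^x W^{(q)}(y)\,\diff y - \eta(q)\,W^{(q)}(x),
\end{equation*}
where $\eta(q)=q/\Phi(q)$. The $k$-th moment of $\tau_0^-$ on $\{\tau_0^-<\infty\}$ is then obtained from the standard identity $\mathbb{E}_x[(\tau_0^-)^k\mathds{1}_{\{\tau_0^-<\infty\}}]=(-1)^k\lim_{q\downarrow 0}\partial_q^k\mathbb{E}_x[e^{-q\tau_0^-}\mathds{1}_{\{\tau_0^-<\infty\}}]$, so the task reduces to computing $\partial_q^k$ of the right-hand side above and taking $q\downarrow 0$.

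For \eqref{eq_general_formula}, I would apply the Leibniz rule to $\eta(q)W^{(q)}(x)$ to produce the sum $\sum_{\ell=0}^k\binom{k}{\ell}\eta^{(\ell)}(q)\,\partial_q^{k-\ell}W^{(q)}(x)$. For the integral term, differentiation gives
\begin{equation*}
\partial_q^k\!\left(q\int_0^x W^{(q)}(y)\,\diff y\right)
= k\int_0^x \partial_q^{k-1}W^{(q)}(y)\,\diff y
+ q\int_0^x \partial_q^{k}W^{(q)}(y)\,\diff y,
\end{equation*}
and the second piece vanishes as $q\downarrow 0$ by the analyticity/boundedness of $q\mapsto W^{(q)}(x)$ mentioned in Section~\ref{S1a}. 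Dividing by $\mathbb{P}_x(\tau_0^-<\infty)$ yields the conditional moment. The finiteness of the right-hand side in the limit, and the legitimacy of the derivative–expectation exchange, is guaranteed by the conditions (i)/(ii) from \cite[Thm.~3.1]{BehmeStrietzel2021}; this equivalence gives the last sentence of the proposition.

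For \eqref{eq_generalMoments_convolution}, the key step is to identify $\partial_q^k W^{(q)}(x)\big|_{q=0}$. Differentiating the defining Laplace identity \eqref{eq_proof_Definition_scale_by_laplace} $k$ times in $q$ gives
\begin{equation*}
\int_0^\infty e^{-\beta x}\,\partial_q^k W^{(q)}(x)\,\diff x
= \frac{k!}{(\psi(\beta)-q)^{k+1}},
\end{equation*}
and evaluating at $q=0$ identifies the right-hand side as the Laplace transform of $k!\,(W^{(0)})^{*(k+1)}(x)$, because $1/\psi(\beta)$ is the Laplace transform of $W^{(0)}$. Laplace inversion thus yields $\partial_q^k W^{(q)}(x)|_{q=0} = k!\,(W^{(0)})^{*(k+1)}(x)$. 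Substituting this identity (and its analogue for $\partial_q^{k-1}$) into \eqref{eq_general_formula} and collecting the combinatorial factors $\binom{k}{\ell}(k-\ell)! = k!/\ell!$ produces \eqref{eq_generalMoments_convolution}.

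The main obstacle I expect is not the formal manipulation but the analytic justifications: interchanging $\lim_{q\downarrow 0}$ with $\partial_q^k$ inside the expectation, ensuring that $\eta^{(\ell)}(0+)$ is finite for $\ell\leq k$ in both the profitable (where $\Phi(0)=0$, so $\eta(q)=q/\Phi(q)$ is a removable $0/0$ and one must control $\Phi^{(\ell)}(0+)$ via moments of $X_1$) and unprofitable (where $\Phi(0)>0$, so $\eta(0+)=0$) regimes, and ensuring convergence of the convolution integrals in \eqref{eq_generalMoments_convolution}. All of these will again reduce to the moment conditions (i)/(ii) recalled before the proposition, together with the smoothness of $q\mapsto W^{(q)}(x)$ and $q\mapsto\Phi(q)$ stated in Section~\ref{S1a}.
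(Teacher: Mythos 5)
Your proposal is correct and follows essentially the same route as the paper: both differentiate the scale-function representation $\mathbb{E}_x[e^{-q\tau_0^-}\mathds{1}_{\{\tau_0^-<\infty\}}] = Z^{(q)}(x) - \eta(q)W^{(q)}(x)$ $k$ times in $q$, apply the Leibniz rule, and then identify $\partial_q^k W^{(q)}(x)\big|_{q=0} = k!\,(W^{(0)})^{*(k+1)}(x)$. The only cosmetic difference is that you obtain this last identity by differentiating the Laplace-transform relation $\int_0^\infty e^{-\beta x}W^{(q)}(x)\,\diff x = (\psi(\beta)-q)^{-1}$ and inverting, whereas the paper differentiates the power-series expansion $W^{(q)}(x)=\sum_{\ell\geq 0}q^\ell (W^{(0)})^{*(\ell+1)}(x)$ term by term; these are the same observation in two guises.
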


 The subsequent Lemma provides semi-explicit formulas for the first two moments of the ruin time under the assumptions (i) or (ii). 
Its proof is done by elementary calculus and a short sketch is provided in Section \ref{S3a}.  Using the same type of arguments, Lemma \ref{Theorem_FirstMoments} could be extended to cover higher moments of orders $k=3,4,\ldots$ Due to the complicated and lengthy expressions that arise in this procedure, we refrain from giving any details.

 \begin{lemma} \label{Theorem_FirstMoments}
 	Let $X$ be a spectrally negative L\'evy process with Laplace exponent $\psi$ and let $x\geq 0$.
 	\begin{enumerate}
 		\item Assume $\psi'(0+)<0$, then all moments of $\tau_0^-$ are finite and in particular
 		\begin{align}
 			\mathbb{E}_x[\tau_0^-] &= \frac{1}{\Phi(0)} W^{(0)}(x) - \int_0^x W^{(0)}(y)\diff y, \label{eq_Theorem_FirstMomentUnprof} \\
 			\mathbb{E}[(\tau_0^-)^2] & =  2 \int_0^x \lim_{q\downarrow 0}\partial_q W^{(q)}(y)\diff y - \frac{2}{\Phi(0)} \lim_{q\downarrow 0} \partial_q W^{(q)}(x) + \frac{2 \cdot W^{(0)}(x)}{\Phi(0)^2\cdot \psi'(\Phi(0))}. \label{eq_Theorem_SecondMomentUnprof}
 		\end{align}
 		\item Assume that $\psi'(0+)>0$, and that $\EE_0[X_1^2]<\infty$ or $\EE_0[|X_1|^3]<\infty$, respectively. Then
 		\begin{equation}\label{eq_Theorem_FirstMomentprof}
 			\mathbb{E}_x[\tau_0^-| \tau_0^-<\infty] = \frac{\psi'(0+) \cdot \lim_{q\downarrow 0} (\partial_q W^{(q)}(x)) + \frac{\psi''(0+)}{2\cdot\psi'(0+)} \cdot W^{(0)} (x) - \int_0^x W^{(0)}(y)\diff y}{1-\psi'(0+) \cdot W^{(0)}(x)}, 
 		\end{equation}  and 
 		\begin{align} \label{eq_Theorem_SecondMomentprof}
 			\lefteqn{\mathbb{E}_x[(\tau_0^-)^2| \tau_0^-<\infty] }\\ 
 			&= \frac{1}{1-\psi'(0+)\cdot W^{(0)}(x)} \cdot \bigg(2 \lim_{q\downarrow 0} \int_0^x \partial_q W^{(q)}(y)\diff y - \psi'(0+)\lim_{q\downarrow 0}\partial_q^2 W^{(q)}(x) \nonumber \\ 
 			& \qquad - \frac{\psi''(0+)}{\psi'(0+)} \cdot \lim_{q\downarrow 0}\partial_q W^{(q)}(x)- \left( \frac{ \psi'''(0+)}{3\cdot \psi'(0+)^2} - \frac{\psi''(0+)^2}{2\cdot \psi'(0+)^3} \right)W^{(0)}(x)  \bigg). \nonumber
 		\end{align}
 	\end{enumerate}
 \end{lemma}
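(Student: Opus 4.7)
The plan is to specialise the general identity \eqref{eq_general_formula} from Proposition \ref{Proposition_formulae_to_work_with} to $k=1,2$ and reduce everything to computing the right-limits $\eta^{(\ell)}(0+):=\lim_{q\downarrow 0}\eta^{(\ell)}(q)$ for $\ell\in\{0,1,2\}$. Expanding the binomial sum, the $k=1$ case reads
\begin{equation*}
\mathbb{E}_x[\tau_0^-\mid\tau_0^-<\infty]=-\frac{1}{\mathbb{P}_x(\tau_0^-<\infty)}\lim_{q\downarrow 0}\Bigl(\int_0^x W^{(q)}(y)\,\diff y-\eta(q)\,\partial_q W^{(q)}(x)-\eta'(q)W^{(q)}(x)\Bigr),
\end{equation*}
while the $k=2$ case reads
\begin{equation*}
\mathbb{E}_x[(\tau_0^-)^2\mid\tau_0^-<\infty]=\frac{1}{\mathbb{P}_x(\tau_0^-<\infty)}\lim_{q\downarrow 0}\Bigl(2\int_0^x\partial_q W^{(q)}(y)\,\diff y-\eta(q)\,\partial_q^2 W^{(q)}(x)-2\eta'(q)\,\partial_q W^{(q)}(x)-\eta''(q)W^{(q)}(x)\Bigr).
\end{equation*}
For the prefactor I would invoke the standard ruin-probability identities, namely $\mathbb{P}_x(\tau_0^-<\infty)=1$ in case (i) and $\mathbb{P}_x(\tau_0^-<\infty)=1-\psi'(0+)W^{(0)}(x)$ in case (ii).

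In case (i), where $\Phi(0)>0$, the function $q\mapsto q/\Phi(q)$ is smooth at the origin, so direct differentiation combined with the identity $\Phi'(q)=1/\psi'(\Phi(q))$ immediately yields $\eta(0+)=0$, $\eta'(0+)=1/\Phi(0)$, and $\eta''(0+)=-2/(\Phi(0)^2\psi'(\Phi(0)))$. Plugging these into the two specialisations above recovers \eqref{eq_Theorem_FirstMomentUnprof} and \eqref{eq_Theorem_SecondMomentUnprof}. In case (ii) we have $\Phi(0)=0$, and $\eta(q)=q/\Phi(q)$ must be expanded via a formal Taylor inversion. Differentiating $\psi(\Phi(q))=q$ successively at $q=0$ produces $\Phi'(0+)=1/\psi'(0+)$, $\Phi''(0+)=-\psi''(0+)/\psi'(0+)^3$, and $\Phi'''(0+)=-\psi'''(0+)/\psi'(0+)^4+3\psi''(0+)^2/\psi'(0+)^5$. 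Expanding $\Phi(q)/q$ to second order about $0$ and inverting the resulting series yields
\begin{equation*}
\eta(0+)=\psi'(0+),\qquad \eta'(0+)=\frac{\psi''(0+)}{2\psi'(0+)},\qquad \eta''(0+)=\frac{\psi'''(0+)}{3\psi'(0+)^2}-\frac{\psi''(0+)^2}{2\psi'(0+)^3}.
\end{equation*}
Inserting these values and $\mathbb{P}_x(\tau_0^-<\infty)=1-\psi'(0+)W^{(0)}(x)$ into the $k=1,2$ specialisations above produces \eqref{eq_Theorem_FirstMomentprof} and \eqref{eq_Theorem_SecondMomentprof} after straightforward algebraic simplification.

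The main obstacle is bookkeeping rather than ideas: the power-series inversion leading to $\eta''(0+)$ in case (ii) is elementary but error-prone, and the $k=2$ identities carry three distinct limits of $q$-derivatives of the scale function that must be tracked through the algebra. Finiteness of all these limits is guaranteed by the analyticity of $q\mapsto W^{(q)}(x)$ on $\mathbb{C}$ recorded in Section \ref{S1a}, together with the moment assumption in case (ii) which forces $\psi$ to admit a third derivative at $0+$.
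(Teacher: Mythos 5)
Your proposal is correct and follows essentially the same route as the paper: specialise Proposition \ref{Proposition_formulae_to_work_with} to $k=1,2$, compute $\eta^{(\ell)}(0+)$ for $\ell=0,1,2$ in each regime via repeated differentiation of $\psi(\Phi(q))=q$, and insert the ruin-probability prefactor from \cite[Thm. 8.1]{Kyprianou2014}. The values you obtain for $\eta(0+)$, $\eta'(0+)$, $\eta''(0+)$ in both cases agree with \eqref{eq-limiteta}, \eqref{eq_Eta1(0+)}, and \eqref{eq_Eta2(0+)} in the paper's proof.
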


 In order to evaluate any of the obtained formulas \eqref{eq_general_formula}-\eqref{eq_Theorem_SecondMomentprof} for a specific Lévy process $X$, it is necessary to have an explicit expression for its scale function. Collections of processes where this is the case can be found e.g. in \cite{Hubalek2010} and \cite{kuznetsov2011}. However, even in case of rather simple scale functions, the computations needed to obtain closed-form expressions for the moments of the ruin time involve serious computational efforts as we are going to see in Section \ref{S2}. 
 
 \begin{remark} Our approach towards the moments of the ruin time relies on the standard idea of differentiating the Laplace transform of the ruin time. As a basis for this, here and in \cite{BehmeStrietzel2021}, we decided to use the general representation of this Laplace transform via scale functions given e.g. in \cite[Thm. 8.1]{Kyprianou2014}. This approach then results in the above formulas that are valid for any spectrally negative Lévy process. While we decided to apply these formulas for special cases in the upcoming two sections, alternatively, one may directly impose additional assumptions on the risk process that lead to special representations of the Laplace transform. Again by differentiation these can then be used to obtain (recursive) formulas for the moments of the ruin time in specific models. As an example for such an approach we mention \cite{Lee2014}, where a general representation of the moments of the ruin time in a Sparre Andersen model with specified claim size distribution is given.\\
 	An alternative approach towards the computation of higher moments of the ruin time that has been used in the setting of the (perturbed) Cramér-Lundberg model can be found e.g. in \cite{Lin1999, Lin2000, Tsai2002}. In these papers the expected discounted penalty (Gerber-Shiu) functions associated to the model are shown to fulfill certain (defective) renewal equations (see also \cite{Li2005}). Solving these and choosing the appropriate penalty function one may then derive moments of the ruin time.
 \end{remark}

 The above formulas can be used to further study the $k$-th moment of the ruin time via its Laplace transform.  In the unprofitable case, this is done in the next theorem. Again, the proof is given in Section \ref{S3a}.

 \begin{theorem}\label{Theorem_Asymptotics_General_Moments}
 	Let $X$ be a spectrally negative Lévy process with Laplace exponent $\psi$ as in \eqref{eq-Laplaceexp} such that $\psi'(0+)\in [-\infty,0)$,  and fix $x\geq 0$. Then $\PP_x(\tau_0^-<\infty)=1$ and for all $k\in \NN$  the Laplace transform of $\mathbb{E}_x[(\tau_0^-)^k]$ is given by
 	\begin{equation}
 		\label{eq_proof_LaplaceTrafo_uk}
 			\int_0^\infty e^{-\beta x} \mathbb{E}_x[(\tau_0^-)^k] \diff x 
=(-1)^k\cdot k! \cdot \left(\frac{1}{\beta} \cdot \frac{1}{\psi(\beta)^k} - \sum_{\ell=1}^k  \frac{1}{\ell!} \eta^{(\ell)}(0+) \cdot \frac{1}{\psi(\beta)^{k-\ell+1}}\right), 
 	\end{equation}
 for all $\beta>0$ and $\eta(q) = \frac{q}{\Phi(q)}$. For $\beta=\Phi(0)>0$ the right-hand side of \eqref{eq_proof_LaplaceTrafo_uk} has to be understood in the limiting sense as
 \begin{equation} \label{eq_proof_LaplaceuinPhi}	\int_0^\infty e^{-\Phi(0)x} \mathbb{E}_x[(\tau_0^-)^k] \diff x  = (-1)^k \lim_{q\downarrow0} \partial^{k} \left(\frac{1}{\Phi(q)}\right). 
 \end{equation} 
 Moreover, for all $k\in\NN$, it holds
 	\begin{equation} \label{eq_Asymptotics_Unprof}
 		\lim_{x\to\infty} \frac{\mathbb{E}_x[(\tau_0^-)^k]}{x^k} 
 		= \frac{1}{|\mathbb{E}_0[X_1]|^k}.
 	\end{equation}
 \end{theorem}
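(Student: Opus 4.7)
The plan is to prove the three claims in order by starting from the convolution representation \eqref{eq_generalMoments_convolution} of Proposition \ref{Proposition_formulae_to_work_with}. Throughout, I would use that $\psi'(0+)<0$ forces $\Phi(0)>0$, so $X$ drifts to $-\infty$, $\mathbb{P}_x(\tau_0^-<\infty)=1$, and $\eta(0+)=\lim_{q\downarrow 0}q/\Phi(q)=0$. For \eqref{eq_proof_LaplaceTrafo_uk}, I would simply take the Laplace transform in $x$ of both sides of \eqref{eq_generalMoments_convolution}. The defining identity \eqref{eq_proof_Definition_scale_by_laplace} gives $\int_0^\infty e^{-\beta x}W^{(0)}(x)\diff x=1/\psi(\beta)$ for $\beta>\Phi(0)$, so $(W^{(0)})^{\ast m}$ has Laplace transform $1/\psi(\beta)^m$, while $x\mapsto \int_0^x(W^{(0)})^{\ast k}(y)\diff y$ contributes $1/(\beta\psi(\beta)^k)$. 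Since $\eta(0+)=0$ kills the $\ell=0$ summand, \eqref{eq_proof_LaplaceTrafo_uk} pops out on $\{\beta>\Phi(0)\}$; polynomial growth of $\mathbb{E}_x[(\tau_0^-)^k]$ (established in the third step) together with analytic continuation extends the identity to $\beta>0$, $\beta\neq\Phi(0)$.

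For \eqref{eq_proof_LaplaceuinPhi} I would parameterize $\beta=\Phi(q)$, so that $\psi(\beta)=q$, and use the Laurent expansion $1/\Phi(q)=\eta(q)/q=\sum_{\ell=1}^\infty (\eta^{(\ell)}(0+)/\ell!)\,q^{\ell-1}$ that follows from $\eta(0+)=0$ and the analyticity of $\eta$ near $0$. Substituting into \eqref{eq_proof_LaplaceTrafo_uk} and factoring $1/q^k$, the subtracted sum removes exactly the terms $\ell=1,\ldots,k$ of the Laurent series, leaving
\begin{equation*}
\int_0^\infty e^{-\Phi(q)x}\mathbb{E}_x[(\tau_0^-)^k]\diff x=(-1)^k k!\sum_{\ell=k+1}^\infty \frac{\eta^{(\ell)}(0+)}{\ell!}\,q^{\ell-k-1},
\end{equation*}
whose limit as $q\downarrow 0$ is $(-1)^k\eta^{(k+1)}(0+)/(k+1)$. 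Differentiating the same Laurent series $k$ times term by term shows $\lim_{q\downarrow 0}\partial^k(1/\Phi(q))=\eta^{(k+1)}(0+)/(k+1)$, since all contributions with $\ell\le k$ are annihilated and those with $\ell\ge k+2$ vanish in the limit, matching the right-hand side of \eqref{eq_proof_LaplaceuinPhi}.

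For \eqref{eq_Asymptotics_Unprof} I would first note that $x\mapsto\mathbb{E}_x[(\tau_0^-)^k]$ is nondecreasing, since spectral negativity makes $\tau_0^-$ under $\mathbb{P}_x$ equidistributed with the first-passage time $\tau_{-x}^-$ under $\mathbb{P}_0$, which is pathwise nondecreasing in $x$. From $\psi(\beta)\sim\psi'(0+)\beta$ as $\beta\downarrow 0$ and \eqref{eq_proof_LaplaceTrafo_uk}, the dominant term is $(-1)^k k!/(\beta\psi(\beta)^k)\sim k!/(|\psi'(0+)|^k\beta^{k+1})$, while each subtracted summand is $O(\beta^{-k-1+\ell})=o(\beta^{-k-1})$ for $\ell\ge 1$. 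Karamata's Tauberian theorem for monotone functions then delivers $\mathbb{E}_x[(\tau_0^-)^k]\sim x^k/|\psi'(0+)|^k$, which is \eqref{eq_Asymptotics_Unprof} since $\psi'(0+)=\mathbb{E}_0[X_1]$; the case $\psi'(0+)=-\infty$ is covered by the $o$-version of the same theorem. The main obstacle will be the bookkeeping in the second step: the individual terms $1/\psi(\beta)^{k-\ell+1}$ are each singular at $\beta=\Phi(0)$, so combining them into a finite limit requires justifying a termwise Laurent manipulation in a neighborhood of $q=0$, which rests on the smoothness of $\Phi$ recalled in Section \ref{S1a}. Once that is in place, the Tauberian step is essentially automatic.
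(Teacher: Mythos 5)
Your steps for \eqref{eq_proof_LaplaceuinPhi} and for the Tauberian conclusion are essentially sound (your Taylor-expansion treatment of $1/\Phi(q)=\eta(q)/q$ at $q=0$ is in fact a cleaner packaging of the paper's iterated l'Hospital computation, and the monotonicity of $x\mapsto\mathbb{E}_x[(\tau_0^-)^k]$ via $\tau_0^-\overset{d}{=}\tau_{-x}^-$ is exactly the right input for Karamata). However, there is a genuine circularity at the heart of your argument. You obtain \eqref{eq_proof_LaplaceTrafo_uk} from \eqref{eq_generalMoments_convolution} only on $\{\beta>\Phi(0)\}$, where \eqref{eq_proof_Definition_scale_by_laplace} converges, and you extend it to all $\beta>0$ by invoking ``polynomial growth of $\mathbb{E}_x[(\tau_0^-)^k]$ (established in the third step)''. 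But your third step derives that polynomial growth from the behavior of \eqref{eq_proof_LaplaceTrafo_uk} as $\beta\downarrow 0$ --- i.e.\ from the validity of the identity, and in particular the finiteness of the Laplace transform, precisely on the range $0<\beta\le\Phi(0)$ that you are trying to reach. Since $\Phi(0)>0$ in the unprofitable case, a Tauberian theorem at $\beta\downarrow 0$ cannot even be invoked until one knows $\int_0^\infty e^{-\beta x}\mathbb{E}_x[(\tau_0^-)^k]\,\dd x<\infty$ for $\beta$ below $\Phi(0)$; no amount of analytic continuation of the right-hand side substitutes for this, because the left-hand side could a priori be $+\infty$ there.

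This is exactly the difficulty the paper spends most of its proof on: it first proves the theorem \emph{conditionally} on the a priori bound $u_k(x)\le A_k+B_kx^k$ (Proposition \ref{Prop_Asymptotics_ExtraCond}) and then establishes that bound for all $k$ by induction --- the base case $k=1$ via the representation $W^{(0)}(x)=e^{\Phi(0)x}/\psi'(\Phi(0))-h(x)$ with $h$ bounded, and the induction step (Proposition \ref{Prop_extracondition}) via the convolution recursion $u_{k+1}=-(k+1)(u_k\ast W^{(0)})+(-1)^k\eta^{(k+1)}(0+)W^{(0)}$, where controlling the exponentially weighted term requires the already-proved identity \eqref{eq_proof_LaplaceuinPhi} and asymptotics \eqref{eq_Asymptotics_Unprof} at level $k$. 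Your proposal contains no independent argument for this growth bound, so as written the proof of \eqref{eq_proof_LaplaceTrafo_uk} for $0<\beta\le\Phi(0)$, and hence of \eqref{eq_Asymptotics_Unprof}, does not go through. To repair it you would need either to supply such an induction yourself or to prove $\mathbb{E}_x[(\tau_0^-)^k]=O(x^k)$ directly (e.g.\ by a renewal or martingale argument for the passage time below $-x$), taking care that the case $\psi'(0+)=-\infty$ is also covered.
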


 An analogue result on the asymptotic behavior of the moments of the ruin time in the profitable case does not hold, as we will also see in the following sections. Actually, whenever $\psi'(0+)>0$ we have  $\mathbb{P}_x(\tau_0^-<\infty) = 1-\psi'(0+) W^{(0)}(x)$, cf. \cite[Thm. 8.1]{Kyprianou2014}, and hence the pre-factor in \eqref{eq_general_formula} depends on $x$. Thus we can only consider the Laplace transform of $\mathbb{E}_x[(\tau_0^-)^k\cdot \mathds{1}_{\{\tau_0^-<\infty\}}]$ in this case as done in the following Proposition. Again, the proof is postponed to Section \ref{S3a}.

\begin{proposition}\label{Proposition_LaplaceTransform_profitable}
	Let $X$ be a spectrally negative Lévy process with Laplace exponent $\psi$ as in \eqref{eq-Laplaceexp} such that $\psi'(0+)\in(0,\infty)$, and fix $x\geq 0$. Choose $k\in \NN$ such that   $\mathbb{E}[\abs{X_1}^{k+1}]<\infty$. Then for any $\beta>0$
	\begin{align} \label{eq_Laplace_Transform_Profitable}
	\lefteqn{\int_0^\infty  e^{-\beta x} \mathbb{E}_x[(\tau_0^-)^k\cdot \mathds{1}_{\{\tau_0^-<\infty\}}]\diff x =(-1)^k \cdot   k!  \Bigg(\frac{1}{\beta \psi(\beta)^k}   - \frac{\psi'(0+)}{\psi(\beta)^{k+1}} } \\ &\qquad - \sum_{\ell=1}^k   \frac{1}{\ell!\cdot \psi(\beta)^{k-\ell+1}} \sum_{j=1}^\ell \frac{\psi^{(j+1)}(0+)}{(j+1)} \cdot B_{\ell,j}\left(\Phi'(0+),...,\Phi^{(\ell-j+1)}(0+)\right) \Bigg),\nonumber
	\end{align}
where $B_{\ell,j}$ denote the partial Bell polynomials. Moreover it holds 
\begin{equation}\label{eq_Limit_Laplace_Transform_Profitable}
\lim_{x\to\infty}	\mathbb{E}_x[(\tau_0^-)^k\cdot \mathds{1}_{\{\tau_0^-<\infty\}}] = \lim_{x\to\infty}	\mathbb{E}_x[(\tau_0^-)^k| \tau_0^-<\infty] \cdot  \PP_x(\tau_0^-<\infty)\to 0.
\end{equation}
\end{proposition}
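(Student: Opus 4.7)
The plan is to first establish the Laplace-transform identity \eqref{eq_Laplace_Transform_Profitable} by differentiating a generating function in $q$, and then to use it together with asymptotics of scale functions to deduce \eqref{eq_Limit_Laplace_Transform_Profitable}.

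For \eqref{eq_Laplace_Transform_Profitable}, I would start from the standard identity $\mathbb{E}_x[e^{-q\tau_0^-}\mathds{1}_{\{\tau_0^-<\infty\}}] = Z^{(q)}(x) - \eta(q)W^{(q)}(x)$ with $Z^{(q)}(x) = 1 + q\int_0^x W^{(q)}(y)\diff y$, see \cite[Thm. 8.1]{Kyprianou2014}; under $\psi'(0+)>0$ one has $\Phi(0)=0$ and therefore $\eta(0+) = 1/\Phi'(0+) = \psi'(0+)$. Taking the Laplace transform in $x$ via \eqref{eq_proof_Definition_scale_by_laplace} yields the compact form
$$\int_0^\infty e^{-\beta x}\,\mathbb{E}_x[e^{-q\tau_0^-}\mathds{1}_{\{\tau_0^-<\infty\}}]\diff x = \frac{\psi(\beta)/\beta - \eta(q)}{\psi(\beta)-q}.$$
Differentiating this $k$ times with respect to $q$, justified by the analyticity of $q\mapsto W^{(q)}$ noted after \eqref{eq_proof_Definition_scale_by_laplace}, evaluating at $q=0$ by Leibniz's rule, and separating the $\ell=0$ contribution $\eta(0+)=\psi'(0+)$ then recovers the structure of \eqref{eq_Laplace_Transform_Profitable}, modulo identifying the coefficients $\eta^{(\ell)}(0+)$ with the claimed Bell-polynomial expression.

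For that identification, write $\eta(q) = \psi(\Phi(q))/\Phi(q) = h(\Phi(q))$ with $h(z) := \psi(z)/z$. Since $\psi(0)=0$, the function $h$ extends analytically to a neighborhood of $0$ with Taylor coefficients $h^{(j)}(0) = \psi^{(j+1)}(0+)/(j+1)$. Faà di Bruno's formula applied to $\eta = h\circ\Phi$ at $q=0$, using $\Phi(0)=0$, then yields
$$\eta^{(\ell)}(0+) = \sum_{j=1}^\ell\frac{\psi^{(j+1)}(0+)}{j+1}\,B_{\ell,j}\bigl(\Phi'(0+),\dots,\Phi^{(\ell-j+1)}(0+)\bigr),$$
completing the proof of \eqref{eq_Laplace_Transform_Profitable}.

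For \eqref{eq_Limit_Laplace_Transform_Profitable}, the middle equality is the definition of conditional expectation, which is well-posed since $\mathbb{E}_x[(\tau_0^-)^k|\tau_0^-<\infty]$ is finite by assumption. For the limit itself, I would combine \eqref{eq_generalMoments_convolution} with $\mathbb{P}_x(\tau_0^-<\infty) = 1 - \psi'(0+)W^{(0)}(x)$ to express $f_k(x) := \mathbb{E}_x[(\tau_0^-)^k\mathds{1}_{\{\tau_0^-<\infty\}}]$ as an explicit signed combination of $\int_0^x(W^{(0)})^{*k}(y)\diff y$ and the iterated convolutions $(W^{(0)})^{*m}(x)$. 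Karamata's Tauberian theorem applied to $\int_0^\infty e^{-\beta x}(W^{(0)})^{*m}(x)\diff x = 1/\psi(\beta)^m$, together with the Taylor expansion $\psi(\beta) = \psi'(0+)\beta + \tfrac{\psi''(0+)}{2}\beta^2 + \dots$, delivers for each convolution an asymptotic expansion in powers of $x$ plus a remainder that vanishes as $x\to\infty$. The main technical obstacle is to check that all polynomial contributions in $x$ cancel out. This cancellation is forced by the identity \eqref{eq_Laplace_Transform_Profitable}: the apparent poles at $\beta = 0$ in the individual summands $1/\psi(\beta)^{k-\ell+1}$ cancel precisely because the $\eta^{(\ell)}(0+)$ are defined via $\eta = h\circ\Phi$, so the Laplace transform of $f_k$ tends to a finite limit as $\beta\downarrow 0$; by Laplace-transform uniqueness, the polynomial part of the asymptotic expansion of $f_k$ must then vanish identically, leaving only the vanishing remainder and hence $f_k(x)\to 0$ as $x\to\infty$.
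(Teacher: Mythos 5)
Your treatment of \eqref{eq_Laplace_Transform_Profitable} follows essentially the same route as the paper: obtain the Laplace transform of $u_k(x):=\mathbb{E}_x[(\tau_0^-)^k\mathds{1}_{\{\tau_0^-<\infty\}}]$ in terms of $\eta^{(\ell)}(0+)$ and then convert those derivatives to Bell-polynomial form via Fa\`a di Bruno applied to $\eta = h\circ\Phi$ with $h(z)=\psi(z)/z$. One caveat: your claim that $h$ "extends analytically to a neighborhood of $0$" is not justified under the stated hypotheses. The moment assumption $\mathbb{E}[|X_1|^{k+1}]<\infty$ guarantees only $\psi^{(k+1)}(0+)<\infty$; higher derivatives of $\psi$ at $0+$ may fail to exist, so $h$ need not be analytic (nor even $C^{k+1}$) there. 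The substance you need, namely $\lim_{\beta\downarrow 0}h^{(j)}(\beta)=\psi^{(j+1)}(0+)/(j+1)$ for $j\le k$, is true but requires an argument — the paper proves it as a separate lemma by expanding with the general Leibniz rule, subtracting a Taylor polynomial, and applying l'Hospital $(k{+}1)$ times. You cannot get it for free from analyticity.

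The real gap is in your argument for \eqref{eq_Limit_Laplace_Transform_Profitable}. You propose to extract from Karamata's theorem "an asymptotic expansion in powers of $x$ plus a remainder that vanishes" for each iterated convolution $(W^{(0)})^{\ast m}(x)$, then argue that the polynomial parts must cancel because $\hat u_k(\beta)$ stays bounded as $\beta\downarrow 0$. This does not work at the level of rigor claimed. Karamata's Tauberian theorem (under a monotonicity or one-sidedness hypothesis) gives the leading-order behavior of an integral $\int_0^x$ of the function, not a full expansion with a pointwise-vanishing remainder; obtaining such a refined expansion for each $(W^{(0)})^{\ast m}$ is itself a nontrivial Tauberian problem and would require additional regularity input, which you have not supplied. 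Moreover, the final inference is flawed as stated: even granting that the "polynomial part" of $u_k$ vanishes so that $\hat u_k(0+)<\infty$ (i.e.\ $u_k\in L^1$), integrability of $u_k$ does not imply $u_k(x)\to 0$ pointwise; one needs a genuine Tauberian condition. The paper handles this differently: it shows directly, by a $k$-fold l'Hospital argument exploiting the two key limit identities for $\varphi^{(k)}$ and for $\partial_q^n\sum_\ell \frac{\eta^{(\ell)}(0+)}{\ell!}\psi(q)^\ell$, that $\beta\,\hat u_k(\beta)\to 0$; it then integrates by parts to recognize $\beta\hat u_k(\beta)-u_k(0+)$ as the Laplace transform of $u_k'$ (which exists a.e.\ by the regularity of $W^{(0)}$), and appeals to a Tauberian/Abelian result for that transform to conclude $\int_0^x u_k' \to -u_k(0+)$, whence $u_k(x)\to 0$. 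If you want to keep a Tauberian flavor, you should replace your expansion-and-cancellation step with an argument of this structure; otherwise the conclusion $u_k(x)\to 0$ remains unproved.
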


\section{Phase-type distributed claims}\label{S2}
 \setcounter{equation}{0}

 From here onwards, we consider the perturbed Crámer-Lundberg risk model $X$ as in \eqref{eq-perturbedCLmodel}, where - as before - we exclude the trivial case that $X$ is a pure drift, as well as $x=0$ if  $\sigma^2>0$.   

In this section, we assume that $\{S_i,i\in\NN\}$ is a sequence of i.i.d. random variables with \emph{phase-type distribution}, i.e. $S_i\sim \PH_d(\balpha, \bT)$, with the cdf and density of $S_i$ being given by
\begin{equation*}
	F(z) = 1-\balpha e^{\mathbf{T}z}\mathbf{1}, \qquad \text{and} \qquad f(z) = -\balpha e^{\mathbf{T}z} \bT \bone,  
\end{equation*} 
respectively. Here,  $d\in \NN$, $\balpha\in \RR_{\geq 0}^d$ with $\|\balpha\|_1=1$, $\bT\in\RR^{d\times d}$ is an invertible subintensity matrix and $\bone$ is the $d$-dimensional column vector of ones.
It is well known, see e.g. \cite[Thm. 3.1.16 and Cor. 3.1.18]{bladtnielsen}, that in this case
$$\EE[S_i]=\balpha(-\bT)^{-1} \bone, \quad \text{and }\quad \EE[S_i^2]=2 \balpha (-\bT)^{-2} \bone,$$
and that all moments of $S_i$ exist.
Further, the Laplace exponent of the process $X$ in the current setting  follows from \eqref{eq-LaplaceexpPerturbedCL} and \cite[Thm. 3.1.19]{bladtnielsen} to be 
\begin{equation} \label{eq_Ex_PHLevy_LaplaceExponent}
	\psi(\theta) =  p \theta + \frac{1}{2}\theta^2\sigma^2 - \lambda \left(\balpha(\theta \mathbf{I}-\mathbf{T})^{-1} \bT \bone + 1 \right), \quad \theta\geq 0, 
\end{equation}
with the unit matrix $\mathbf{I}\in\RR^{d\times d}$.
Whenever $\psi'(0+)\neq 0$, the $q$-scale functions of $X$ in the current setting are known explicitly. Namely, let \begin{equation*}
	\mathcal{R}_q := \{z\in\mathbb{C}: ~ \psi(z)=q \text{ and } z\neq \Phi(q)\}, \quad q\geq 0,
\end{equation*}
be the set of (possibly complex) $q$-roots of $\psi$. Then all $z\in \cR_q$ have non-positive real part $\operatorname{\mathfrak{Re}}(z)\leq 0$. Further, we write $n=\abs{\mathcal{R}_q}$ for the number of distinct roots in $\cR_q$, denote $\cR_q=\{\phi_i(q), q=1,\ldots, n\}$ and assume w.l.o.g. that 
$\operatorname{\mathfrak{Re}}(\phi_n(q))\leq ... \leq \operatorname{\mathfrak{Re}}(\phi_1(q))\leq 0$.
Then, assuming that the multiplicity of each $z\in \cR_q$ is one, the $q$-scale function of the process \eqref{eq-perturbedCLmodel} with $S_i\sim \PH_d(\balpha,\bT)$ is given by
\begin{equation} \label{eq_Ex_PHLevy_qScale}
	W^{(q)}(x) = \frac{e^{\Phi(q)x}}{\psi'(\Phi(q))}  + \sum_{i=1}^n \frac{e^{\phi_i(q)x}}{\psi'(\phi_i(q))}, \quad x\geq 0, q\geq 0,
\end{equation}
as stated in \cite[Eq. (5)]{Ivanovs2021}, which relies on  \cite[Sec. 5.4]{kuznetsov2011}. For $\psi'(0+)<0$ the above form of the $q$-scale function has also been given in \cite{Egami2014}, while a special case of $\psi'(0+)>0$ can also be found in \cite[Eq. (9)]{Kyprianou2007}. Note that several of these sources also consider the case of multiple roots, but due to the more lengthy form of the resulting scale functions we shall exclude this case in our exposures. \\
The explicit form of the scale functions \eqref{eq_Ex_PHLevy_qScale} allows for an evaluation of the formulas given in Lemma \ref{Theorem_FirstMoments}, which result in explicit expressions for the first two moments of the ruin time for the considered processes as they will be presented in the following two propositions.  
The technical and lengthy proofs of both propositions are sketched in Section \ref{S3b}.

 We start with our result in the unprofitable setting.
\begin{proposition} \label{Proposition_Unprofitable_PH}
	Let $X$ have the Laplace exponent \eqref{eq_Ex_PHLevy_LaplaceExponent} and assume that $\psi'(0+)<0$. Then for all $x\geq 0$ it holds
	\begin{align*}
		\mathbb{E}_x[\tau_0^-] &= -\frac{x}{\psi'(0+)}+ C_1^{\downarrow} +  \epsilon_1(x), \quad \text{and}\\ 
		\mathbb{E}_x[(\tau_0^-)^2] &=\frac{x^2}{\psi'(0+)^2}   - \frac{2}{\psi'(0+)^2}\left( \frac{\psi''(0+)}{\psi'(0+)} + \frac{1}{\Phi(0)}\right) \cdot x + C_2^{\downarrow} + \epsilon_2(x),
	\end{align*}
	with
		\begin{equation*}
		\begin{aligned}
			C_1^{\downarrow}:={}&\frac{1}{\Phi(0)p}\mathds{1}_{\{\sigma^2=0\}} - \epsilon_1(0) = \frac{1}{\Phi(0)\psi'(0+)} +\frac{\psi''(0+)}{2 \psi'(0+)^{2}},  \\ 
			C_2^{\downarrow} :={}&\frac{2}{\Phi(0)^2 \psi'(\Phi(0)) p }\mathds{1}_{\{\sigma^2=0\}}  - \epsilon_2(0)\\
			={}&  \frac{2\psi''(0+)}{\Phi(0)\psi'(0+)^3} + \frac{2}{\Phi(0)^2 \psi'(\Phi(0)) \psi'(0+)} + \frac{3\psi''(0+)^2}{2\psi'(0+)^4} - \frac{2\psi'''(0+)}{3\psi'(0+)^3},
		\end{aligned}
	\end{equation*}	
and 
\begin{align*}
	\epsilon_1(x) := {}&\sum_{i=2}^n e^{\phi_i(0)x} \cdot \frac{1}{\psi'(\phi_i(0))}\left(\frac{1}{\Phi(0)}-\frac{1}{\phi_i(0)}\right), \\ 
	\epsilon_2(x) := {}&  2 \sum_{i=2}^n \left(\frac{e^{\phi_i(0)x}\cdot x}{\psi'(\phi_i(0))^2}\left( \frac{1}{\phi_i(0)} - \frac{1}{\Phi(0)} \right)  + e^{\phi_i(0)x}\cdot   \frac{\psi''(\phi_i(0))}{\psi'(\phi_i(0))^3}\left(  \frac{1}{\Phi(0)}- \frac{1}{\phi_i(0)} \right) \right. \\
	& \left.\qquad + e^{\phi_i(0)x}\cdot \left( \frac{1}{\Phi(0)^2  \psi'(\Phi(0))\psi'(\phi_i(0))} -  		  \frac{1}{\psi'(\phi_i(0))^2\phi_i(0)^2}  \right)\right),
\end{align*}
such that for $x\to \infty$ we have $\epsilon_j(x)\to 0$, $j=1,2,$  exponentially fast, since $\operatorname{\mathfrak{Re}}\phi_i(0)<0$ for all $i=2,...,n$. 
In the case $x=0$ it holds
$$\EE_0[\tau_0^-] = \frac{1}{\Phi(0)p} \mathds{1}_{\{\sigma^2=0\}}, \qquad \text{ and } \qquad \mathbb{E}_0[(\tau_0^-)^2] = \frac{2}{\Phi(0)^2 \psi'(\Phi(0)) }\mathds{1}_{\{\sigma^2=0\}}.$$	
\end{proposition}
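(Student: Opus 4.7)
The plan is to evaluate the formulas of Lemma \ref{Theorem_FirstMoments}(i) using the explicit representation \eqref{eq_Ex_PHLevy_qScale} of the $q$-scale function. A preliminary observation used throughout is that, since $\psi(0)=0$ and $\psi'(0+)<0$, the root $\phi_1(0)=0$ belongs to $\cR_0$, so the $i=1$ summand in \eqref{eq_Ex_PHLevy_qScale} contributes the non-exponential constant $1/\psi'(0+)$ and, after integration in $y$, produces the polynomial-in-$x$ part of each moment.

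For the first moment, I substitute \eqref{eq_Ex_PHLevy_qScale} into \eqref{eq_Theorem_FirstMomentUnprof} and carry out the elementary integration. The $e^{\Phi(0)x}$ contributions of $W^{(0)}(x)/\Phi(0)$ and of $\int_0^x W^{(0)}(y)\diff y$ cancel, the $i=1$ term produces the linear part $-x/\psi'(0+)$, the summands with $i\geq 2$ collect into $\epsilon_1(x)$, and the constant remainder is
\begin{equation*}
  \frac{1}{\Phi(0)\psi'(0+)}+\frac{1}{\Phi(0)\psi'(\Phi(0))}+\sum_{i=2}^n\frac{1}{\phi_i(0)\psi'(\phi_i(0))}.
\end{equation*}

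For the second moment I first differentiate \eqref{eq_Ex_PHLevy_qScale} in $q$. Differentiating $\psi(\Phi(q))=q$ and $\psi(\phi_i(q))=q$ gives $\Phi'(q)=1/\psi'(\Phi(q))$ and $\phi_i'(q)=1/\psi'(\phi_i(q))$, hence, for every root $\phi\in\{\Phi,\phi_1,\ldots,\phi_n\}$,
\begin{equation*}
  \partial_q\!\left.\frac{e^{\phi(q)x}}{\psi'(\phi(q))}\right|_{q=0}=\frac{x\,e^{\phi(0)x}}{\psi'(\phi(0))^2}-\frac{\psi''(\phi(0))\,e^{\phi(0)x}}{\psi'(\phi(0))^3}.
\end{equation*}
Plugging this into \eqref{eq_Theorem_SecondMomentUnprof}, integrating termwise with $\int_0^x y e^{\alpha y}\diff y=x e^{\alpha x}/\alpha-(e^{\alpha x}-1)/\alpha^2$ (and the limit $x^2/2$ at $\alpha=0$ for the $i=1$ term), the $e^{\Phi(0)x}$ contributions cancel as before, the $i=1$ block produces the full polynomial part $x^2/\psi'(0+)^2-\bigl(2\psi''(0+)/\psi'(0+)^3+2/(\Phi(0)\psi'(0+)^2)\bigr)x$, the $i\geq 2$ summands assemble into $\epsilon_2(x)$, and the remainder is a long combination of residue-type sums.

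The crux is to identify these constant remainders with the two advertised forms of $C_1^\downarrow$ and $C_2^\downarrow$. Because $\{S_i\}$ is phase-type, $\psi$ is a rational function, so $1/(\beta\psi(\beta)^m)$ has simple poles at $\Phi(0)$ and the $\phi_i(0)$'s and a pole of order $m+1$ at $0$. Comparing the sum of all finite residues with the residue at infinity, which is controlled by $1/\psi(\beta)\sim 2/(\sigma^2\beta^2)$ if $\sigma^2>0$ and $\sim 1/(p\beta)$ if $\sigma^2=0$, yields identities such as
\begin{align*}
  \sum_{i=2}^n\frac{1}{\psi'(\phi_i(0))}&=\frac{1}{p}\mathds{1}_{\{\sigma^2=0\}}-\frac{1}{\psi'(\Phi(0))}-\frac{1}{\psi'(0+)},\\
  \sum_{i=2}^n\frac{1}{\phi_i(0)\psi'(\phi_i(0))}&=\frac{\psi''(0+)}{2\psi'(0+)^2}-\frac{1}{\Phi(0)\psi'(\Phi(0))},
\end{align*}
plus two further identities obtained from $1/\psi(\beta)^2$ and $1/(\beta\psi(\beta)^2)$ that provide the sums involving $\psi''(\phi_i(0))$ and $1/\phi_i(0)^2$ occurring in $C_2^\downarrow$. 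The second identity above immediately reduces the first-moment constant to $C_1^\downarrow=1/(\Phi(0)\psi'(0+))+\psi''(0+)/(2\psi'(0+)^2)$, and plugging the remaining identities into the second-moment constant produces the closed form of $C_2^\downarrow$. The equivalence with the alternative form involving $\mathds{1}_{\{\sigma^2=0\}}$ and $\epsilon_j(0)$ follows from the same identities, and the $x=0$ formulas come out as $C_j^\downarrow+\epsilon_j(0)$ by definition. The main obstacle is the bookkeeping of the many residue-type contributions to the second-moment constant, together with keeping careful track of the $\sigma^2=0$ versus $\sigma^2>0$ distinction in the residue identities, where the decay of $1/\psi(\beta)$ at infinity differs by one order.
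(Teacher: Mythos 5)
Your approach is essentially the same as the paper's: substitute the explicit phase-type scale function \eqref{eq_Ex_PHLevy_qScale} into Lemma \ref{Theorem_FirstMoments}(i), integrate and differentiate termwise, isolate the $i=1$ (root at zero) block as the polynomial part and the $i\geq 2$ blocks as the $\epsilon_j$ terms, then collapse the resulting residue-type constants via algebraic identities among the roots of $\psi$. The only cosmetic difference is that you phrase those identities as sums of finite residues balanced against a residue at infinity of $1/(\beta^a\psi(\beta)^b)$, whereas the paper derives exactly the same identities from the partial fraction decomposition \eqref{eq_Helper_NewShortedProof} (evaluated at $\theta=0$ and differentiated in $q$) together with the boundary value $W^{(q)}(0)=p^{-1}\mathds{1}_{\{\sigma^2=0\}}$; these are two wordings of the same fact about the rational function $1/\psi$.
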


In the profitable setting, the first two moments of the ruin time can be expressed as follows. 
\begin{proposition} 
	\label{Proposition_Profitable_PH}
	Let $X$ have the Laplace exponent \eqref{eq_Ex_PHLevy_LaplaceExponent} and assume that $\psi'(0+)>0$. Then for all $x\geq 0$ it holds
	\begin{align*}
		\lefteqn{\mathbb{E}_x[\tau_0^-|\tau_0^-<\infty] 	=  -  \bigg(\sum_{i=1}^n \frac{e^{\phi_i(0+)x}}{\psi'(\phi_i(0+))} \bigg) ^{-1} \cdot\bigg( \sum_{i=1}^n \frac{ e^{\phi_i(0+)x}\cdot  x}{\psi'(\phi_i(0+))^2}  } \\
		& - \sum_{i=1}^n  e^{\phi_i(0+)x} \cdot \left(\frac{\psi''(\phi_i(0+))}{\psi'(\phi_i(0+))^3}  - \frac{\psi''(0+)}{2 \psi'(0+)^2 \psi'(\phi_i(0+))} + \frac{1}{\psi'(0+) \psi'(\phi_i(0+)) \phi_i(0+)} \right)\bigg) ,\nonumber 
		\end{align*}
	and 
	\begin{align*}
		\mathbb{E}_x[(\tau_0^-)^2|\tau_0^-<\infty] 
		&=  \bigg( \sum_{i=1}^n \frac{e^{\phi_i(0+)x}}{\psi'(\phi_i(0+))} \bigg)^{-1} \cdot \bigg( \sum_{i=1}^n e^{\phi_i(0+)x} \left( \frac{ x^2}{\psi'(\phi_i(0+))^3} +   B_i^\uparrow  x  + C_i^\uparrow   \right) \bigg),
	\end{align*}
	with
	\begin{align*}
		B_i^\uparrow& :=  \frac{\psi''(0+)}{\psi'(0+)^2\psi'(\phi_i(0+))^2}  - \frac{2}{\psi'(\phi_i(0+))^2\phi_i(0+)\psi'(0+)} - \frac{3  \psi''(\phi_i(0+))}{\psi'(\phi_i(0+))^4} , \\
		\text{and} \quad 			
		C_i^\uparrow &:= \frac{2}{\psi'(\phi_i(0+))^2\phi_i(0+)^2\psi'(0+)} - \frac{\psi'''(\phi_i(0+))}{\psi'(\phi_i(0+))^4} +  \frac{ \psi'''(0+)}{3 \psi'(\phi_i(0+)) \psi'(0+)^3} \\
		&\qquad - \frac{\psi''(0+)^2}{2\psi'(\phi_i(0+)) \psi'(0+)^4} - \frac{\psi''(\phi_i(0+))}{\psi'(\phi_i(0+))} \cdot B_i^\uparrow.
	\end{align*}
In particular
\begin{align*}
	\mathbb{E}_0[\tau_0^-|\tau_0^-<\infty] &= \left(\frac{1}{p} \mathds{1}_{\{\sigma^2=0\}} - \frac{1}{\psi'(0+)} \right)^{-1}\cdot \frac{\psi''(0+)}{2\psi'(0+)^2} \frac{1}{p} \mathds{1}_{\{\sigma^2=0\}},\\
\text{and} \quad 	\mathbb{E}_0[(\tau_0^-)^2|\tau_0^-<\infty] &= \left(\frac{1}{p} \mathds{1}_{\{\sigma^2=0\}} - \frac{1}{\psi'(0+)} \right)^{-1} \cdot \left( \frac{\psi'''(0+)}{3\psi'(0+)^3} - \frac{\psi''(0+)^2}{2\psi'(0+)^4}\right)
  \frac{1}{p} \mathds{1}_{\{\sigma^2=0\}}.
\end{align*}
\end{proposition}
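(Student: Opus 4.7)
The plan is to start from the semi-explicit formulas \eqref{eq_Theorem_FirstMomentprof} and \eqref{eq_Theorem_SecondMomentprof} of Lemma \ref{Theorem_FirstMoments}(ii), which are applicable since $\psi'(0+)>0$ and $S_i\sim\PH_d(\balpha,\bT)$ has finite moments of all orders, hence $\EE_0[|X_1|^k]<\infty$ for every $k\in\NN$. The entire task reduces to evaluating, for the phase-type scale function \eqref{eq_Ex_PHLevy_qScale}, the four ingredients $W^{(0)}(x)$, $\lim_{q\downarrow 0}\partial_q W^{(q)}(x)$, $\lim_{q\downarrow 0}\partial_q^2 W^{(q)}(x)$ and $\lim_{q\downarrow 0}\int_0^x \partial_q W^{(q)}(y)\diff y$, and then assembling them into the two ratios.

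The first key computational device is the Taylor expansion of $\Phi$ and of each root $\phi_i$ around $q=0$. Since $\psi(\Phi(q))=q$ and, in the profitable case, $\Phi(0)=0$, implicit differentiation gives
\begin{equation*}
\Phi'(0+) = \frac{1}{\psi'(0+)}, \qquad \Phi''(0+) = -\frac{\psi''(0+)}{\psi'(0+)^{3}}, \qquad \Phi'''(0+) = \frac{3\psi''(0+)^{2}-\psi'''(0+)\psi'(0+)}{\psi'(0+)^{5}},
\end{equation*}
and analogous formulas hold for $\phi_i$ with $\psi'(0+)$ replaced by $\psi'(\phi_i(0))$, etc.\ (here I would use the faá di Bruno / Bell polynomial formalism, as was already invoked in Proposition \ref{Proposition_LaplaceTransform_profitable}). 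Differentiating each summand $e^{\phi(q)x}/\psi'(\phi(q))$ in \eqref{eq_Ex_PHLevy_qScale} once or twice in $q$ produces, via the chain rule, only the quantities $\phi(0)$, $\phi'(0)$, $\phi''(0)$, $\psi'(\phi(0))$, $\psi''(\phi(0))$, $\psi'''(\phi(0))$ together with the exponential $e^{\phi(0)x}$ and polynomial factors in $x$. The integral $\int_0^x \partial_q W^{(q)}(y)\diff y$ is handled identically after interchanging the $q$-derivative with the $y$-integral (legitimate by the analyticity properties of $q\mapsto W^{(q)}$ recalled in Section \ref{S1a}).

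The second key observation concerns the ``$\Phi$-branch'' of the scale function. Because $\Phi(0)=0$, the term $e^{\Phi(q)x}/\psi'(\Phi(q))$ degenerates at $q=0$ to the $x$-free constant $1/\psi'(0+)$, and its $q$-derivatives produce pure polynomials in $x$ (with no exponential damping), while the remaining summands carry the decaying exponentials $e^{\phi_i(0)x}$. From $W^{(0)}(x) = 1/\psi'(0+) + \sum_{i=1}^n e^{\phi_i(0)x}/\psi'(\phi_i(0))$ one obtains
\begin{equation*}
1 - \psi'(0+)W^{(0)}(x) = -\psi'(0+)\sum_{i=1}^n \frac{e^{\phi_i(0)x}}{\psi'(\phi_i(0))},
\end{equation*}
which is precisely the denominator (up to the factor $-\psi'(0+)$) appearing in the claimed expressions; the factor $-\psi'(0+)$ must then cancel against the pure-polynomial parts in the numerators. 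This cancellation is the structural reason the final answer contains no explicit $\Phi$-contribution, and checking it is what organizes the whole calculation.

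The bulk of the work, and the main obstacle, is the bookkeeping: after substituting the Taylor coefficients and collecting terms by $e^{\phi_i(0)x}x^j$, one must verify that the $x^2$- and $x$-coefficients coming from the $\Phi$-branch exactly cancel (for the first moment) or combine with the $\phi_i$-contributions (for the second moment) to produce the stated $B_i^{\uparrow}$ and $C_i^{\uparrow}$. I would carry this out symbolically in two separate ledgers ($k=1$ and $k=2$), reducing every coefficient to expressions involving $\psi^{(j)}(0+)$, $\psi^{(j)}(\phi_i(0))$ and $\phi_i(0)$ only. Finally, the stated $x=0$ identities follow by setting $x=0$ in the formulas above and invoking the identity $W^{(0)}(0)=\tfrac{1}{p}\mathds{1}_{\{\sigma^2=0\}}$, which combined with $W^{(0)}(0) = 1/\psi'(0+) + \sum_{i=1}^n 1/\psi'(\phi_i(0))$ yields $\sum_{i=1}^n 1/\psi'(\phi_i(0)) = \tfrac{1}{p}\mathds{1}_{\{\sigma^2=0\}} - 1/\psi'(0+)$, giving the asserted closed form for the denominator at $x=0$ and a number of simplifications in the numerator.
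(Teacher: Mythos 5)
Your overall route is the paper's: start from Lemma \ref{Theorem_FirstMoments}(ii), differentiate the explicit phase-type scale function \eqref{eq_Ex_PHLevy_qScale} in $q$, exploit $\Phi(0)=0$ in the profitable regime, and organize terms by $e^{\phi_i(0)x}x^j$. The Taylor coefficients of $\Phi$ you list agree with \eqref{Lemma_derivative_inverse}, \eqref{derivativeinverse-diff1}, \eqref{derivativeinverse-diff2}, and your use of $W^{(0)}(0)=\tfrac{1}{p}\mathds{1}_{\{\sigma^2=0\}}$ is exactly the paper's identity \eqref{eq_appendixA}.

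There is, however, a genuine gap in how you handle the $\Phi$-branch. You assert that its pure-polynomial contributions ``must cancel'' against the factor $-\psi'(0+)$ coming from the denominator. What actually happens is different: in the numerator of \eqref{eq_Theorem_FirstMomentprof} the $x$-linear $\Phi$-branch contributions of $\psi'(0+)\lim_{q\downarrow0}\partial_q W^{(q)}(x)$ and $-\int_0^x W^{(0)}$ do cancel automatically, but one is left with the residual constant
\begin{equation*}
-\frac{\psi''(0+)}{2\psi'(0+)^2}+\sum_{i=1}^n\frac{1}{\psi'(\phi_i(0))\,\phi_i(0)},
\end{equation*}
and this vanishes only because of the partial-fraction identity
\begin{equation*}
\frac{1}{\psi(\theta)-q}=\sum_{i=0}^n\frac{1}{\psi'(\phi_i(q))\,(\theta-\phi_i(q))},
\end{equation*}
evaluated at $\theta=0$ as $q\downarrow 0$ (which gives \eqref{eq_appendix21}); differentiating it once in $q$ gives the analogous identity \eqref{eq_appendix22} needed for the $k=2$ bookkeeping. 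These two identities are indispensable: without them, your two ledgers will retain non-decaying polynomial terms that do not appear in the claimed formulas, and the manipulation will not close. The $W^{(0)}(0)$-identities you invoke (the paper's \eqref{eq_appendixA}--\eqref{eq_appendixC}) simplify the $x=0$ evaluations and help re-express sums $\sum_i\psi''(\phi_i(0))/\psi'(\phi_i(0))^3$ and similar, but they are not a substitute for the partial-fraction relations controlling $\sum_i 1/(\psi'(\phi_i(0))\phi_i(0))$ and $\sum_i 1/(\psi'(\phi_i(0))^2\phi_i(0)^2)$. So the plan is right, but you should add the partial-fraction decomposition of $1/(\psi(\theta)-q)$ and its derivative as a named ingredient before claiming the cancellations.
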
\medskip

We emphasize at this point that the obtained formulas in Propositions  \ref{Proposition_Unprofitable_PH} and \ref{Proposition_Profitable_PH} may  seem complicated at first sight, but they can be evaluated rather easily in concrete cases as long as $n$ is not too big: As the Laplace exponent $\psi$ is given explicitly in \eqref{eq_Ex_PHLevy_LaplaceExponent}, its derivatives and roots can be determined by standard procedures.
In Figure \ref{fig_PhaseTypeComparisonPlotsMean} we provide evaluations of the obtained formulas for Cramér-Lundberg processes with phase-type distributed claims with and without perturbation. The considered claim size distributions in this figure are
\begin{align*}
	S_i^X \sim \PH_2\Big(\left(\begin{smallmatrix} 1 \\ 0 \end{smallmatrix}\right), \left(\begin{smallmatrix} -1 & 0.05 \\ 0.1 & -0.1 \end{smallmatrix}\right)\Big) ,  
	S_i^Y \sim \PH_3\Big(\left(\begin{smallmatrix} 0.03 \\ 0.57 \\ 0.4 \end{smallmatrix}\right),\left( \begin{smallmatrix} -0.07 & 0 & 0 \\ 0 & -2 & 0 \\ 0 & 0 & -0.5 \end{smallmatrix}\right) \Big), 
	S_i^Z &\sim \PH_1\left( 1 ,  \tfrac{1}{1.57895}\right),
\end{align*}
such that $S_i^Y$ has a hyperexponential distribution, while $S_i^Z$ has an exponential distribution. The parameters are chosen such that $\mathbb{E}[S_i^X] = \mathbb{E}[S_i^Z] \approx \mathbb{E}[S_i^Y]\approx 1.5=:\mu$, while  $\operatorname{var}(S_i^X)\approx \operatorname{var}(S_i^Y) > \operatorname{var}(S_i^Z)$. The ruin times of the resulting (perturbed) Cramér-Lundberg processes $(X_t)_{t\geq 0}, (Y_t)_{t\geq 0}$, and $(Z_t)_{t\geq 0}$ are then compared with the ruin time of a Brownian motion $(B_t)_{t\geq 0}$ with drift $p - \lambda\mu$ such that $\EE_0[B_1]\approx \EE_0[X_1], \EE_0[Y_1], \EE_0[Z_1]$.

\begin{figure}[!htb]
	\centering
	\includegraphics[scale=0.5]{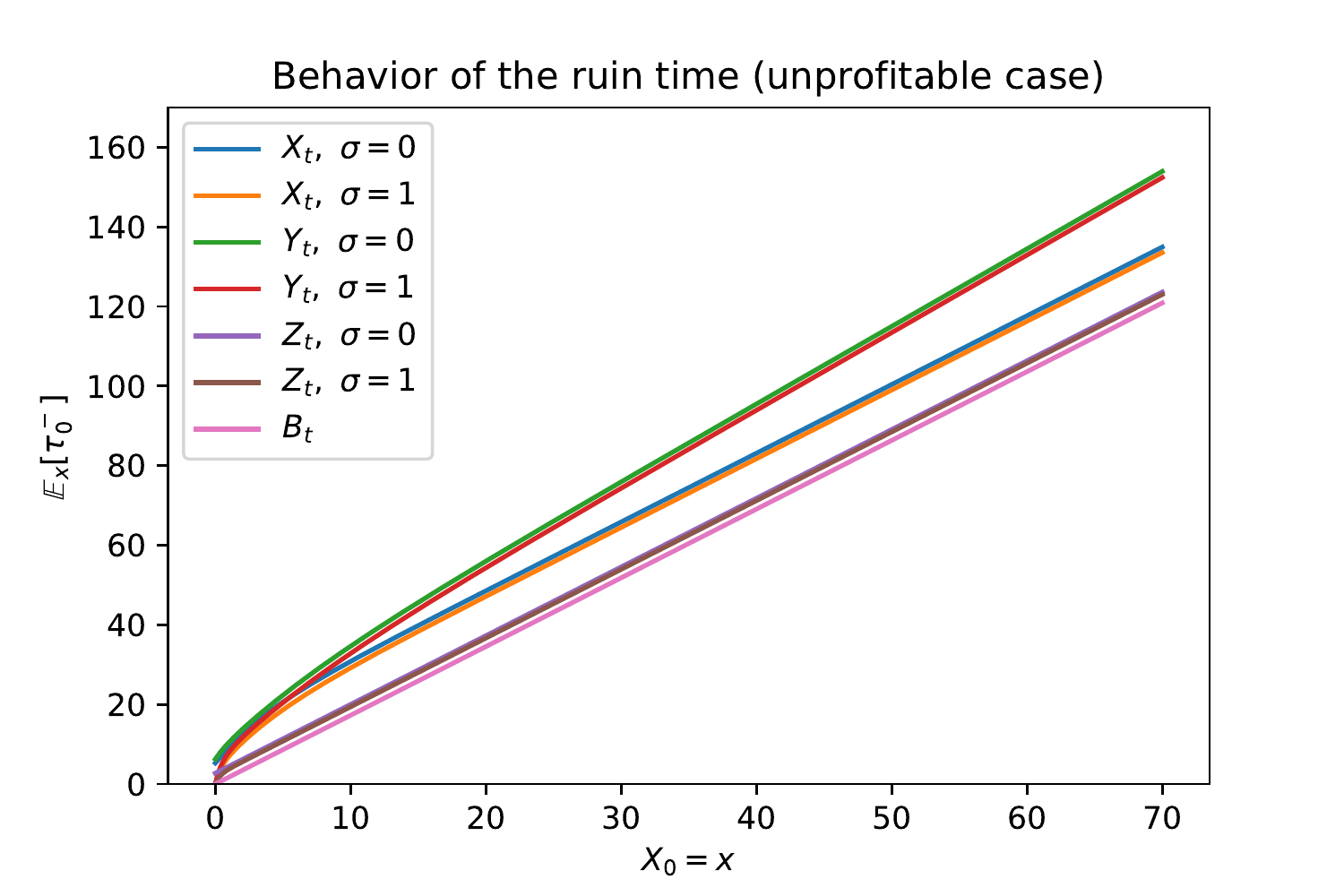}
	\includegraphics[scale=0.5]{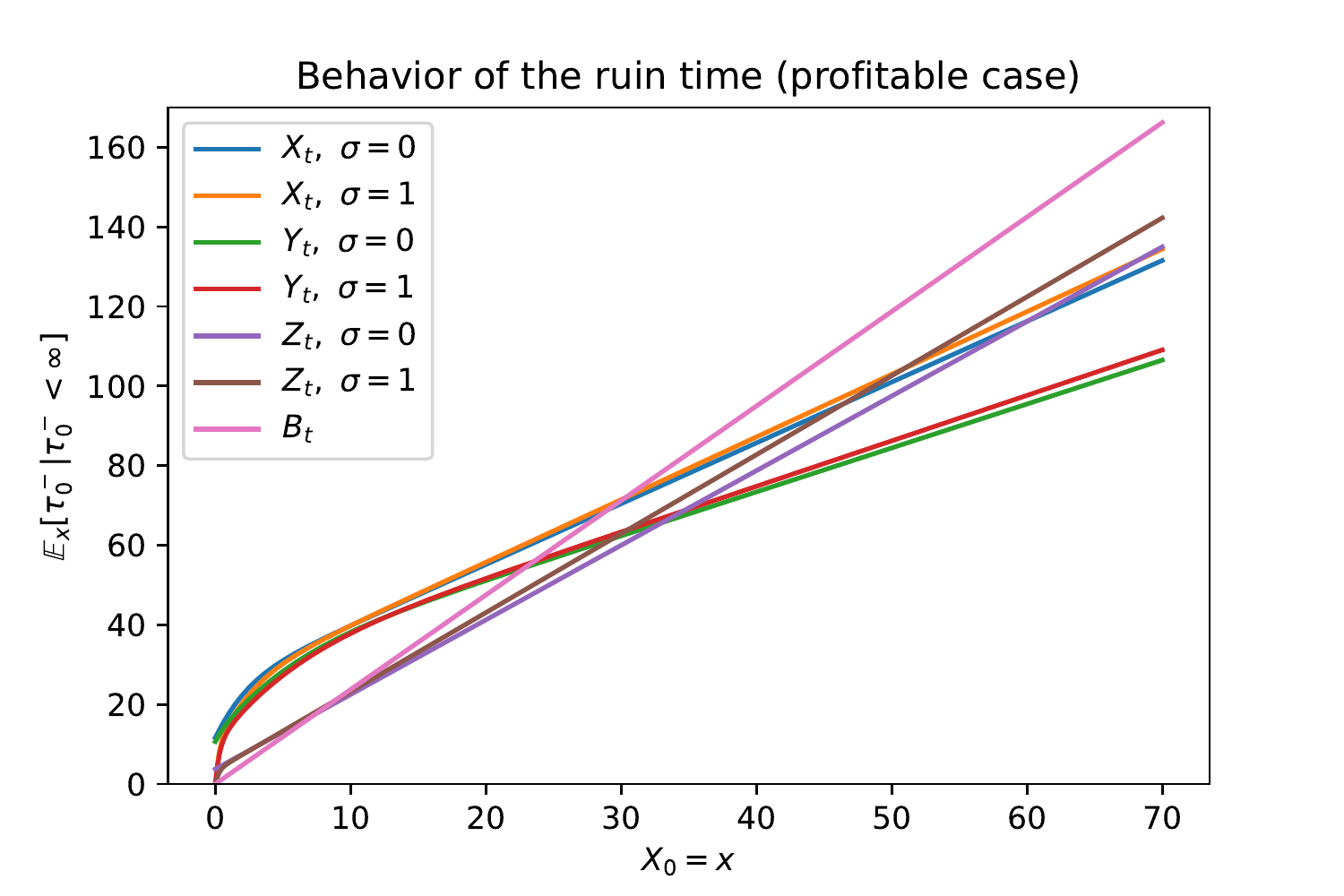}\\
	\includegraphics[scale=0.5]{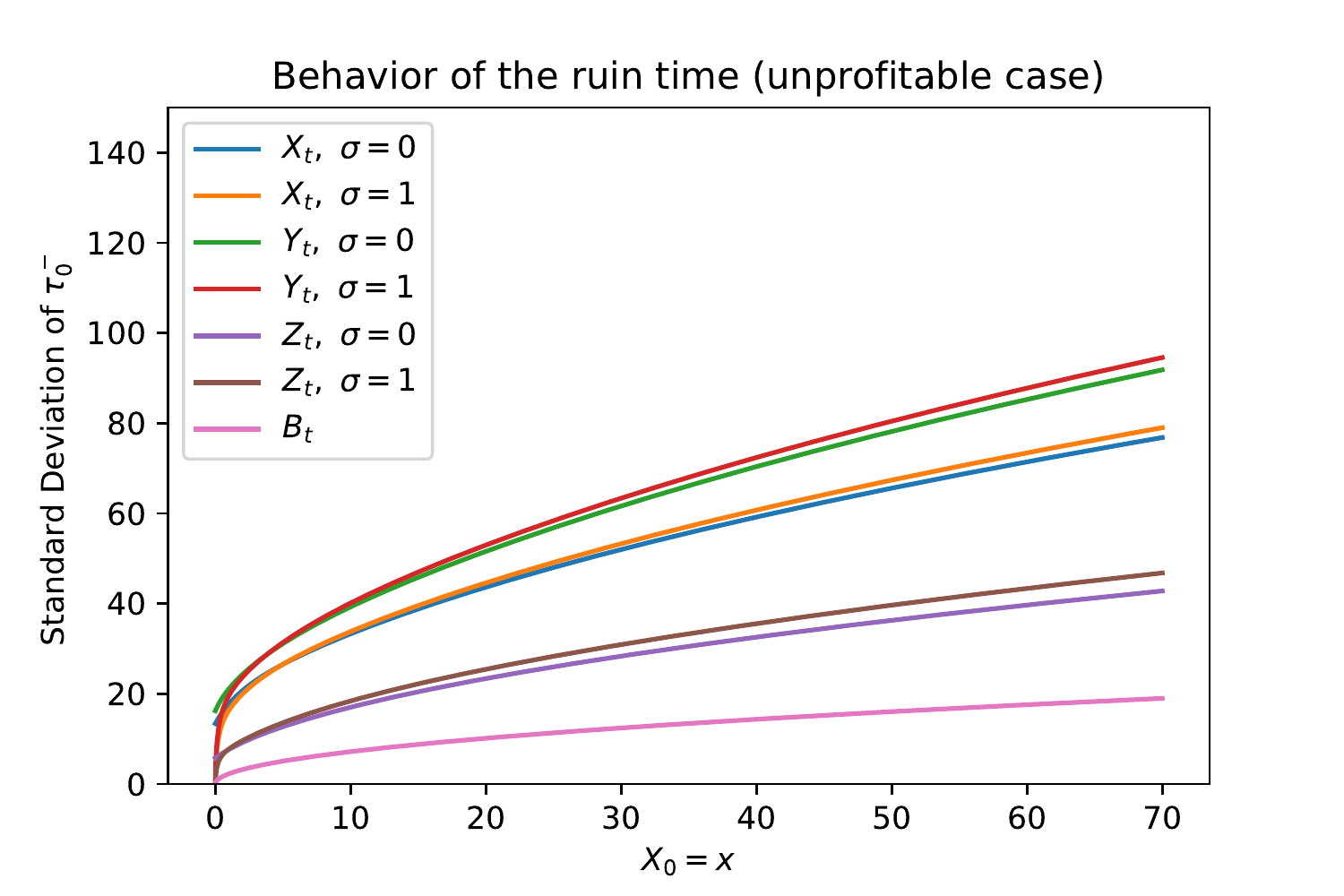}
	\includegraphics[scale=0.5]{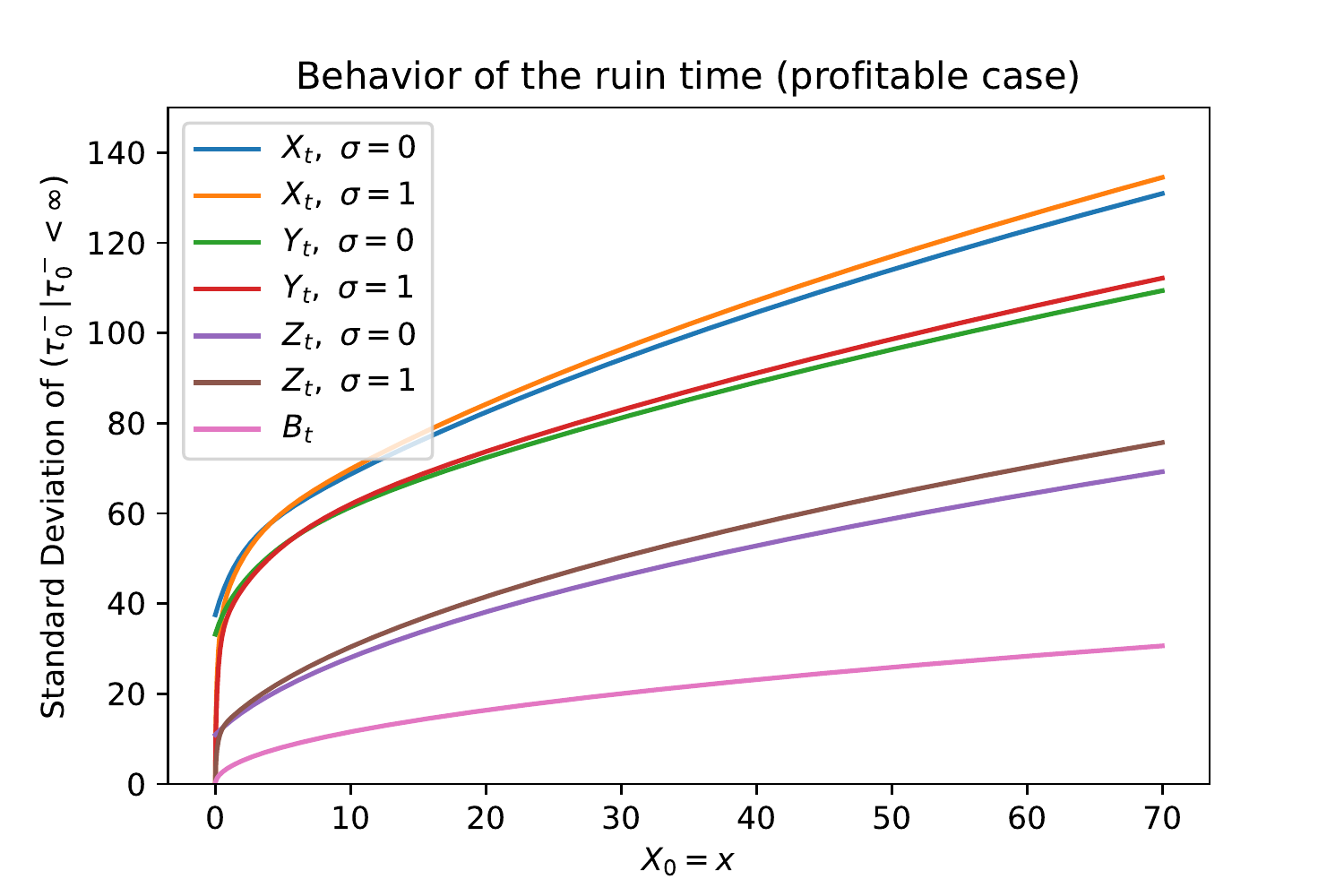}
	\caption{Expected ruin time (given it is finite) on top, and standard deviation of the ruin time (given it is finite) on bottom, in the perturbed Cramér-Lundberg model with different phase-type distributed claims and different choices of $\sigma^2\geq 0$. The parameters chosen are
		$p=1=\lambda$, such that $\psi'(0)\approx -0.5 <0$ (left), and $p=2$, $\lambda=1$, such that $\psi'(0)\approx 0.5 >0$ (right). 
	}
	\label{fig_PhaseTypeComparisonPlotsMean}
\end{figure}

It is obvious from the figure that the behavior in the profitable and the unprofitable regime is crucially different. In the unprofitable setting, in agreement with Theorem \ref{Theorem_Asymptotics_General_Moments}, the principal behavior of the mean ruin time for $x\gg 0$ is solely determined by the expectation of the process. The existence of a perturbation yields a smaller expected ruin time for all $x$. In contrast, in the profitable setting, for large $x$ the perturbed processes have a higher first moment of the ruin time than the unperturbed ones. Further, the pure Brownian motion has the highest expected ruin time but the lowest standard deviation of the ruin time. For small $x$ we observe the same behavior as before: Presence of a perturbation leads to a smaller expectation of the ruin time as the perturbed processes may creep below zero and therefore face ruin very fast.

\begin{remark}
Note that, by the method explained in \cite[Sec. 5.4]{kuznetsov2011}, it is possible to show that a representation for the $q$-scale function as in \eqref{eq_Ex_PHLevy_qScale} exists for any L\'evy process that has a rational transform, i.e., any L\'evy process whose Laplace exponent is a rational function. In the case $\psi'(0+)<0$ this has been done in \cite{Egami2014}. This also allows to extend the results stated above in the context of phase-type distributions to this wider class of processes, since our proofs only rely on the specific representation of the scale function. 
\end{remark}

\section{Exponentially distributed claims}\label{S2b}
 \setcounter{equation}{0}

We again consider the perturbed Cramér-Lundberg model $X$ of the form \eqref{eq-perturbedCLmodel}, where in this section the claim sizes $\{S_i, i\in\NN \}$ are supposed to be i.i.d. exponential random variables with parameter $\gamma>0$. \\
Whenever $\sigma^2=0$ we recover the classical Cramér-Lundberg process with exponential claims, a model that has shown to be extremely well-treatable, as, e.g., ruin probabilities or excess of loss distributions at the time of ruin can be determined in closed form, cf. \cite{asmussenalbrecher}. Also the ruin time in this model has been studied before, e.g. in \cite[Chapter 9.3]{Gerber1979}, where the first moment of the ruin time is computed, or more recently in \cite{Drekic2003}, where explicit formulas for arbitrary integer moments and a pdf of the ruin time are derived. However, all those results are only valid if $\sigma^2=0$ and the net-profit condition $\psi'(0+)=p-\tfrac{\lambda}{\gamma}>0$ is fulfilled, i.e. if the model is profitable with $\psi'(0+)>0$.\\
With the methods derived above, we are now able to present explicit formulas for the first and second moment of the ruin time whenever $\psi'(0+)\neq 0$, and for all choices of $\sigma^2\geq 0$. We thus extend the existing results in two directions by considering a perturbed CL model both in a profitable and in an unprofitable scenario. 

As the exponential distribution is the simplest representative of a phase-type distribution, we can apply the results from the last section. First of all note that the Laplace exponent $\psi$ of $X$ in the current setting is  
\begin{equation} \label{eq_Ex_expLE}
	\psi(\theta) = p \theta + \frac{1}{2} \theta^2 \sigma^2 + \lambda \left( \frac{\gamma}{\theta + \gamma} -1 \right) = p \theta + \frac{1}{2} \theta^2 \sigma^2 -   \frac{\theta \lambda }{\theta + \gamma}, \quad \theta \neq -\gamma ,
\end{equation}
with derivatives 
\begin{equation*}
	\psi'(\theta)=p + \theta \sigma^2  - \frac{\lambda \gamma}{(\theta+\gamma)^{2}} , \quad
	\psi''(\theta)= \sigma^2 +  \frac{2\lambda \gamma}{(\theta+\gamma)^{3}}, \quad \text{and} \quad 
	\psi'''(\theta) = - \frac{6 \lambda \gamma}{(\theta+\gamma)^4}. 
\end{equation*}
The $q$-scale function of $X$ is of the form \eqref{eq_Ex_PHLevy_qScale}, where one can see from \eqref{eq_Ex_expLE} that for every $q>0$ the equation $\psi(\theta)=q$ has exactly three real solutions $\phi_2(q)<-\gamma<\phi_1(q)<0< \Phi(q)$, cf. \cite[Ex. 1.3]{kuznetsov2011}. As $\sigma$ tends to zero, $\phi_2(q)\to -\infty$, and in the limiting case $\sigma^2=0$ only two $q$-roots of $\psi$ exist, which are then easily computed to be, cf. \cite{kuznetsov2011},
\begin{align*}\label{eq_Exexprootssigma0}
	\zeta_{1,2}=\frac{1}{2p} \left(\lambda +q - \gamma p \pm \sqrt{(\lambda +q - \gamma p)^2 +4q\gamma p} \right).
\end{align*}
In particular it follows that
\begin{equation}\label{eq_Exexprootssigma0}
\Phi(0)= \begin{cases}\tfrac{\lambda-p\gamma}{p}, & \text{if }\psi'(0+)<0,\\
0, & \text{if }\psi'(0+)>0, \end{cases} \quad \text{and} \quad \phi_1(0)= \begin{cases} 0, & \text{if }\psi'(0+)<0,\\
- \tfrac{p\gamma-\lambda}{p}, & \text{if }\psi'(0+)>0. \end{cases} 
\end{equation}
For $\sigma^2>0$ no explicit representation of $\Phi(q), \phi_1(q)$ and $\phi_2(q)$, $q>0$, is known. However, for $q=0$, it is easily checked that the three roots are given by
$$\zeta_1=0, \quad \text{and} \quad  \zeta_{2,3}=
\frac{1}{2\sigma^2} \left(-\gamma\sigma^2- 2p \pm \sqrt{\left(\gamma\sigma^2- 2p\right)^2 + 8\lambda \sigma^2}\right),$$
where in the case $\psi'(0+)=p-\tfrac{\lambda}{\gamma} <0$ it holds $\phi_2(0)= \zeta_3 < \phi_1(0)=0 < \Phi(0)=\zeta_2$, while for $\psi'(0+)>0$ we have to reorder and obtain $\phi_2(0) =\zeta_3 < \phi_1(0)= \zeta_2 < \Phi(0)=0$.

The following two corollaries are now immediate consequences of Propositions \ref{Proposition_Unprofitable_PH} and \ref{Proposition_Profitable_PH}, respectively, and can be shown by standard algebra. 

 Again we start with the unprofitable setting.

\begin{corollary} \label{Corollary_Example_ExponentialUnprofitable}
	Let $X$ have the Laplace exponent \eqref{eq_Ex_expLE} with $\sigma^2>0$ and assume that $\psi'(0+)=p-\frac{\lambda}{\gamma}<0$. Set $r:=\sqrt{(\gamma \sigma^2 - 2p)^2 + 8\lambda \sigma^2}$. Then for all $x> 0$ it holds
		\begin{align*} 
		\mathbb{E}_x[\tau_0^-] &=\tfrac{\gamma}{\lambda - p\gamma}\cdot x +C_1^\downarrow \Big(1-  e^{- \frac{\gamma\sigma^2+ 2p + r }{2\sigma^2}  x }\Big) , \quad \text{and}\\
		\mathbb{E}_x[(\tau_0^-)^2]
		&= \tfrac{\gamma^2}{(\lambda -p\gamma)^2}\cdot x^2 + \Big( B^\downarrow - \Big(\tfrac{\gamma \sigma^2 - 2p - r}{\gamma \sigma^2 +2p +r}\Big)^2 \tfrac{4}{r(\lambda-p\gamma)} e^{- \frac{\gamma\sigma^2+ 2p + r }{2\sigma^2}  x}\Big)\cdot x  + C_2^\downarrow \Big(1-  e^{- \frac{\gamma\sigma^2+ 2p + r }{2\sigma^2}  x }\Big) ,
	\end{align*} 
	with 
	\begin{align*}
		B^\downarrow &= \tfrac{2\gamma^2}{(\lambda-p\gamma)^2}\left( \tfrac{2\lambda}{\gamma(\lambda-p\gamma)} + \tfrac{\gamma \sigma^2}{(\lambda-p\gamma)} + \tfrac{2\sigma^2}{\gamma \sigma^2 + 2p - r}\right),\\
		C_1^\downarrow &= \tfrac{ \lambda  }{(\lambda - p\gamma)^2}+ \tfrac{ \gamma^2 \sigma^2 }{2(\lambda - p\gamma)^2} + \tfrac{2 \gamma \sigma^2 }{(\lambda-p\gamma) (\gamma \sigma^2 + 2p - r)} ,\\
		C_2^\downarrow 
		 &= - \tfrac{4\lambda}{(\lambda-p\gamma)^3} + \tfrac{3(\sigma^2 \gamma^2 + 2 \lambda)^2}{2(\lambda-p\gamma)^4} + \tfrac{4\gamma^3 \sigma^4 + 8 \lambda \gamma \sigma^2}{ (\lambda - p\gamma)^3 (\gamma \sigma^2 + 2p- r)} +  \tfrac{16 \gamma \sigma^4}{r(\lambda-p\gamma)} \tfrac{(\gamma \sigma^2 - 2p + r) }{(\gamma \sigma^2 + 2p - r)^3}.
	\end{align*}
\end{corollary}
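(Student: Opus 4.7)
The plan is to derive Corollary~\ref{Corollary_Example_ExponentialUnprofitable} by specializing Proposition~\ref{Proposition_Unprofitable_PH} to exponential claims, which constitute the one-dimensional phase-type distribution $\PH_1(1,-\gamma)$. Since the Laplace exponent \eqref{eq_Ex_expLE} and its derivatives up to order three are already in closed form, the substantive work is to identify the $q=0$ roots of $\psi$ outside $\Phi(0)$, evaluate the relevant $\psi$-derivatives at those roots, substitute into the general formulas of Proposition~\ref{Proposition_Unprofitable_PH}, and simplify.

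First I would locate the roots. Writing $\psi(\theta) = \theta\,Q(\theta)/(\theta+\gamma)$ with the quadratic $Q(\theta) = \tfrac{\sigma^2}{2}\theta^2 + (p + \tfrac{\sigma^2\gamma}{2})\theta + (p\gamma-\lambda)$, the equation $\psi(\theta)=0$ has exactly three solutions: $\theta=0$ and the two real, sign-opposite roots of $Q$, which are $\Phi(0)=\zeta_2=(r-\gamma\sigma^2-2p)/(2\sigma^2)>0$ and $\phi_2(0)=\zeta_3=-(\gamma\sigma^2+2p+r)/(2\sigma^2)<0$ with $r=\sqrt{(\gamma\sigma^2-2p)^2+8\lambda\sigma^2}$. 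In the notation of Proposition~\ref{Proposition_Unprofitable_PH} we thus have $n=|\mathcal{R}_0|=2$, $\phi_1(0)=0$, and only the single summand $i=2$ contributes to $\epsilon_1(x)$ and $\epsilon_2(x)$.

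Next I would evaluate the ingredients. Since $Q'(\zeta_2)=r/2$ and $Q'(\zeta_3)=-r/2$, the identity $\psi'(\theta_0)=\theta_0 Q'(\theta_0)/(\theta_0+\gamma)$ valid at roots of $Q$ yields closed forms for $\psi'(\Phi(0))$ and $\psi'(\phi_2(0))$, and one further differentiation supplies $\psi''(\phi_2(0))$. Combined with the explicit values $\psi'(0+)=(p\gamma-\lambda)/\gamma$, $\psi''(0+)=\sigma^2+2\lambda/\gamma^2$ and $\psi'''(0+)=-6\lambda/\gamma^3$ from \eqref{eq_Ex_expLE}, this produces every quantity appearing in Proposition~\ref{Proposition_Unprofitable_PH}. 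Because the single exponential $e^{\phi_2(0)x}=e^{-Kx}$ with $K=(\gamma\sigma^2+2p+r)/(2\sigma^2)$ factors out of $\epsilon_j$, and because $C_j^\downarrow=-\epsilon_j(0)$ whenever $\sigma^2>0$, the combination $C_j^\downarrow+\epsilon_j(x)$ collapses for $j=1$ into $C_1^\downarrow(1-e^{-Kx})$ and for $j=2$ into $C_2^\downarrow(1-e^{-Kx})$ plus a term linear in $x$ proportional to $e^{-Kx}$, thereby producing the structure displayed in the corollary.

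The main obstacle is purely algebraic: matching the closed forms for $C_1^\downarrow$, $B^\downarrow$, $C_2^\downarrow$, and the $xe^{-Kx}$-coefficient requires repeated use of the Vieta identities $\Phi(0)+\phi_2(0)=-(\gamma\sigma^2+2p)/\sigma^2$, $\Phi(0)\phi_2(0)=2(p\gamma-\lambda)/\sigma^2$ and $\Phi(0)-\phi_2(0)=r/\sigma^2$, together with the discriminant identity $r^2-(\gamma\sigma^2+2p)^2=8\sigma^2(\lambda-p\gamma)$ and rationalization by the conjugate $\gamma\sigma^2+2p+r$ when removing $r$ from denominators. I would organize the computation by first expressing each of the three groups of terms in Proposition~\ref{Proposition_Unprofitable_PH} (those polynomial in $x$, those proportional to $e^{-Kx}$, and the $xe^{-Kx}$ part of $\epsilon_2$) as a rational function of $(p,\gamma,\lambda,\sigma^2,r)$, then reducing each to the form stated in the corollary. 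Since every transformation is elementary, no conceptual ingredient beyond Proposition~\ref{Proposition_Unprofitable_PH} and these standard identities is required.
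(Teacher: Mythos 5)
Your proposal is correct and follows exactly the route the paper takes: specialize Proposition~\ref{Proposition_Unprofitable_PH} to $\PH_1(1,-\gamma)$, identify the three real zeros of $\psi$ at $q=0$ (so $n=2$, $\phi_1(0)=0$, $\phi_2(0)=\zeta_3$, $\Phi(0)=\zeta_2$), and reduce the resulting expressions by elementary algebra. The paper itself labels this step ``standard algebra'' after inserting the roots \eqref{eq_Exexprootssigma0}; your added bookkeeping via the factorization $\psi(\theta)=\theta Q(\theta)/(\theta+\gamma)$, the identity $\psi'(\theta_0)=\theta_0 Q'(\theta_0)/(\theta_0+\gamma)$ at roots of $Q$, and Vieta's relations is a faithful organization of exactly that computation.
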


In the profitable setting, the representations of the first two moments are as follows. 
\begin{corollary} \label{Corollary_Example_ExponentialProfitable}
		Let $X$ have the Laplace exponent \eqref{eq_Ex_expLE} with $\sigma^2>0$ and assume that $\psi'(0+)=p-\frac{\lambda}{\gamma}>0$. Set $r:=\sqrt{(\gamma \sigma^2 - 2p)^2 + 8\lambda \sigma^2}$. Then for all $x> 0$ it holds
		\begin{align*} 
			\mathbb{E}_x[\tau_0^-|\tau_0^-<\infty]
			&= \frac{x\cdot\tfrac{2}{r} \cdot  \tfrac{\gamma \sigma^2 - 2p + r}{ \gamma \sigma^2 + 2p - r } + A^\uparrow_+}{1+ \epsilon^{\operatorname{Exp}}(x)} + \frac{x\cdot \tfrac{2}{r}\cdot  \tfrac{-(\gamma \sigma^2 - 2p - r)}{ \gamma \sigma^2 +2p +r } + A^\uparrow_-}{1+ \epsilon^{\operatorname{Exp}}(x)^{-1}} , \quad \text{and}\\
			\mathbb{E}_x[(\tau_0^-)^2|\tau_0^-<\infty] 
			&= \frac{x^2\cdot \tfrac{4}{r^2}  \Big(\tfrac{\gamma \sigma^2 - 2p + r}{\gamma \sigma^2 + 2p - r }\Big)^2 + x\cdot B_+^\uparrow + C_+^\uparrow }{1+\epsilon^{\operatorname{Exp}}(x)}  + \frac{x^2\cdot \tfrac{4}{r^2} \Big( \tfrac{\gamma \sigma^2 - 2p - r}{ \gamma \sigma^2 +2p +r }\Big)^2+ x\cdot B_-^\uparrow + C_-^\uparrow}{1+\epsilon^{\operatorname{Exp}}(x)^{-1}},
		\end{align*}
	with
		\begin{align*}
		\epsilon^{\operatorname{Exp}}(x) 
		&:=  - \tfrac{\gamma \sigma^2 - 2p - r}{\gamma \sigma^2 - 2p +r}\cdot \tfrac{  \gamma \sigma^2 + 2p - r}{\gamma \sigma^2 +2p +r} \cdot e^{-\frac{r}{\sigma^2} x},
	\end{align*}
and 
	\begin{align*}
		A^\uparrow_{\pm}
		&:= -\tfrac{ \lambda}{(p\gamma-\lambda)^2} - \tfrac{\gamma^2 \sigma^2}{2(p\gamma-\lambda)^2} - \tfrac{2\gamma \sigma^2}{(p\gamma - \lambda)(\gamma \sigma^2+ 2p \mp r)} +\tfrac{4\sigma^2}{r^2}   \Big(\tfrac{\gamma \sigma^2 - 2p \pm r}{ \gamma \sigma^2 + 2p \mp r }\Big)^2 \left[1 + \tfrac{16\lambda \gamma \sigma^4}{(\gamma\sigma^2 - 2p \pm r)^3}\right] , \\
	 B^\uparrow_{\pm}
	 &:=\mp  \tfrac{2}{r} \cdot\tfrac{\gamma \sigma^2 - 2p \pm r}{\gamma \sigma^2 + 2p \mp r }  \left[\tfrac{2 \lambda + \gamma^2 \sigma^2 }{(p\gamma - \lambda)^2} + \tfrac{4\gamma \sigma^2}{(p\gamma - \lambda)(\gamma \sigma^2 + 2p \mp r)} \right]  \pm \tfrac{24\sigma^2}{r^3} \Big( \tfrac{\gamma \sigma^2 - 2p \pm r}{ \gamma \sigma^2 + 2p \mp r }\Big)^3  \left[1 + \tfrac{16 \lambda \gamma \sigma^4}{(\gamma\sigma^2 - 2p \pm r)^3}\right] ,\\
 C^\uparrow_{\pm}
 & :=- \tfrac{2\lambda}{(p\gamma - \lambda)^3} - \tfrac{(2\lambda + \gamma^2\sigma^2)^2}{2(p\gamma - \lambda)^4} \pm \tfrac{96 \gamma\sigma^6}{r^3} \cdot  \tfrac{ \gamma\sigma^2   -2p \mp r}{\left( \gamma \sigma^2 + 2p \mp r \right)^3} \mp \tfrac{16\gamma \sigma^4}{r (p\gamma-\lambda)} \tfrac{ \gamma\sigma^2   -2p \pm r}{\left( \gamma \sigma^2 + 2p \mp r \right)^3}\\
 &\qquad \pm \tfrac{2\sigma^2}{r} \cdot  \tfrac{\gamma \sigma^2 - 2p \pm r}{ \gamma \sigma^2 + 2p \mp r } \left[1 + \tfrac{16\lambda \gamma \sigma^4}{(\gamma \sigma^2 - 2p \pm r)^3} \right]B^\uparrow_{\pm} .
\end{align*} 
\end{corollary}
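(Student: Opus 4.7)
The plan is to obtain the corollary as a direct specialization of Proposition \ref{Proposition_Profitable_PH}. Since an exponential distribution with parameter $\gamma$ is precisely $\PH_1(1, -\gamma)$, all hypotheses of that proposition are satisfied, and in the profitable case with $\sigma^2>0$ the analysis preceding the corollary shows that $\psi(\theta)=0$ admits exactly the roots $\Phi(0+)=0$ and the two real roots $\phi_1(0+), \phi_2(0+)<0$ with
\[
\phi_1(0+) = \tfrac{1}{2\sigma^2}\bigl(-\gamma\sigma^2-2p+r\bigr), \qquad
\phi_2(0+) = \tfrac{1}{2\sigma^2}\bigl(-\gamma\sigma^2-2p-r\bigr),
\]
where $r=\sqrt{(\gamma\sigma^2-2p)^2+8\lambda\sigma^2}$. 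Thus $n=2$ in the sums appearing in Proposition \ref{Proposition_Profitable_PH}, and what remains is purely algebraic evaluation.

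First I would compute $\psi'(0+)$, $\psi''(0+)$, $\psi'''(0+)$ as well as $\psi'(\phi_i(0+))$, $\psi''(\phi_i(0+))$, $\psi'''(\phi_i(0+))$ for $i=1,2$ from the closed forms for $\psi',\psi'',\psi'''$ given right after \eqref{eq_Ex_expLE}. The key simplifications come from the identities $\phi_1(0+)+\phi_2(0+)=-(\gamma\sigma^2+2p)/\sigma^2$, $\phi_1(0+)\phi_2(0+)=-2\lambda/\sigma^2+\ldots$, $(\phi_i(0+)+\gamma)^2=\lambda\gamma/(p+\phi_i(0+)\sigma^2)$ (which is just $\psi'(\phi_i(0+))+\text{cancellation}$), and the basic fact $\phi_1(0+)-\phi_2(0+)=r/\sigma^2$. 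These allow each $\psi^{(j)}(\phi_i(0+))$ to be written in terms of $r$, $\gamma\sigma^2\pm 2p\pm r$, and $p\gamma-\lambda$, matching the factors appearing in $A_\pm^\uparrow$, $B_\pm^\uparrow$ and $C_\pm^\uparrow$.

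Next I would substitute everything into the formulas of Proposition \ref{Proposition_Profitable_PH}. For the first moment this yields an expression of the form
\[
\mathbb{E}_x[\tau_0^-\mid \tau_0^-<\infty] = \frac{(\alpha_1 x + \beta_1) e^{\phi_1(0+)x} + (\alpha_2 x + \beta_2) e^{\phi_2(0+)x}}{\gamma_1 e^{\phi_1(0+)x} + \gamma_2 e^{\phi_2(0+)x}},
\]
and analogously for the second moment but with an extra $x^2$ contribution. To obtain the stated form I would split this single ratio into the two partial fractions by dividing the first summand through by $\gamma_1 e^{\phi_1(0+)x}$ and the second by $\gamma_2 e^{\phi_2(0+)x}$, thereby introducing the common factor $e^{(\phi_2(0+)-\phi_1(0+))x}=e^{-rx/\sigma^2}$ that defines $\epsilon^{\operatorname{Exp}}(x)$. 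After identifying $\gamma_2/\gamma_1=-\tfrac{\gamma\sigma^2-2p-r}{\gamma\sigma^2-2p+r}\cdot\tfrac{\gamma\sigma^2+2p-r}{\gamma\sigma^2+2p+r}$ (which is a direct computation from the explicit form of $\psi'(\phi_i(0+))$), the two denominators take the announced form $1+\epsilon^{\operatorname{Exp}}(x)$ and $1+\epsilon^{\operatorname{Exp}}(x)^{-1}$, while the remaining numerator coefficients collapse exactly into $A_\pm^\uparrow, B_\pm^\uparrow, C_\pm^\uparrow$.

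The main obstacle is not a conceptual one but the bookkeeping in the algebraic simplification: keeping track of the many surd-involving expressions like $\gamma\sigma^2\pm 2p\pm r$ and reducing combinations such as $\psi''(\phi_i(0+))/\psi'(\phi_i(0+))^k$ and the cross-terms in $C_i^\uparrow$ to the compact form in the statement. I would do this by first tabulating $\psi^{(j)}(\phi_i(0+))$ in terms of $r$ and the above linear factors, then evaluating $B_i^\uparrow, C_i^\uparrow$ from Proposition \ref{Proposition_Profitable_PH} one term at a time, and finally verifying the split by substituting $x=0$ and comparing with the boundary formulas $\mathbb{E}_0[\tau_0^-\mid\tau_0^-<\infty]$ and $\mathbb{E}_0[(\tau_0^-)^2\mid\tau_0^-<\infty]$ from that proposition (restricted to the case $\sigma^2>0$, where the indicators vanish, yielding zero).
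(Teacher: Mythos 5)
Your proposal takes exactly the route the paper intends: the paper gives no separate proof of this corollary and simply states that it is an ``immediate consequence of Proposition \ref{Proposition_Profitable_PH} \ldots by standard algebra,'' i.e.\ specialize that proposition to $n=2$ with the roots $\phi_{1,2}(0+)=\tfrac{1}{2\sigma^2}(-\gamma\sigma^2-2p\pm r)$, evaluate the derivatives of $\psi$ at these roots, substitute, and regroup the single ratio of exponentials into the two partial fractions that produce the $1+\epsilon^{\operatorname{Exp}}(x)$ and $1+\epsilon^{\operatorname{Exp}}(x)^{-1}$ denominators. The structure, the identification of $\epsilon^{\operatorname{Exp}}$ as $\tfrac{\psi'(\phi_1(0+))}{\psi'(\phi_2(0+))}e^{-rx/\sigma^2}$, and the sanity check at $x=0$ are all sound. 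One small caution: the auxiliary identity you quote, $(\phi_i(0+)+\gamma)^2=\lambda\gamma/(p+\phi_i(0+)\sigma^2)$, is not correct as written; what actually follows from $\psi(\phi_i(0+))=0$ (and $\phi_i(0+)\neq 0$) is $\phi_i(0+)+\gamma=\lambda/\bigl(p+\tfrac12\phi_i(0+)\sigma^2\bigr)$, so $(\phi_i(0+)+\gamma)^2=\lambda^2/\bigl(p+\tfrac12\phi_i(0+)\sigma^2\bigr)^2$. Using your incorrect version would amount to asserting $\psi'(\phi_i(0+))=0$, i.e.\ a double root, which contradicts the standing assumption of simple roots. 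With this correction, the bookkeeping you describe goes through as stated.
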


\begin{remark} Observe that, due to
	$-\frac{r}{\sigma^2}= -\sqrt{\left(\gamma - \frac{2p}{\sigma^2}\right)^2  + \frac{8\lambda}{\sigma^2}}<0,$
	the function $\epsilon^{\operatorname{Exp}}(x)$ in Corollary \ref{Corollary_Example_ExponentialProfitable} above decreases exponentially as $x$ grows. Thus for large $x$ the behavior of the first and second moment of the ruin time is dominated by the nominator of the first summand in the given representations, respectively. In particular, mean and variance of the ruin time are approximately affine linear whenever $x$ is large.
\end{remark}

Inserting the roots \eqref{eq_Exexprootssigma0} in Propositions \ref{Proposition_Unprofitable_PH} and \ref{Proposition_Profitable_PH}, we can also easily derive the first two moments of the ruin time in the classical Cramér-Lundberg model. Note that in the profitable case $\psi'(0+)>0$ these formulas have already been obtained in \cite{Drekic2003}.

\begin{corollary} \label{Corollary_Example_ExponentialUnperturbed}
	Let $(X_t)_{t\geq 0}$ have the Laplace exponent \eqref{eq_Ex_expLE} with $\sigma^2=0$. Then 
	$$\mathbb{E}_x[\tau_0^-|\tau_0^-<\infty]= \begin{cases}
		\frac{\gamma x +1}{\lambda - p \gamma }, & \psi'(0+)=p-\frac{\lambda}{\gamma}<0,\\
		\frac{\frac{\lambda}{p}x + 1}{p\gamma-\lambda},& \psi'(0+)=p-\frac{\lambda}{\gamma}>0,
	\end{cases} \quad x\geq 0,$$
while for $\psi'(0+)\neq 0$ we may summarize to  
$$\var_x(\tau_0^-|\tau_0^-<\infty) = \frac{2\lambda \gamma x}{|p\gamma - \lambda|^3} + \frac{p \gamma + \lambda}{|p \gamma - \lambda|^3}, \quad x\geq 0.$$
	\end{corollary}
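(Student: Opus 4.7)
The plan is to apply Propositions \ref{Proposition_Unprofitable_PH} and \ref{Proposition_Profitable_PH} to the perturbed Cramér-Lundberg model with exponential claims of parameter $\gamma$ in the degenerate regime $\sigma^2=0$. The key structural simplification is that, by the discussion around \eqref{eq_Exexprootssigma0}, for $\sigma^2=0$ the characteristic equation $\psi(\theta)=0$ has only two solutions, so $n=1$. In the unprofitable case this means the sums $\epsilon_1(x)$ and $\epsilon_2(x)$ of Proposition \ref{Proposition_Unprofitable_PH} are empty (the index runs from $i=2$ to $1$), and in the profitable case the sums over $i$ in Proposition \ref{Proposition_Profitable_PH} reduce to a single term, so that the common exponential factor $e^{\phi_1(0+)x}$ cancels between the two outer sums. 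In both cases one therefore expects an affine expression in $x$.

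First I would collect the explicit inputs. From \eqref{eq_Ex_expLE} one has $\psi'(0+)=p-\lambda/\gamma$, $\psi''(0+)=2\lambda/\gamma^2$, $\psi'''(0+)=-6\lambda/\gamma^3$, and from \eqref{eq_Exexprootssigma0}, in the unprofitable regime $\Phi(0)=(\lambda-p\gamma)/p$, while in the profitable regime $\phi_1(0+)=-(p\gamma-\lambda)/p$. Using $\phi_1(0+)+\gamma=\lambda/p$, one immediately obtains
\[
\psi'(\phi_1(0+))=-\tfrac{p(p\gamma-\lambda)}{\lambda}, \qquad \psi''(\phi_1(0+))=\tfrac{2p^3}{\lambda^2},\qquad \psi'''(\phi_1(0+))=-\tfrac{6p^4}{\lambda^3},
\]
and analogous evaluations of $\psi'(\Phi(0))$ in the unprofitable case. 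With these in hand, the first moment formulas are a direct substitution: in the unprofitable regime $\mathbb{E}_x[\tau_0^-]=-x/\psi'(0+)+C_1^\downarrow$ with $C_1^\downarrow=1/(\Phi(0)p)=1/(\lambda-p\gamma)$, giving $(\gamma x+1)/(\lambda-p\gamma)$; in the profitable regime the cancellation of $e^{\phi_1(0+)x}$ yields an affine expression whose leading coefficient is $-1/\psi'(\phi_1(0+))=\lambda/(p(p\gamma-\lambda))$ and whose constant term is obtained by combining the listed logarithmic derivatives of $\psi$ at $0$ and at $\phi_1(0+)$; after a short simplification this equals $1/(p\gamma-\lambda)$, matching the stated form.

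For the second moment I would proceed in the same way, substituting the above quantities into $C_2^\downarrow$ and the terms $B_1^\uparrow, C_1^\uparrow$ from Proposition \ref{Proposition_Profitable_PH}. This produces, in each regime, a polynomial of degree $2$ in $x$. The variance is then obtained as $\mathbb{E}_x[(\tau_0^-)^2\mid\tau_0^-<\infty]-(\mathbb{E}_x[\tau_0^-\mid\tau_0^-<\infty])^2$; the quadratic term in $x^2$ must cancel (consistent with Theorem \ref{Theorem_Asymptotics_General_Moments}, which forces the variance to be $o(x^2)$), leaving an affine function of $x$. Because the sign of $\psi'(0+)$ differs between the two regimes but enters everywhere through even powers, the resulting expressions for $\var_x$ coincide after replacing $p\gamma-\lambda$ by its absolute value, giving the uniform formula $(2\lambda\gamma x + p\gamma+\lambda)/|p\gamma-\lambda|^3$.

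The main obstacle is purely computational: verifying the cancellation of the $x^2$-term in the variance and tracking the many fractions in $p,\gamma,\lambda$ through the subtraction $\mathbb{E}_x[(\tau_0^-)^2\mid\tau_0^-<\infty]-(\mathbb{E}_x[\tau_0^-\mid\tau_0^-<\infty])^2$ without sign errors. A convenient bookkeeping device is to factor $\psi'(\phi_1(0+))=-p(p\gamma-\lambda)/\lambda$ and $\psi'(0+)=(p\gamma-\lambda)/\gamma$ symmetrically, so that the common factor $(p\gamma-\lambda)^{-3}$ appears naturally in both regimes; once this is done, the remainder of the verification is elementary algebra.
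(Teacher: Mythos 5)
Your approach is exactly the paper's: Corollary \ref{Corollary_Example_ExponentialUnperturbed} is obtained by inserting the roots \eqref{eq_Exexprootssigma0} into Propositions \ref{Proposition_Unprofitable_PH} and \ref{Proposition_Profitable_PH} with $n=1$ (so that the $\epsilon_j$-sums are empty, respectively the outer sums collapse to a single term and $e^{\phi_1(0+)x}$ cancels), followed by standard algebra --- which is all the paper itself offers as justification. One small correction to your intermediate values: since $\psi''(\theta)=2\lambda\gamma/(\theta+\gamma)^3$ and $\psi'''(\theta)=-6\lambda\gamma/(\theta+\gamma)^4$ for $\sigma^2=0$, using $\phi_1(0+)+\gamma=\lambda/p$ gives $\psi''(\phi_1(0+))=2\gamma p^3/\lambda^2$ and $\psi'''(\phi_1(0+))=-6\gamma p^4/\lambda^3$ (you dropped the factor $\gamma$); with the corrected values the constant term of the first moment indeed simplifies to $\tfrac{2\gamma p-\lambda-\gamma p}{(p\gamma-\lambda)^2}=\tfrac{1}{p\gamma-\lambda}$ as you claim, and the rest of your outline goes through.
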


Comparing the obtained formulas in Corollaries \ref{Corollary_Example_ExponentialUnprofitable}, \ref{Corollary_Example_ExponentialProfitable} for $\sigma^2>0$ with the results in Corollary \ref{Corollary_Example_ExponentialUnperturbed}, we immediately see that the additional Brownian motion in the perturbed Cram\'er-Lundberg model has a big impact on the ruin time. For small $x$ this is intuitively clear. For large $x$, we note that for $\psi'(0+)<0$ the $\epsilon^{\operatorname{Exp}}_{1,2}$-terms in Corollary \ref{Corollary_Example_ExponentialUnprofitable} vanish and the influence of $\sigma^2$ can only be seen in the appearing constants, while the ascent in $x$ of the expected ruin time is untouched by $\sigma^2$. This behavior coincides with the asymptotics shown in Theorem \ref{Theorem_Asymptotics_General_Moments}. 
In the case $\psi'(0+)>0$ treated in Corollary \ref{Corollary_Example_ExponentialProfitable} also the ascent in $x$ of the expected ruin time does depend on $\sigma^2$ as we already saw in Figure \ref{fig_PhaseTypeComparisonPlotsMean}.

\color{black}

 %%%%%%%%%%%%%%%%%%%%%%%%%%%%%%%%%%%%%%%%%%%%%%%%%%%%%%%%%%
 \section{Proofs}
  \label{S3} \setcounter{equation}{0}
 %%%%%%%%%%%%%%%%%%%%%%%%%%%%%%%%%%%%%%%%%%%%%%%%%%%%%%%%%%
 
 \subsection{Proofs for the results in Section \ref{S1b}}\label{S3a}
 
 \subsubsection*{Proof of Proposition \ref{Proposition_formulae_to_work_with}}
 We use \cite[Lemma 2.1]{BehmeStrietzel2021} for $\kappa=k\in\NN$, in which case the fractional derivative equals a classical derivative. More specifically, we obtain  
 	\[\mathbb{E}_x \left[(\tau_0^-)^k\big|\tau_0^- <\infty \right] = \frac{(-1)^k}{ \mathbb{P}_x(\tau_0^- <\infty)}\cdot \lim_{q\to 0}\partial^k_q \left(Z^{(q)}(x) - \eta(q) \cdot W^{(q)}(x)\right),\]
 	with the integrated scale function $Z^{(q)}(x) = 1+ q \int_0^x W^{(q)}(y) \diff y$. By induction and using the product rule of differentiation  one can show that
 	\begin{align*}
 		\partial^k_q Z^{(q)}(x) &= k \cdot \int_0^x \partial_q^{k-1} W^{(q)}(y)\diff y + q\cdot  \int_0^x \partial_q^{k} W^{(q)}(y)\diff y.
 	\end{align*}
 	Further, an application of the general Leibniz rule yields
 	\begin{align*}
 		\partial_q^k\left( \eta(q) \cdot W^{(q)}(x)\right) &= \sum_{\ell=0}^k {k\choose \ell}\cdot  \eta^{(\ell)}(q) \cdot \left( \partial_q^{k-\ell}W^{(q)}(x)\right),
 	\end{align*}
 	such that we can combine and obtain 
 	\begin{align*}
 		\mathbb{E}_x[(\tau_0^-)^{k}|\tau_0^-<\infty]  
 		&=\frac{(-1)^k}{\mathbb{P}_x(\tau_0^- <\infty) }\cdot  \lim_{q\downarrow 0} \Bigg(k \cdot \int_0^x \partial_q^{k-1} W^{(q)}(y)\diff y + q\cdot  \int_0^x \partial_q^{k} W^{(q)}(y)\diff y \\
 		&\qquad \qquad -  \sum_{l=0}^k {k\choose \ell}\cdot  \eta^{(\ell)}(q)\cdot \left( \partial_q^{k-\ell}W^{(q)}(x)\right)\Bigg).
 	\end{align*}
 	Now \eqref{eq_general_formula} follows, since $W^{(q)}$ and $\int_0^x W^{(q)}(y)\diff y$ are analytical w.r.t. $q$.  To prove \eqref{eq_generalMoments_convolution}, observe that from \cite[Eq. (8.29)]{Kyprianou2014} for $x>0$
 	$$\partial_q^k W^{(q)}(x) = \partial_q^k \sum_{\ell\geq 0} q^\ell (W^{(0)})^{\ast(\ell+1)}(x) = \sum_{\ell \geq k} (\partial_q^k q^\ell) (W^{(0)})^{\ast(\ell+1)}(x),$$
 	which implies $\lim_{q\downarrow 0}\partial_q^k  W^{(q)}(x) = k! (W^{(0)})^{\ast(k+1)}(x)$. \qed
 
 \subsubsection*{Proof of Lemma \ref{Theorem_FirstMoments}} 
 Recall that $\Phi$ is the well-defined inverse of $\psi(\theta)$ on the interval $[\Phi(0),\infty)$. Hence applying the chain rule on $q\mapsto q=\psi(\Phi(q))$ we observe that  
 \begin{equation} \label{Lemma_derivative_inverse}
 	\Phi'(q)= \frac{1}{\psi'(\Phi(q))}, \quad q\geq 0,
 \end{equation}
 where the case $q=0$ is interpreted in the limiting sense $q\downarrow 0$. This implies that (see also \cite[Thm 8.1 (ii)]{Kyprianou2014})
 \begin{equation}\label{eq-limiteta}
 	\lim_{q\downarrow 0} \frac{q}{\Phi(q)} = \begin{cases} \psi'(0+), & \text{if }\psi'(0+)\geq 0 , \\ 0, &\text{else.} \end{cases}
 \end{equation}

 In view of Proposition \ref{Proposition_formulae_to_work_with}, to prove Lemma \ref{Theorem_FirstMoments} one has to compute the first two derivatives of $\eta$ and their behavior as $q\downarrow 0$. For these we obtain by standard calculus, and using \eqref{Lemma_derivative_inverse}, 
 \begin{equation} \label{eq_Eta1(0+)}
 	\eta'(0+)  = 
 	\begin{cases}  \frac{1}{\Phi(0)}, & \text{if } \psi'(0+)<0,  \\ 
 		\frac{\psi''(0+)}{2\cdot \psi'(0+)},  & \text{if } \psi'(0+)>0 \text{ and }\psi''(0+)<\infty, 
 	\end{cases}
 \end{equation}
 while
 \begin{equation} \label{eq_Eta2(0+)}
 	\eta''(0+) = \begin{cases}
 		\frac{-2}{\Phi(0)^2\cdot\psi'(\Phi(0))}, & \text{if }  \psi'(0+)<0,\\
 		\frac{ \psi'''(0+)}{3\cdot\psi'(0+)^2} - \frac{\psi''(0+)^2}{2\cdot \psi'(0+)^3}, & \text{if } \psi'(0+)>0 \text{ and }\psi''(0+), |\psi'''(0+)|<\infty.
 	\end{cases}
 \end{equation}
 An evaluation of \eqref{eq_general_formula} for $k=1$, using \eqref{eq-limiteta} and \cite[Eq. (8.10)]{Kyprianou2014}, yields that
 \begin{align*}
 	\mathbb{E}_x[\tau_0^-|\tau_0^-<\infty]  
 	&= \frac{ (0\vee \psi'(0+)) \cdot \lim_{q\downarrow 0}(\partial_q W^{(q)}(x)) + \eta'(0+) W^{(0)}(x) -\int_0^x W^{(0)}(y)\diff y }{1-(0\vee \psi'(0+)) \cdot W^{(0)} (x)}. 
 \end{align*}
 Via \eqref{eq_Eta1(0+)} we thus derive \eqref{eq_Theorem_FirstMomentUnprof} and \eqref{eq_Theorem_FirstMomentprof}.\\
 Likewise, an evaluation of \eqref{eq_general_formula} for $k=2$ leads to \eqref{eq_Theorem_SecondMomentUnprof} and \eqref{eq_Theorem_SecondMomentprof} via \eqref{eq-limiteta}, \cite[Eq. (8.10)]{Kyprianou2014}, and \eqref{eq_Eta2(0+)}. \qed

 \subsubsection*{Proof of Theorem \ref{Theorem_Asymptotics_General_Moments}}
 
 We first prove Theorem \ref{Theorem_Asymptotics_General_Moments} under an additional condition as formulated in the upcoming proposition. Thereafter, we argue that the additional condition is always fulfilled and hence can be discarded.
 
We abbreviate throughout this section $W:=W^{(0)}$ and set \[u_k(x) := \mathbb{E}_x[(\tau_0^-)^k] =\mathbb{E}_x[(\tau_0^-)^k|\tau_0^-<\infty] , \quad k\in \NN.\] 
 
 \begin{proposition}\label{Prop_Asymptotics_ExtraCond}
 Consider the setting of Theorem \ref{Theorem_Asymptotics_General_Moments} and assume additionally that for some $k \in \mathbb{N}$ there exist $A_k,B_k>0$ such that
 	\begin{equation} \label{eq_Expectation_slower_poly}
 		u_k(x) \leq  A_k + B_k x^k \quad \text{for all }x\geq 0.
 	\end{equation}
 Then the Laplace transform of $u_k$ exists for all $\beta>0$ and it is given by \eqref{eq_proof_LaplaceTrafo_uk} and \eqref{eq_proof_LaplaceuinPhi}. Moreover \eqref{eq_Asymptotics_Unprof} holds for the chosen $k$.
 \end{proposition}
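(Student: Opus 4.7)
The plan is to Laplace-transform the convolution formula \eqref{eq_generalMoments_convolution} provided by Proposition \ref{Proposition_formulae_to_work_with}, extract the boundary value at $\beta = \Phi(0)$ by a change of variables, and then apply a Karamata-type Tauberian theorem for the asymptotics. The growth bound \eqref{eq_Expectation_slower_poly} immediately yields $\int_0^\infty e^{-\beta x} u_k(x)\diff x \leq A_k/\beta + B_k\cdot k!/\beta^{k+1} < \infty$ for every $\beta > 0$, so the Laplace transform exists on $\{\operatorname{\mathfrak{Re}}\beta > 0\}$ and is analytic there.

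For the explicit expression \eqref{eq_proof_LaplaceTrafo_uk}, I would first work on $\beta > \Phi(0)$. From \eqref{eq_proof_Definition_scale_by_laplace} at $q = 0$, together with the convolution property of Laplace transforms and Fubini's theorem, one obtains $\int_0^\infty e^{-\beta x} (W^{(0)})^{\ast m}(x)\diff x = \psi(\beta)^{-m}$ and $\int_0^\infty e^{-\beta x}\int_0^x (W^{(0)})^{\ast k}(y)\diff y \diff x = 1/(\beta\psi(\beta)^k)$. Since $\psi'(0+) < 0$ implies $\eta(0+) = 0$ by \eqref{eq-limiteta}, the $\ell = 0$ term in \eqref{eq_generalMoments_convolution} vanishes, and termwise integration gives \eqref{eq_proof_LaplaceTrafo_uk} on $(\Phi(0), \infty)$. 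Both sides extend analytically to $\{\operatorname{\mathfrak{Re}}\beta > 0\}$, so the identity holds for all $\beta > 0$ with $\beta \neq \Phi(0)$ by analytic continuation.

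For the boundary case $\beta = \Phi(0)$, I substitute $q := \psi(\beta)$, under which $\beta = \Phi(q) \to \Phi(0)$ as $q \downarrow 0$ and the bracket on the right-hand side of \eqref{eq_proof_LaplaceTrafo_uk} becomes $q^{-k-1}\bigl(\eta(q) - \sum_{\ell=1}^k \eta^{(\ell)}(0+)q^\ell/\ell!\bigr)$. Since $\Phi(0) > 0$ and $\psi'(\Phi(0)) > 0$ by convexity of $\psi$, the inverse function theorem makes $\Phi$ and hence $\eta$ smooth at $0$, and Taylor's theorem lets this bracket converge to $\eta^{(k+1)}(0+)/(k+1)!$. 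The Leibniz rule applied to $\eta(q) = q\cdot(1/\Phi(q))$ gives $\eta^{(k+1)}(0+) = (k+1)\lim_{q\downarrow 0}\partial^k(1/\Phi(q))$, producing \eqref{eq_proof_LaplaceuinPhi}.

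For the asymptotics, coupling $X$ started at $x' > x$ via $X^{(x')} - X^{(x)} \equiv x' - x$ shows $\tau_0^-$ stochastically increases in $x$, so $u_k$ is non-decreasing. As $\beta \downarrow 0$, $\psi(\beta)/\beta \to \psi'(0+) \in [-\infty, 0)$, so the leading term in \eqref{eq_proof_LaplaceTrafo_uk} behaves like $k!/(|\psi'(0+)|^k \beta^{k+1})$ (with the convention $1/\infty = 0$), while the remaining terms are of order $\psi(\beta)^{-j}$ with $j \leq k$, hence $o(\beta^{-(k+1)})$. Applying Karamata's Tauberian theorem to the non-decreasing antiderivative $U(x) := \int_0^x u_k(y)\diff y$ yields $U(x) \sim x^{k+1}/\bigl((k+1)|\psi'(0+)|^k\bigr)$, and the monotonicity-based bracketing $\epsilon x \cdot u_k(x) \in \bigl[U(x) - U((1-\epsilon)x),\, U((1+\epsilon)x) - U(x)\bigr]$ followed by $\epsilon \downarrow 0$ delivers \eqref{eq_Asymptotics_Unprof}. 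The main obstacle is this final squeeze step, particularly the degenerate case $\psi'(0+) = -\infty$, where \eqref{eq_Asymptotics_Unprof} reduces to a little-$o$ assertion requiring a monotone version of Karamata (applied to $\epsilon \cdot U$ for each $\epsilon > 0$).
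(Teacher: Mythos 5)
Your proof is correct and follows essentially the same route as the paper: establish existence of the Laplace transform from the polynomial bound, compute it termwise for $\beta>\Phi(0)$ and extend to all $\beta>0$, obtain the boundary value at $\beta=\Phi(0)$ via the substitution $q=\psi(\beta)$, and close with a monotone Tauberian argument. Two small cosmetic differences: where you invoke Taylor's theorem with Peano remainder to evaluate $\lim_{q\downarrow 0} q^{-k-1}\bigl(\eta(q)-\sum_{\ell=0}^k\eta^{(\ell)}(0+)q^\ell/\ell!\bigr)$, the paper instead iterates l'Hospital's rule $k$ times — equivalent content, since $\eta$ is smooth at $0$ when $\Phi(0)>0$; and where you unpack Karamata plus a monotone-density squeeze, the paper cites \cite[Thm.~XIII.5.4]{Feller1971}, which bundles exactly those two steps. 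Your caution about the degenerate case $\psi'(0+)=-\infty$ is warranted but resolves elementarily: for nondecreasing $u_k$ one has $u_k(1/\beta)\le e\,\beta\,\mathfrak{L}_k(\beta)$, so $\mathfrak{L}_k(\beta)=o(\beta^{-(k+1)})$ already forces $u_k(x)=o(x^k)$ without invoking the full Tauberian machinery.
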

\begin{proof}
To compute the Laplace transform of $u_k$ we recall \eqref{eq_generalMoments_convolution}, where in the current setting $\mathbb{P}_x(\tau_0^-<\infty)=~1$. In \cite{BehmeStrietzel2021} it has been shown that $\psi'(0+)<0$ implies that  $\eta^{(\ell)}(0+)<\infty$ for any $\ell\in\mathbb{N}$. Additionally, $\eta^{(0)}(0+) = \lim_{q\to 0 }q/\Phi(q) = 0$, cf. \cite[Thm. 8.1 (ii)]{Kyprianou2014}. Thus \eqref{eq_generalMoments_convolution} reduces to
\begin{equation} \label{eq_proof_u_representation}
 u_k(x)  = (-1)^k \cdot k! \left( \int_0^x W^{\ast k}(y)\diff y  -  \sum_{\ell=1}^{k}  \frac{\eta^{(\ell)}(0+)}{\ell!}   W^{\ast (k-\ell+1)}(x) \right), \; k\in \NN.
\end{equation}
Recall the definition of $W$ in \eqref{eq_proof_Definition_scale_by_laplace} as the inverse Laplace transform of $1/\psi(\beta)$.
Using standard calculation rules for Laplace transforms, cf. \cite[Chapter XIII]{Feller1971} or  \cite{Widder1946}, we obtain for any $\beta>\Phi(0)$
\begin{align*} 
		\int_0^\infty e^{-\beta x} u_k(x) \diff x  
		&=(-1)^k\cdot k! \cdot \left(\frac{1}{\beta} \cdot \frac{1}{\psi(\beta)^k} - \sum_{\ell=1}^k  \frac{\eta^{(\ell)}(0+)}{\ell!} \cdot \frac{1}{\psi(\beta)^{k-\ell+1}}\right) =: \mathfrak{L}_k(\beta),
\end{align*}
which agrees with the formula in \eqref{eq_proof_LaplaceTrafo_uk}. 
Furthermore, Assumption \eqref{eq_Expectation_slower_poly} implies that the Laplace transform of $u_k$ exists also for $0<\beta\leq \Phi(0)$, cf. \cite[Thm. II.2.1]{Widder1946}. Hence \eqref{eq_proof_LaplaceTrafo_uk} holds for all $\beta>0$ as claimed by \cite[Thm. II.6.3]{Widder1946}.\\
To compute $\mathfrak{L}_k(\Phi(0))$ and prove \eqref{eq_proof_LaplaceuinPhi} we consider the limit $	\lim_{\beta\to\Phi(0)}\mathfrak{L}_k(\beta) $.
Recall that for $q\geq 0$ we have $\psi(\Phi(q))=q$ and that $\Phi$ is a continuous function on $[0,\infty)$. Thus,
\begin{align}
	\lim_{\beta\to\Phi(0)}\left(  \frac{1}{\beta} \cdot \frac{1}{\psi(\beta)^k} - \sum_{\ell=1}^k  \frac{\eta^{(\ell)}(0+)}{\ell!} \cdot \frac{1}{\psi(\beta)^{k-\ell+1}} \right)&= 	\lim_{\gamma\downarrow 0}\left(\frac{1}{\Phi(\gamma)} \cdot \frac{1}{\gamma^k} - \sum_{\ell=1}^k  \frac{\eta^{(\ell)}(0+)}{\ell!} \cdot \frac{1}{\gamma^{k-\ell+1}}\right) \nonumber \\ 
	&= 	\lim_{\gamma\downarrow 0} \frac{\frac{1}{\Phi(\gamma)} - \sum_{\ell=1}^k  \frac{\eta^{(\ell)}(0+)}{\ell!} \cdot \gamma^{\ell-1}}{\gamma^k} \nonumber \\
	&=: \lim_{\gamma \downarrow 0 } g_k(\gamma). \label{eqdefg_k}
\end{align} 
Hereby, as $\eta(q) = \frac{q}{\Phi(q)}$, the general Leibniz rule implies that \begin{align*}
 	\eta^{(k)}(q) 
 	&= \sum_{\ell=0}^k {k \choose \ell}\cdot \left( \partial_q^{\ell} q\right) \cdot \left( \partial_q^{k-\ell}\left(\Phi(q)^{-1}\right)\right) = q\cdot  \partial_q^k\left(\Phi(q)^{-1}\right) +  k\cdot \partial_q^{k-1}\left(\Phi(q)^{-1}\right),
\end{align*} for any $k\geq 1$. Hence we have 
\begin{equation} \label{eq_proof_eta_0}
\eta^{(k)}(0+) = k\cdot \lim_{q\downarrow 0} \partial_q^{k-1}\left(\Phi(q)^{-1}\right), \quad k\in\NN, 
\end{equation} 
which is finite by the results in \cite{BehmeStrietzel2021} as mentioned above. 

We now compute $\lim_{\gamma \downarrow 0 } g_k(\gamma)$, where for $k=1$ with \eqref{eq_proof_eta_0}
\begin{align*}
	\lim_{\gamma\downarrow 0} g_1(\gamma) = \lim_{\gamma\downarrow 0} \frac{\frac{1}{\Phi(\gamma)} -   \frac{1}{\Phi(0)}}{\gamma} =  \lim_{\gamma \downarrow 0} \partial \left(\frac{1}{\Phi(\gamma)}\right)=  \frac{\eta''(0+)}{2}.
\end{align*} 
For $k\geq 2$ we  use l'Hospitals rule to obtain
\begin{align*}
	\lim_{\gamma \downarrow 0 } g_k(\gamma) &= \lim_{\gamma\downarrow 0} \frac{\partial \left(\frac{1}{\Phi(\gamma)}\right)  - \sum_{\ell=2}^k  \frac{\eta^{(\ell)}(0+)}{\ell!} (\ell -1) \cdot \gamma^{\ell-2}}{k\gamma^{k-1}}\\
	& =\lim_{\gamma\downarrow 0} \frac{\partial \left(\frac{1}{\Phi(\gamma)}\right) - \frac{\eta''(0+)}{2}  - \sum_{\ell=3}^k  \frac{\eta^{(\ell)}(0+)}{\ell!} (\ell -1) \cdot \gamma^{\ell-2}}{k\gamma^{k-1}} , \label{eq-prooflimg2}
\end{align*}
which, again in the light of \eqref{eq_proof_eta_0}, implies for $k=2$ 
$$\lim_{\gamma \downarrow 0} g_2(\gamma) = \frac12 \lim_{\gamma \downarrow 0} \frac{\partial \left(\frac{1}{\Phi(\gamma)}\right) - \lim_{\zeta \downarrow 0} \partial \left(\frac{1}{\Phi(\zeta)}\right)}{\gamma} = \frac12 \lim_{\gamma \downarrow 0} \partial^2 \left(\frac{1}{\Phi(\gamma)}\right) = \frac{\eta'''(0+)}{3!}.$$
Further iterating this argument yields
$$\lim_{\gamma\downarrow 0} g_k(\gamma) = \frac{1}{k!} \lim_{\gamma\downarrow 0} \partial^{k}\left(\frac{1}{\Phi(\gamma)}\right)  = \frac{\eta^{(k+1)}(0+)}{(k+1)!}, \quad k\in \NN,$$
 and via \eqref{eqdefg_k} this proves
 $$	\lim_{\beta\to\Phi(0)}\mathfrak{L}_k(\beta) = (-1)^k \cdot k!\cdot \frac{\eta^{(k+1)}(0+)}{(k+1)!} = \frac{(-1)^k}{k+1} \cdot\eta^{(k+1)}(0+) = (-1)^k \cdot \lim_{q\downarrow 0} \partial^{k}\left(\frac{1}{\Phi(q)}\right) $$
 which is \eqref{eq_proof_LaplaceuinPhi}. \\
 To prove the asymptotics as stated in \eqref{eq_Asymptotics_Unprof} we apply a Tauberian theorem:  As $u_k\colon[0,\infty)\to [0,\infty)$ is finite (see  \cite{BehmeStrietzel2021} or Section \ref{S1b}) and monotonely increasing, $U_k(x):= \int_0^x u_k(x) \diff x$ defines an improper distribution function. Moreover, we have
\begin{equation*}
	\lim_{\beta\downarrow 0} \frac{\mathfrak{L}_k(\beta)}{\beta^{-(k+1)}} = (-1)^k\cdot k! \cdot \lim_{\beta\downarrow 0} \left(\frac{\beta^k}{\psi(\beta)^k} - \sum_{\ell=1}^k  \frac{\eta^{(\ell)}(0+)}{\ell!} \cdot \frac{\beta^{k+1}}{\psi(\beta)^{k-\ell+1}}\right) = (-1)^k \cdot k! \cdot\frac{1}{\psi'(0+)^k},
\end{equation*} since $\beta/\psi(\beta)\to \psi'(0+)^{-1}$ by l'Hospital's rule.  Hence,
\begin{equation*}
	\mathfrak{L}_k(\beta) \sim \frac{1}{\beta^{k+1}} \cdot\left((-1)^k \cdot k! \cdot \frac{1}{\psi'(0+)^k} \right), \quad \text{as }\beta \to 0.
\end{equation*}
Now \cite[Thm. XIII.5.4]{Feller1971} yields the claim.
\end{proof}

 It remains to prove that in the setting of Theorem \ref{Theorem_Asymptotics_General_Moments} Assumption \eqref{eq_Expectation_slower_poly} is valid for all $k\in \NN$. We will show this via induction.  The next lemma provides the initial case $k=1$ while the subsequent proposition covers the induction step.

\begin{lemma} Under the assumptions of Theorem \ref{Theorem_Asymptotics_General_Moments} there exist $A,B>0$ such that
	\begin{equation*}
		u_1(x)\leq A+B x \quad \text{for any }x\geq 0. 
	\end{equation*}
\end{lemma}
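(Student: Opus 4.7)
The strategy is to prove a subadditivity-type bound $u_1(x) \leq u_1(x_0) + u_1(x-x_0)$ for any fixed $x_0 > 0$ and all $x > x_0$, and then iterate it together with the monotonicity of $u_1$ to obtain a linear upper envelope. The subadditivity will come from the strong Markov property of $X$ applied at a first passage time, combined with spatial homogeneity of the underlying Lévy process.

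Fix $x_0 > 0$. By \cite[Thm.~3.1]{BehmeStrietzel2021} (or by direct inspection of Lemma~\ref{Theorem_FirstMoments}\,(i)), the assumption $\psi'(0+) \in [-\infty, 0)$ implies $u_1(x_0) < \infty$, and since $X$ drifts to $-\infty$, the first passage time $\sigma_x := \inf\{t \geq 0 : X_t < x - x_0\}$ is $\PP_x$-a.s.\ finite for every $x > x_0$. On $\{\sigma_x < \infty\}$ we have $\sigma_x \leq \tau_0^-$ because $x - x_0 \geq 0$. Applying the strong Markov property at $\sigma_x$ and taking expectations under $\PP_x$ gives
\[
u_1(x) \;=\; \EE_x[\sigma_x] \;+\; \EE_x\bigl[\tilde u_1(X_{\sigma_x})\bigr],
\]
where $\tilde u_1$ is the extension of $u_1$ to all of $\RR$ by $\tilde u_1(y) := 0$ for $y < 0$ (consistent with $\tau_0^- = 0$ under $\PP_y$ for $y < 0$). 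Spatial homogeneity of $X$ identifies the first summand as $u_1(x_0)$, while monotonicity of $u_1$ (again by spatial homogeneity: for $y_1 \leq y_2$, $\tau_0^-$ is stochastically smaller under $\PP_{y_1}$ than under $\PP_{y_2}$) together with the trivial bound $X_{\sigma_x} \leq x - x_0$ bound the second summand by $u_1(x - x_0)$. This yields the desired inequality $u_1(x) \leq u_1(x_0) + u_1(x - x_0)$.

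Iterating gives $u_1(n x_0) \leq n\, u_1(x_0)$ for every $n \in \NN$. For arbitrary $x \geq 0$, pick $n := \lceil x/x_0 \rceil \vee 1$ and invoke monotonicity to get
\[
u_1(x) \;\leq\; u_1(n x_0) \;\leq\; n\, u_1(x_0) \;\leq\; u_1(x_0) + \tfrac{u_1(x_0)}{x_0}\, x,
\]
which is the claimed bound with $A := u_1(x_0)$ and $B := u_1(x_0)/x_0$. No step in this argument is particularly delicate: the two basic ingredients---a.s.\ finiteness of $\sigma_x$ and monotonicity of $u_1$---are immediate from standard properties of spectrally negative Lévy processes drifting to $-\infty$. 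The mildly delicate point, and therefore the only candidate for an obstacle, is the appearance of a possibly negative overshoot $X_{\sigma_x}$; this is handled painlessly by the extension $\tilde u_1 \equiv 0$ on $(-\infty, 0)$, which is justified by the fact that $\tau_0^- = 0$ whenever $X_0 < 0$.
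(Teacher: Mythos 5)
Your proof is correct, but it takes a genuinely different route from the paper's. The paper proceeds analytically: it starts from the explicit formula $u_1(x) = \frac{1}{\Phi(0)}W^{(0)}(x) - \int_0^x W^{(0)}(y)\,\diff y$ of Lemma \ref{Theorem_FirstMoments}\,(i), substitutes the representation $W^{(0)}(x) = \psi'(\Phi(0))^{-1}e^{\Phi(0)x} - h(x)$ with $h(x) = \psi'(\Phi(0))^{-1}\mathbb{P}_x(T_{\{0\}}<\infty)$ bounded between $0$ and $\psi'(\Phi(0))^{-1}$, and observes that the exponential growth cancels, leaving $u_1(x) \leq \frac{1}{\Phi(0)\psi'(\Phi(0))} + \frac{x}{\psi'(\Phi(0))}$. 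Your argument instead establishes the subadditivity $u_1(x) \leq u_1(x_0) + u_1(x-x_0)$ directly from the strong Markov property at the first passage time below $x - x_0$, together with spatial homogeneity and the stochastic monotonicity of $\tau_0^-$ in the initial capital; iterating then gives the linear envelope. Both arguments are sound, and yours is arguably more robust in that it requires no scale-function formula whatsoever and only uses that the process has stationary independent increments, drifts to $-\infty$, and has a finite expected passage time from some positive level. The paper's route has the advantage of delivering explicit constants $A$ and $B$ in terms of $\Phi(0)$ and $\psi'(\Phi(0))$, which is more in keeping with the quantitative flavour of the surrounding results, whereas your constants $u_1(x_0)$ and $u_1(x_0)/x_0$ are qualitative. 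One small point worth spelling out if you write this up: the identity $\tau_0^- = \sigma_x + \tau_0^-\circ\theta_{\sigma_x}$ on $\{\sigma_x < \infty\}$ uses that $X_t \geq x - x_0 \geq 0$ for all $t < \sigma_x$, so ruin cannot have occurred before $\sigma_x$; and your extension $\tilde u_1 \equiv 0$ on $(-\infty,0)$ correctly handles the case of an overshoot that jumps straight past $0$.
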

\begin{proof}
	We use a semi-explicit representation of the scale function as given in  \cite[Eq. (29)]{Avram2020}, which reads 
	\begin{equation*}
		W^{(q)}(x) = \Phi'(q) \cdot \left(e^{\Phi(q)x} -\mathbb{P}_x(T_{\{0\}}<\mathbf{e}_q)\right)=\frac{1}{\psi'(\Phi(q))} \cdot \left(e^{\Phi(q)x} -\mathbb{P}_x(T_{\{0\}}<\mathbf{e}_q)\right), 
	\end{equation*} where $T_{\{0\}}:= \inf\{t\geq 0: ~ X_t=0\}$ denotes the first hitting time of $0$ and $\mathbf{e}_q$ is an exponentially distributed, independent, random time with parameter $q$.  Note that $\Phi'(q) =\frac{1}{\psi'(\Phi(q))}$ can be shown by a simple application of the chain rule; see also \eqref{Lemma_derivative_inverse}.  For $q=0$ we thus have
	\begin{equation} \label{eq_proof_ScaleRepresentation_h}
		W(x) = \frac{1}{\psi'(\Phi(0))} \cdot e^{\Phi(0)x} -\frac{\mathbb{P}_x(T_{\{0\}}<\infty)}{\psi'(\Phi(0))} =: \frac{1}{\psi'(\Phi(0))} \cdot e^{\Phi(0)x} - h(x), 
	\end{equation} where $h(x) \in [0,\psi'(\Phi(0))^{-1}]$ for all $x\geq 0$. Using the explicit form of $u_1$ shown in \eqref{eq_Theorem_FirstMomentUnprof} we now derive
	\begin{align*}
		u_1(x) &= \frac{1}{\Phi(0)} W^{(0)}(x) - \int_0^x W^{(0)}(y)\diff y \\
		&\leq \frac{1}{\Phi(0)} \frac{1}{\psi'(\Phi(0))} \cdot e^{\Phi(0)x} - \int_0^x \left(\frac{1}{\psi'(\Phi(0))} \cdot e^{\Phi(0)y} - \frac{1}{\psi'(\Phi(0))} \right)\diff y \\
		&= \frac{1}{\Phi(0)\cdot\psi'(\Phi(0))} + \frac{1}{\psi'(\Phi(0))} \cdot x, 
	\end{align*}
which yields the claim.
\end{proof}

\begin{proposition}\label{Prop_extracondition}
	Consider the setting of Theorem \ref{Theorem_Asymptotics_General_Moments} and assume \eqref{eq_Expectation_slower_poly} holds for some $k\in \NN$, $A_k, B_k>0$. Then there also exist $A_{k+1}, B_{k+1}>0$ such that 
		\begin{equation*} 
		u_{k+1}(x) \leq  A_{k+1} + B_{k+1} x^{k+1} \quad \text{for all }x\geq 0.
	\end{equation*}
\end{proposition}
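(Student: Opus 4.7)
The plan is to derive a clean recursive identity that expresses $u_{k+1}$ linearly in $W(x)$ and a convolution involving $u_k$, and then exploit the decomposition $W = \tfrac{1}{\psi'(\Phi(0))} e^{\Phi(0)\cdot} - h$ from \eqref{eq_proof_ScaleRepresentation_h} to see that the exponentially growing parts cancel. Starting from $(\tau_0^-)^{k+1} = (k+1)\int_0^{\tau_0^-}(\tau_0^- - s)^k\, \rmd s$ and applying the strong Markov property at time $s$, one finds
\begin{equation*}
u_{k+1}(x) = (k+1)\, \EE_x\!\left[\int_0^{\tau_0^-} u_k(X_s)\, \rmd s\right] = (k+1) \int_0^\infty u_k(y)\, U(x, \rmd y),
\end{equation*}
where $U(x,\rmd y) = \bigl(e^{-\Phi(0) y} W(x) - W(x-y)\bigr)\, \rmd y$ is the $0$-potential of $X$ killed on exiting $[0,\infty)$ (see \cite[Thm.~8.7]{Kyprianou2014}). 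The induction hypothesis together with $\Phi(0)>0$ guarantees that $M_k := \int_0^\infty u_k(y) e^{-\Phi(0)y}\, \rmd y$ is finite (and equals $\mathfrak{L}_k(\Phi(0))$ by Proposition~\ref{Prop_Asymptotics_ExtraCond}), so the identity
\begin{equation*}
u_{k+1}(x) = (k+1)\Bigl[M_k\, W(x) - \int_0^x u_k(y)\, W(x-y)\, \rmd y\Bigr]
\end{equation*}
is well posed. The same identity can alternatively be obtained purely algebraically from \eqref{eq_proof_u_representation}, after using the relation $(-1)^k \eta^{(k+1)}(0+) = (k+1)\, M_k$ which follows from \eqref{eq_proof_eta_0} and \eqref{eq_proof_LaplaceuinPhi}.

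Next, I insert $W = \tfrac{1}{\psi'(\Phi(0))} e^{\Phi(0)\cdot} - h$ with $0 \le h \le 1/\psi'(\Phi(0))$ into both occurrences of $W$ and group terms by their exponential content. The exponential contributions combine into
\begin{equation*}
\frac{(k+1)\, e^{\Phi(0) x}}{\psi'(\Phi(0))}\left[ M_k - \int_0^x u_k(y)\, e^{-\Phi(0) y}\, \rmd y \right] = \frac{k+1}{\psi'(\Phi(0))} \int_0^\infty u_k(x+z)\, e^{-\Phi(0) z}\, \rmd z,
\end{equation*}
where the second equality uses the substitution $z=y-x$. By the induction hypothesis $u_k(x+z) \leq A_k + B_k (x+z)^k$ and the binomial expansion of $(x+z)^k$, this expression is a polynomial in $x$ of degree at most $k$, hence bounded by some $\tilde A + \tilde B x^k$.

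The remainder of $u_{k+1}(x)$ consists of $-(k+1) M_k h(x)$, which is non-positive and may be discarded when seeking an upper bound, and $(k+1) \int_0^x u_k(y) h(x-y)\, \rmd y$, which via $h \leq 1/\psi'(\Phi(0))$ and the induction hypothesis is dominated by
\begin{equation*}
\frac{k+1}{\psi'(\Phi(0))} \int_0^x (A_k + B_k y^k)\, \rmd y = \frac{k+1}{\psi'(\Phi(0))}\left[A_k x + \frac{B_k}{k+1} x^{k+1}\right],
\end{equation*}
i.e.\ of order $x^{k+1}$. Summing both estimates yields the desired bound $u_{k+1}(x) \leq A_{k+1} + B_{k+1} x^{k+1}$ for suitable constants. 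The main obstacle is setting up the recursive identity cleanly; once that is in place, the cancellation of exponentials and the subsequent polynomial bounds are routine.
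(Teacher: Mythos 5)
Your proof is correct and follows essentially the same route as the paper's: the identity $u_{k+1}=(k+1)\bigl[M_k\,W-u_k\ast W\bigr]$ with $(k+1)M_k=(-1)^k\eta^{(k+1)}(0+)$ is exactly the paper's starting point (obtained there from \eqref{eq_proof_u_representation}), and the decomposition $W=\tfrac{1}{\psi'(\Phi(0))}e^{\Phi(0)\cdot}-h$ with the same treatment of the $h$-terms is identical. The only genuine difference is local and in your favor: where the paper shows $\int_0^x e^{-\Phi(0)z}f(z)\,\rmd z\to 0$ and then invokes l'Hospital to get $s_1(x)=O(x^k)$, you rewrite the exponential part exactly as $\tfrac{k+1}{\psi'(\Phi(0))}\int_0^\infty u_k(x+z)e^{-\Phi(0)z}\,\rmd z$ and bound it directly by the induction hypothesis, which is slightly cleaner and uses the same key input \eqref{eq_proof_LaplaceuinPhi}.
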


\begin{proof}
	Fix $k\in\NN$ such that \eqref{eq_Expectation_slower_poly} holds for some $A_k, B_k>0$ and recall the representation \eqref{eq_proof_u_representation} of $u_k$ used in the proof of Proposition \ref{Prop_Asymptotics_ExtraCond} which yields 
	\begin{align*}
			u_{k+1}(x) 	&=  (-1)^{k+1} (k+1)! \Bigg( \int_0^x W^{\ast (k+1)}(y)\diff y  -  \sum_{\ell=1}^{k}  \frac{\eta^{(\ell)}(0+)}{\ell!}   W^{\ast (k+1-\ell+1)}(x) \Bigg)\\ &\quad  - (-1)^{k+1}\eta^{(k+1)}(0+)\cdot W(x)  \\ 
			&= -(k+1) \cdot  \big(u_k *W\big) (x)  + (-1)^k\cdot \eta^{(k+1)}(0+)\cdot W(x),
		\end{align*}
since
	\begin{align*}
		\int_0^x W^{\ast (k+1)}(y)\diff y & = \int_0^x \int_0^y W^{\ast k}(y-z) W(z) \diff z \,\diff y
		= \left(\int_0^{\cdot} W^{\ast k}(y) \diff y  \ast W\right) (x).
	\end{align*}
Using the representation \eqref{eq_proof_ScaleRepresentation_h} of the scale function $W$ we thus obtain
\begin{align}
	\lefteqn{u_{k+1}(x)} \nonumber\\  &= - (k+1)  \int_0^x  u_k(x-y)  \left(\frac{e^{\Phi(0)y}}{\psi'(\Phi(0))}  - h(y)\right) \diff y + (-1)^k  \eta^{(k+1)}(0+) \left(\frac{e^{\Phi(0)x}}{\psi'(\Phi(0))}   - h(x)\right) \nonumber\\
	&= -(k+1) \int_0^x u_k(x-y) \frac{e^{\Phi(0)y}}{\psi'(\Phi(0))} \diff y  + (k+1) \int_0^x  u_k(x-y) h(y) \diff y \nonumber \\ 
	&\quad + (-1)^k\cdot \eta^{(k+1)}(0+) \left( \frac{\Phi(0)}{\psi'(\Phi(0))}   \left(\int_0^x e^{\Phi(0)y}\diff y +1 \right) -h(x) \right) \nonumber \\ 
	&= \frac{1}{\psi'(\Phi(0))} \int_0^x e^{\Phi(0)y}\Big((-1)^k  \cdot \eta^{(k+1)}(0+) \Phi(0) - (k+1)\cdot u_k(x-y) \Big)\diff y  \nonumber \\ 
	&\quad + (k+1)\int_0^x  u_k(x-y) h(y) \diff y  + (-1)^k \cdot \eta^{(k+1)}(0+)  \left(\frac{\Phi(0)}{\psi'(\Phi(0))}- h(x)\right) \nonumber \\ 
	&=: s_1(x) + s_2(x) + s_3(x).\label{eq_splitu}
\end{align}
Hereby, as $h(x) = \frac{\mathbb{P}_x(T_{\{0\}}<\infty)}{\psi'(\Phi(0))} \in [0,\psi'(\Phi(0))^{-1}]$ for all $x\geq 0$, and as $\eta^{(k+1)}(0+)$ is finite by the results in \cite{BehmeStrietzel2021}, we note that the third summand $s_3(x)$ is bounded by some positive constant.\\
 Moreover, by assumption and boundedness of $h$, we have
\begin{equation*}
	0 \leq s_2(x) \leq \frac{k+1}{\psi'(\Phi(0))} \int_0^x u_k(y)\diff y \leq \frac{k+1}{\psi'(\Phi(0))} \int_0^x (A + B y^k) \diff y \leq A^* + B^* x^{k+1},
\end{equation*} for some constants $A^*,B^*>0$ and any $x\geq 0$, since $\psi'(\Phi(0))>0$ which in turn follows from the strict convexity of $\psi$ and $\psi(0)=\psi(\Phi(0))=0$. 
 It remains to consider 
\begin{align*}
	s_1(x) & = \frac{e^{\Phi(0)x}}{\psi'(\Phi(0))} \int_0^x e^{-\Phi(0)z}\left((-1)^k  \cdot \eta^{(k+1)}(0+)\cdot\Phi(0) - (k+1)\cdot u_k(z) \right)\diff z\\
	&=: \frac{e^{\Phi(0)x}}{\psi'(\Phi(0))} \int_0^x e^{-\Phi(0)z} f(z) \diff z.
\end{align*}
By Proposition \ref{Prop_Asymptotics_ExtraCond}, Equation \eqref{eq_proof_LaplaceuinPhi} holds, which in sight of \eqref{eq_proof_eta_0} implies that  
\begin{align*} \lim_{x\to \infty} \int_0^x e^{-\Phi(0)z} f(z) \diff z & = \lim_{x\to \infty}  \left((-1)^k   \eta^{(k+1)}(0+) (1-e^{-\Phi(0)x}) -  (k+1) \int_0^x e^{-\Phi(0)z} u_k(z) \diff z \right)	\\ 
	&= (-1)^k  \eta^{(k+1)}(0+) - (k+1) \int_0^\infty e^{-\Phi(0)z} u_k(z) \diff z \\
	&= 0. \end{align*} 
Hence we may use l'Hospitals rule to obtain
\begin{align*}
\psi'(\Phi(0))\cdot	\lim_{x\to\infty} \frac{s_1(x)}{x^k} &=   \lim_{x\to\infty} \frac{\int_0^x e^{-\Phi(0)z} f(z) \diff z }{x^k\cdot e^{-\Phi(0)x}}
	=  \lim_{x\to\infty} \frac{f(x)}{k\cdot x^{k-1}- x^k\cdot \Phi(0)} \\
	&= \frac{k+1}{\Phi(0)} \cdot \frac{(-1)^k}{\psi'(0+)^k}, 
\end{align*} as our induction hypothesis implies \eqref{eq_Asymptotics_Unprof} by Proposition \ref{Prop_Asymptotics_ExtraCond}. Hence $s_1,s_2$ and $s_3$ in \eqref{eq_splitu} are bounded polynomially with maximal degree $k+1$. This completes our proof.
\end{proof}

\subsubsection*{Proof of Proposition \ref{Proposition_LaplaceTransform_profitable}}

Throughout this section we abbreviate $W:=W^{(0)}$ as before, set
\[u_k(x) := \mathbb{E}_x[(\tau_0^-)^k \mathds{1}_{\{\tau_0^-<\infty \}}] = \mathbb{P}_x(\tau_0^- <\infty) \mathbb{E}_x[(\tau_0^-)^k|\tau_0^-<\infty] , \quad k\in \NN, \]  and define
\[\varphi(q):= \frac{\psi(q)}{q}, \quad q>0.\]

The proof of Proposition \ref{Proposition_LaplaceTransform_profitable} will be split into two parts. First we prove the following auxiliary result.

\begin{lemma} \label{Lemma_PsiByBeta}
	Assume $k\in\NN$ is such that $\psi^{(k+1)}(0+)$ is finite. Then
	\begin{equation*}
		\lim_{\beta\downarrow 0} \varphi^{(k)}(\beta)  = \frac{\psi^{(k+1)}(0+)}{k+1}. 
	\end{equation*}
\end{lemma}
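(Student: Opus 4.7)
The plan is to exploit $\psi(0) = 0$ and reduce the statement to a standard calculus identity via the fundamental theorem of calculus. For $\beta > 0$,
\begin{equation*}
\varphi(\beta) = \frac{\psi(\beta) - \psi(0)}{\beta} = \int_0^1 \psi'(\beta t)\,\diff t.
\end{equation*}
Since the Laplace exponent $\psi$ is $C^\infty$ on $(0,\infty)$, repeatedly differentiating under the integral sign (justified by local boundedness of the integrand on any compact subset of $(0,\infty)$) yields
\begin{equation*}
\varphi^{(k)}(\beta) = \int_0^1 t^k \,\psi^{(k+1)}(\beta t)\,\diff t.
\end{equation*}
Formally letting $\beta \downarrow 0$ and pulling the limit inside then produces $\psi^{(k+1)}(0+)\int_0^1 t^k\,\diff t = \psi^{(k+1)}(0+)/(k+1)$, which is exactly the claim.

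To make the limiting step rigorous, I will read off from \eqref{eq-Laplaceexp} the representation, valid for $k+1\geq 2$ and $\beta\geq 0$,
\begin{equation*}
\psi^{(k+1)}(\beta) = \sigma^2\,\mathds{1}_{\{k+1=2\}} + \int_{(-\infty,0)} x^{k+1} e^{\beta x}\,\Pi(\diff x).
\end{equation*}
The assumption that $\psi^{(k+1)}(0+)$ is finite is equivalent to $\int_{(-\infty,0)} |x|^{k+1}\,\Pi(\diff x) < \infty$. Because $|x^{k+1} e^{\beta x}|\leq |x|^{k+1}$ for all $\beta\geq 0$ and $x<0$, dominated convergence applied inside the Lévy integral shows that $\psi^{(k+1)}$ extends right-continuously to $0$, and simultaneously supplies the uniform bound $|\psi^{(k+1)}(\beta t)| \leq |\psi^{(k+1)}(0+)|$. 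A second application of dominated convergence to the outer integral over $t\in[0,1]$ then legitimizes interchanging $\lim_{\beta\downarrow 0}$ with $\int_0^1 \cdot\,\diff t$ and finishes the argument.

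The only point that needs any genuine care is the continuity of $\psi^{(k+1)}$ at $0+$ under the stated finiteness hypothesis; once this is in hand, the rest is routine. An alternative route via the Leibniz rule, $\psi^{(k+1)}(q) = (k+1)\varphi^{(k)}(q) + q\,\varphi^{(k+1)}(q)$, would reduce everything to showing $q\,\varphi^{(k+1)}(q) \to 0$ as $q\downarrow 0$, but this itself seems to call for the integral-representation machinery above, so I expect the direct approach to be the cleanest.
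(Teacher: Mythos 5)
Your proof is correct and takes a genuinely different route from the paper's. The paper expands $\varphi^{(k)}(\beta)$ via the Leibniz rule, subtracts the finite Taylor polynomial $\psi_-(\beta)=\sum_{\ell=0}^{k+1}(-1)^\ell\tfrac{\beta^\ell}{\ell!}\psi^{(\ell)}(0+)$, and applies l'Hospital's rule $(k+1)$ times; this is a purely real-analytic argument that uses nothing beyond the assumed finiteness of the one-sided derivatives. You instead use the Hadamard-type representation $\varphi(\beta)=\int_0^1\psi'(\beta t)\,\diff t$, differentiate under the integral to get $\varphi^{(k)}(\beta)=\int_0^1 t^k\psi^{(k+1)}(\beta t)\,\diff t$, and pass the limit inside via dominated convergence, with the dominating function supplied by the Lévy--Khintchine form of $\psi^{(k+1)}$. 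Your approach is shorter and more transparent, but is specific to Laplace exponents (it uses the monotone kernel $e^{\beta x}$, $x<0$, to get the uniform bound $|\psi^{(k+1)}(\beta t)|\le|\psi^{(k+1)}(0+)|$), whereas the paper's argument would apply to any $C^\infty$ function with $\psi(0)=0$ and finite $\psi^{(\ell)}(0+)$, $\ell\le k+1$. Two small points worth tightening: (i) your stated justification for differentiating under the integral (``local boundedness on compact subsets of $(0,\infty)$'') is not quite the right thing, since $\beta t$ approaches $0$; what you actually need, and what your DCT step delivers, is boundedness of $\psi^{(j+1)}$ on $[0,\beta_0]$ for $j\le k$, which follows once you observe that $\psi^{(k+1)}(0+)<\infty$ forces $\psi^{(j+1)}(0+)<\infty$ for all $j\le k$; (ii) your Lévy-integral formula for $\psi^{(k+1)}$ requires $k\ge 1$, so the $k=0$ case (if intended) would need the one-line l'Hospital argument separately, though in the paper's applications only $k\ge 1$ occurs.
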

\begin{proof}
	By the general Leibniz rule 
	\begin{align*}
		\varphi^{(k)}(\beta) 
		&=\sum_{\ell=0}^k {k\choose \ell} \cdot \partial^{k-\ell} (\beta^{-1}) \cdot \partial^\ell (\psi(\beta))  
		=  k!  (-1)^k  \beta^{-k-1}  \sum_{\ell=0}^k (-1)^\ell  \frac{\beta^\ell}{\ell!}  \psi^{(\ell)}(\beta),
	\end{align*}
	where, by assumption, $\psi^{(\ell)}(0+)$ is finite for all $\ell=1,...,k+1$.\\
	Inspired by the Taylor expansion of $\psi$ we define the function $\psi_-\colon [0,\infty) \to\mathbb{R}$ via  
	\begin{align*}
		\psi_-(\beta) :=&  
		\sum_{\ell =0}^{k+1} (-1)^\ell \frac{\beta^\ell}{\ell!}   \psi^{(\ell)}(0+), \qquad \beta\geq 0.
	\end{align*} Clearly, $\psi_-$ is infinitely often differentiable on $(0,\infty)$ with 
\begin{equation} \label{eq_proof_DerivativeHelperPsi-}
		 \psi_-^{(n)} (\beta) = \sum_{\ell=n}^{k+1}(-1)^\ell   \frac{\beta^{\ell-n} }{(\ell-n)!} \psi^{(\ell)}(0+) \underset{\beta \downarrow 0}\longrightarrow (-1)^n \cdot \psi^{(n)}(0+), \quad \text{for }n\leq k+1. 
	\end{equation} 
Moreover, it holds
\begin{align*}
\frac{\beta^{k+1} \varphi^{(k)}(\beta) }{k! \cdot (-1)^k} - \psi_-(\beta) 
	&=  \sum_{\ell=0}^k (-1)^\ell \frac{\beta^\ell}{\ell!}  \left(\psi^{(\ell)}(\beta) - \psi^{(\ell)}(0+)\right) - (-1)^{k+1} \frac{\beta^{k+1}}{(k+1)!} \psi^{(k+1)}(0+),
\end{align*}
	which, after rearrangement, implies
\begin{equation} \label{eq_proof_limitPsiByBeta}
	\lim_{\beta\downarrow 0} \frac{\varphi^{(k)}(\beta) }{k! (-1)^k} =  \lim_{\beta\downarrow 0} \frac{\sum_{\ell=0}^k (-1)^\ell  \frac{\beta^\ell}{\ell!}  \left(\psi^{(\ell)}(\beta) - \psi^{(\ell)}(0+)\right) + \psi_-(\beta) }{\beta^{k+1}} + (-1)^k \frac{\psi^{(k+1)}(0+)}{(k+1)!}.
\end{equation}
Hence, in order to prove the claim, it only remains to show that the limit on the right-hand side of \eqref{eq_proof_limitPsiByBeta} vanishes. Hereby, the nominator $N(\beta)$ in the limit on the right-hand side tends to $0$ as $\beta\downarrow 0$. Moreover, the general Leibniz rule yields for any $n\geq 0$
	\begin{align*}
N^{(n)}(\beta) 
		&=\sum_{\ell=0}^k \frac{(-1)^\ell}{\ell!}  \left( \sum_{j=0}^{n\wedge\ell} {n \choose j} \frac{\ell!}{(\ell-j)!}  \beta^{\ell-j}  \cdot \partial_\beta^{n-j} \left(\psi^{(\ell)}(\beta) - \psi^{(\ell)}(0+)\right) \right) + 	\psi_-^{(n)}(\beta),
	\end{align*}
and thus for $n\leq k+1$, in the light of \eqref{eq_proof_DerivativeHelperPsi-}, we obtain 
\begin{align*}
	\lim_{\beta\downarrow 0} N^{(n)}(\beta)
	&= \sum_{\ell=0}^{n-1} (-1)^\ell    {n \choose \ell}  \psi^{(n)}(0+)   + (-1)^n\cdot \psi^{(n)}(0+) \\
	&= \psi^{(n)}(0+) \cdot \sum_{\ell=0}^{n} (-1)^\ell   {n \choose \ell} = 0,
\end{align*}
 which ensures that we may $(k+1)$-times apply l'Hospital's rule to the limit in \eqref{eq_proof_limitPsiByBeta}. This gives
 \begin{align*}
\lim_{\beta\downarrow 0} \varphi^{(k)}(\beta) &= \frac{(-1)^k}{k+1} \cdot  \lim_{\beta\downarrow 0} N^{(k+1)}(\beta) + \frac{\psi^{(k+1)}(0+)}{k+1} = \frac{\psi^{(k+1)}(0+)}{k+1},
\end{align*}
as stated.
\end{proof}

Now we can compute the Laplace transform of $u_k$.

\begin{proof}[Proof of Proposition \ref{Proposition_LaplaceTransform_profitable}, Equation \eqref{eq_Laplace_Transform_Profitable}]
	Observe that in complete analogy to the proof of Proposition \ref{Prop_Asymptotics_ExtraCond} we get
		\begin{align*}
			u_k(x) &= (-1)^k \cdot k! \left( \int_0^x W^{\ast k}(y)\diff y  -  \sum_{\ell=0}^{k}  \frac{\eta^{(\ell)}(0+)}{\ell!} \cdot  W^{\ast (k-\ell+1)}(x) \right), \; k\in \NN,
	\end{align*}
	with $\eta(0+)>0$, since $\psi'(0+)>0$, while by assumption and the results in \cite{BehmeStrietzel2021} we know that $\eta^{(\ell)}(0+)$ is finite for all $\ell=0,...,k$. Hence,  by the same standard computations as in the proof of Proposition \ref{Prop_Asymptotics_ExtraCond} we obtain 
	\begin{equation} \label{eq_Laplace_Transform_Profitableeta}
	\begin{aligned}
	 \int_0^\infty  e^{-\beta x} u_k(x) \diff x 
		 &=(-1)^k  k!  \left(\frac{1}{\beta \psi(\beta)^k} -  \frac{\psi'(0+)}{\psi(\beta)^{k+1}} - \sum_{\ell=1}^k   \frac{\eta^{(\ell)}(0+)}{\ell! \cdot  \psi(\beta)^{k-\ell+1}}  \right),
		\end{aligned} 
		\end{equation} since $\eta(0+)= \psi'(0+)$.
	 Note that as $\psi'(0+)>0$ implies $\Phi(0)=0$, Equation \eqref{eq_proof_Definition_scale_by_laplace} holds for all $\beta>0$ and this carries over to  \eqref{eq_Laplace_Transform_Profitableeta}.\\
	 As $\varphi(\beta) = \frac{\psi(\beta)}{\beta}$ immediately implies $\varphi(\Phi(q))=  \tfrac{q}{\Phi(q)} =  \eta(q)$, we may now apply Faà di Bruno's formula, cf. \cite[Eq. (2.2)]{Johnson2002},  which yields for $\ell=1,\ldots, k$
	 \begin{align*}
	 	 \eta^{(\ell)} (q) = \partial^\ell \varphi(\Phi(q)) &= \sum_{j=1}^\ell \varphi^{(j)}(\Phi(q)) \cdot B_{\ell,j}\left(\Phi'(q),...,\Phi^{(\ell-j+1)}(q)\right),
	 \end{align*} where $B_{\ell,j}$ denote the partial Bell polynomials. Thus, as $\Phi(q)\downarrow 0$ for $q\downarrow 0$, Lemma \ref{Lemma_PsiByBeta} implies 
	\begin{equation}\label{Lemma_EtaRepresentation} 
		\eta^{(\ell)}(0+) = \sum_{j=1}^\ell \frac{\psi^{(j+1)}(0+)}{(j+1)} \cdot B_{\ell,j}\left(\Phi'(0+),...,\Phi^{(\ell-j+1)}(0+)\right),\quad \ell=1,...,k,
	\end{equation}
and inserting this in \eqref{eq_Laplace_Transform_Profitableeta} gives \eqref{eq_Laplace_Transform_Profitable}.
\end{proof}

\begin{remark}
In \cite{BehmeStrietzel2021} it was shown that in the profitable case for any $\kappa >0$ 
\begin{equation*}
	\mathbb{E}_x[(\tau_0^-)^\kappa | \tau_0^-<\infty] \qquad \Leftrightarrow \qquad  \lim_{q\downarrow 0} \abs{D_q^\kappa\eta(q)}<\infty\qquad \Leftrightarrow \qquad \mathbb{E}_0[\abs{X_1}^{\kappa+1}]<\infty , 
\end{equation*} with $D_q^\kappa$ denoting the $\kappa$-th fractional derivative with respect to $q$. While the proof of the first equivalence turned out to be simple, the given proof of the second equivalence involved various arguments from the toolbox of Lévy processes and subordinators. For all special cases where $\kappa=k\in \NN$ however, the reasoning used above to obtain \eqref{Lemma_EtaRepresentation} provides an alternative proof for this second equivalence in a purely analytical way. 
\end{remark}

In order to prove the remainder of Proposition \ref{Proposition_LaplaceTransform_profitable} we need a preliminary result on the relations between derivatives of $\eta$ and $\psi$ as given in the next Lemma. 

\begin{lemma}
Let $\psi'(0+)\in(0,\infty)$. Choose $k\in\NN$ such that $\psi^{(k+1)}(0+)$ is finite, then 
 \begin{equation}\label{Lemma_DerivativeOfSumWithEtaI}
 	\lim_{q\downarrow 0}  \partial_{q}^n \left(\sum_{\ell=0}^k  \frac{\eta^{(\ell)}(0+)}{\ell!} \cdot \psi(q)^{\ell} \right)= \frac{\psi^{(n+1)}(0+)}{(n+1)}, \quad n=1,\ldots, k.
 \end{equation} 
 \end{lemma}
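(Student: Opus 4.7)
The plan is to relate the truncated sum $P_k(q) := \sum_{\ell=0}^k \frac{\eta^{(\ell)}(0+)}{\ell!}\psi(q)^\ell$ to the function $\varphi(q) = \psi(q)/q$ already handled in Lemma \ref{Lemma_PsiByBeta}. Since $\psi'(0+)>0$ forces $\Phi(0)=0$, for small $q>0$ we have $\psi(q)>0$ and $\Phi(\psi(q))=q$, so
$$\eta(\psi(q)) \;=\; \frac{\psi(q)}{\Phi(\psi(q))} \;=\; \varphi(q).$$
Writing $T_k^\eta(x) := \sum_{\ell=0}^k \frac{\eta^{(\ell)}(0+)}{\ell!} x^\ell$ for the $k$-th Taylor polynomial of $\eta$ at $0$ and setting $g(x) := \eta(x) - T_k^\eta(x)$, one has
$$P_k(q) \;=\; T_k^\eta(\psi(q)) \;=\; \eta(\psi(q)) - g(\psi(q)) \;=\; \varphi(q) - g(\psi(q)).$$

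Differentiating $n$ times and applying Lemma \ref{Lemma_PsiByBeta}, which supplies the limit $\lim_{q\downarrow 0}\varphi^{(n)}(q) = \psi^{(n+1)}(0+)/(n+1)$, reduces \eqref{Lemma_DerivativeOfSumWithEtaI} to the claim that $\lim_{q\downarrow 0} \partial_q^n g(\psi(q)) = 0$ for every $n=1,\dots,k$. Faà di Bruno's formula yields
$$\partial_q^n g(\psi(q)) \;=\; \sum_{j=1}^n g^{(j)}(\psi(q))\cdot B_{n,j}\bigl(\psi'(q),\dots,\psi^{(n-j+1)}(q)\bigr).$$
By construction, $g^{(j)}(0+) = \eta^{(j)}(0+) - (T_k^\eta)^{(j)}(0+) = 0$ for all $j=0,1,\dots,k$, with each $\eta^{(j)}(0+)$ finite under the standing assumption $\psi^{(k+1)}(0+)<\infty$ (cf.\ \cite{BehmeStrietzel2021}). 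Since $\psi$ is continuous with $\psi(0)=0$, each factor $g^{(j)}(\psi(q))$ tends to $0$ as $q\downarrow 0$, while the partial Bell polynomials $B_{n,j}(\psi'(q),\dots,\psi^{(n-j+1)}(q))$ remain bounded because all arguments involve derivatives of $\psi$ of order at most $n+1\leq k+1$, which are finite at $0+$. Consequently every term in the Faà di Bruno sum vanishes in the limit, giving the claim.

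The only genuinely delicate point is that $\eta$ is \emph{a priori} smooth only on $(0,\infty)$, so $g^{(j)}(0+)$ must be read as a one-sided limit of derivatives. But this is exactly the convention used throughout this paper and in \cite{BehmeStrietzel2021} when defining $\eta^{(\ell)}(0+)$, and because $\psi$ maps $(0,\varepsilon)$ into $(0,\infty)$ for small $\varepsilon>0$, the composition $g^{(j)}\circ\psi$ remains well-defined on a right-neighbourhood of $0$ and inherits the one-sided continuity needed for the above argument. No further regularity beyond what is already guaranteed by the hypothesis $\psi^{(k+1)}(0+)<\infty$ is required, so the proof is complete.
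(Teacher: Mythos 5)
Your proof is correct and takes a genuinely different, arguably cleaner, route than the paper's. The paper applies Faà di Bruno twice: once to $\varphi = \eta\circ\psi$ to extract the identity $\sum_{\ell=1}^n \eta^{(\ell)}(0+)\,B_{n,\ell}\bigl(\psi'(0+),\dots\bigr)=\psi^{(n+1)}(0+)/(n+1)$ (their Eq.~(5.9)), and once to compute $\lim_{q\downarrow 0}\partial_q^n(\psi(q)^\ell)=\ell!\,B_{n,\ell}$ for $n\geq\ell$ (and $0$ otherwise), then matches the two term by term. You instead observe that the sum in question is exactly $T_k^\eta\circ\psi$, so it differs from $\varphi=\eta\circ\psi$ by the composition $g\circ\psi$ with the Taylor remainder $g=\eta-T_k^\eta$, and you need only show that this remainder term has vanishing $n$-th derivative in the limit. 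This avoids the intermediate identity entirely, uses Faà di Bruno only once (on $g\circ\psi$), and makes it conceptually transparent why the answer should coincide with $\lim\varphi^{(n)}$: the polynomial is a flat perturbation of $\eta$ to order $k$. The one small inaccuracy is your bound ``order at most $n+1$'' on the $\psi$-derivatives inside $B_{n,j}$; since $j\geq 1$, the highest order appearing is $n-j+1\leq n\leq k$, but this only tightens your argument. Your closing remark about one-sided regularity of $\eta$ at $0+$ is exactly the right caveat and is handled consistently with the conventions of the paper.
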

\begin{proof}
First note that $\eta(\psi(q)) = \frac{\psi(q)}{\Phi(\psi(q))} = \frac{\psi(q)}{q} = \varphi(q)$ and thus, using Faà di Bruno's formula, cf. \cite[Eq. (2.2)]{Johnson2002}, we obtain 
\begin{align*}
	\partial_q^k \varphi(q) &= \sum_{\ell=1}^k \eta^{(\ell)}(\psi(q)) \cdot B_{k,\ell}\left(\psi'(q),...,\psi^{(k-\ell+1)}(q)\right).
\end{align*} 
Since $\psi(0)=0$ and due to Lemma \ref{Lemma_PsiByBeta}, taking the limit $q\downarrow 0$ on both sides of this formula implies 
	\begin{equation} \label{eq_EtaSumming}
	\sum_{\ell=1}^{k} \eta^{(\ell)}(0+) \cdot B_{k,\ell}\left(\psi'(0+),...,\psi^{(k-\ell+1)}(0+)\right) = \frac{\psi^{(k+1)}(0+)}{k+1}.
\end{equation} 
Further, Faà di Bruno's formula yields for any $\ell\geq 1$ and $n=1,\ldots k$ 
\begin{align*}
	\partial_q^n \left(\psi(q)^\ell\right) &= \sum_{j=1}^{n\wedge \ell} \frac{\ell!}{(\ell-j)!} \cdot \psi(q)^{\ell-j} \cdot B_{n,j}\left(\psi'(q),...,\psi^{(n-j+1)}(q)\right) \\ 
	&\to \begin{cases}
		\ell! \cdot B_{n,\ell}\left(\psi'(0+),...,\psi^{(n-\ell+1)}(0+)\right), &n\geq \ell, \\ 
		0, & n<\ell,
	\end{cases}  \qquad \text{ as } q \downarrow 0.
\end{align*}
Now, as $\partial_q^n \left(\psi(q)^0\right)= \partial_q^n 1 = 0$ for any $n\geq 1$, we observe that for any $n= 1,\ldots, k$ 
\begin{equation*}
	\lim_{q\downarrow 0}  \partial_q^n \sum_{\ell=0}^k  \frac{\eta^{(\ell)}(0+)}{\ell!} \cdot \psi(q)^{\ell} = \sum_{\ell=1}^{k\wedge n} \eta^{(\ell)}(0+) \cdot B_{n,\ell}\left(\psi'(0+),...,\psi^{(n-\ell+1)}(0+)\right)
\end{equation*} 
and \eqref{Lemma_DerivativeOfSumWithEtaI} follows immediately from \eqref{eq_EtaSumming}. 
\end{proof}

We are now ready to provide the remainder of the proof of Proposition \ref{Proposition_LaplaceTransform_profitable}. 

\begin{proof}[Proof of Proposition \ref{Proposition_LaplaceTransform_profitable}, Equation \eqref{eq_Limit_Laplace_Transform_Profitable}] Again, we aim to use a standard Tauberian theorem to prove the stated asymptotics. First, observe that by \cite[Lemma 2.3]{kuznetsov2011} the scale function $x\mapsto W(x)$ is almost everywhere differentiable and that left and right derivatives of $W(x)$ exist on $(0,\infty)$. Thus, in view of \eqref{eq_generalMoments_convolution}, it  follows that $u_k'(x)$ also exists almost everywhere and the expression is well-defined. Further, from \eqref{eq_Laplace_Transform_Profitableeta} it follows via integration by parts that
	\begin{align*}
		\beta \cdot (-1)^k  k!  \left(\frac{1}{\beta \psi(\beta)^k}  - \sum_{\ell=0}^k   \frac{1}{\ell!   \psi(\beta)^{k-\ell+1}} \eta^{(\ell)}(0+) \right) &= \beta \int_0^\infty e^{-\beta x} u_k(x) \diff x \\
		&= \int_0^\infty e^{-\beta x} u_k'(x) \diff x + u_k(0+).
	\end{align*}
Hereby, a $k$-fold application of l'Hospitals rule using Lemma \ref{Lemma_PsiByBeta} and \eqref{Lemma_DerivativeOfSumWithEtaI} yields
\begin{align*}
	\lefteqn{\lim_{\beta \downarrow 0} \beta\cdot  (-1)^k   k!  \left(\frac{1}{\beta \psi(\beta)^k} - \sum_{\ell=0}^k   \frac{1}{\ell!  \psi(\beta)^{k-\ell+1}}  \eta^{(\ell)}(0+) \right) }\\
	&= (-1)^k  k!\cdot\lim_{\beta \downarrow 0}\left( \frac{1}{\varphi(\beta)^{k+1}} \cdot \frac{\varphi(\beta) - \sum_{\ell = 0}^k \frac{\eta^{(\ell)}(0+) }{\ell ! } \cdot \psi(\beta)^\ell}{\beta^k}\right)\\
	&= (-1)^k  k! \cdot \frac{1}{\psi'(0+)^{k+1}} \cdot \lim_{\beta \downarrow 0}\frac{\varphi(\beta) - \sum_{\ell = 0}^k \frac{\eta^{(\ell)}(0+) }{\ell ! } \cdot \psi(\beta)^\ell}{\beta^k}\\
	&= 0.
\end{align*}
Thus $\int_{(0,\infty)} e^{-\beta x} u_k'(x) \diff x \to - u_k(0+)$ as $\beta \to 0$ and \cite[Thm. V.4.3]{Widder1946} yields that
$$\lim_{x\to \infty} (u_k(x) - u_k(0+)) = \lim_{x\to \infty} \int_0^x u_k'(y) dy = - u_k(0+),$$
which implies the statement.
\end{proof}

 \subsection{Sketches for the proofs of the results in Section \ref{S2}}\label{S3b}
 The proofs of Propositions \ref{Proposition_Unprofitable_PH} and \ref{Proposition_Profitable_PH} are rather straightforward but lengthy. We thus only sketch some crucial steps of the derivations and explain the general idea. Due to the similarities, we restrict our presentation to the profitable setting $\psi'(0)>0$ as treated in Proposition \ref{Proposition_Profitable_PH}. 
 
First note that by differentiation of \eqref{Lemma_derivative_inverse}  
\begin{align} \label{derivativeinverse-diff1}
	\Phi''(q)&=  - \Psi''(\Phi(q)) \Phi'(q)^3 = - \frac{\psi''(\Phi(q))}{\psi'(\Phi(q))^3} ,\\
\text{and} \quad 	\Phi'''(q) &= 3\cdot \frac{\psi''(\Phi(q))^2}{\psi'(\Phi(q))^5} - \frac{\psi'''(\Phi(q))}{\psi'(\Phi(q))^4}. \label{derivativeinverse-diff2}
\end{align}
Second, using an approach similar to \cite[Section 10.2]{Avram2020}, a partial fraction decomposition yields 
\begin{equation} \label{eq_Helper_NewShortedProof} 
	\frac{1}{\psi(\theta)-q} = \sum_{i=0}^n \frac{1}{\psi'(\phi_i(q)) \cdot (\theta -\phi_i(q))}.
\end{equation}
 Thus, setting $\theta = 0$ and letting $q\downarrow 0$ 
 \begin{equation}
\frac{\psi''(0)}{2 \psi'(0)^2}=\sum_{i=1}^n\frac{1}{\psi'(\phi_i(0))\cdot \phi_i(0)}.  \label{eq_appendix21}
\end{equation} 
Likewise, differentiating \eqref{eq_Helper_NewShortedProof} with respect to $q$ and then letting $q\downarrow 0$ yields 
\begin{equation}
	\frac{3\psi''(0)^2}{2\psi'(0)^4} - \frac{2\psi'''(0)}{3\psi'(0)^3} = 	 \sum_{i=1}^n \left( \frac{2}{\psi'(\phi_i(0))^2\phi_i(0)^2} + \frac{2\psi''(\phi_i(0))}{\psi'(\phi_i(0))^3\phi_i(0)}\right). \label{eq_appendix22}
\end{equation} 
Third, in the perturbed Cramér-Lundberg model we have by \cite[Lemma 3.1]{kuznetsov2011} 
$$W^{(q)}(0)=\frac{1}{p}\cdot \mathds{1}_{\{\sigma^2=0\}}.$$
 Inserting this in \eqref{eq_Ex_PHLevy_qScale} and letting $q\downarrow 0$ (after differentiating once or twice) yields 
 	\begin{align}
 		\frac{1}{p} \mathds{1}_{\{\sigma^2=0\}} - \frac{1}{\psi'(0)}&=	\sum_{i=1}^n\frac{1}{\psi'(\phi_i(0))}, \label{eq_appendixA}\\ 
 		-  \frac{\psi''(0)}{\psi'(0)^3}&= 	\sum_{i=1}^n\frac{\psi''(\phi_i(0))}{\psi'(\phi_i(0))^3}, \label{eq_appendixB}\\
 		\text{and}\quad	\frac{3\psi''(0)^2}{\psi'(0)^5} - \frac{\psi'''(0)}{\psi'(0)^4} &= \sum_{i=1}^n \left(\frac{3 \psi''(\phi_i(0))^2}{\psi'(\phi_i(0))^5} - \frac{\psi'''(\phi_i(0))}{\psi'(\phi_i(0))^4}\right).	\label{eq_appendixC}
 	\end{align}
  Now the remainder of the proof relies on standard calculus: Use the explicit representation of the scale function given in \eqref{eq_Ex_PHLevy_qScale} in order to calculate its primitive (w.r.t. $x$) and derivatives (w.r.t. $q$) as well as their limits for $q\downarrow 0$. Inserting the obtained expressions  in terms of the derivatives and roots of the Laplace exponent $\psi$ into the formulas given in  Lemma \ref{Theorem_FirstMoments} then yields the result after suitable rearrangements via Equations \eqref{derivativeinverse-diff1}, \eqref{derivativeinverse-diff2}, and \eqref{eq_appendix21} - \eqref{eq_appendixC}.

 \section*{Acknowledgements}
 We thank the anonymous referee of an earlier version of this manuscript for his/her comments that helped us to improve this article. 
 
\bibliography{literatureApplicationTTR.bib}

\begin{thebibliography}{10}

\bibitem{asmussenalbrecher}
S.~Asmussen and H.~Albrecher.
\newblock {\em Ruin probabilities}.
\newblock World Scientific, 2nd edition, 2010.

\bibitem{Avram2020}
F.~Avram, D.~Grahovac, and C.~Vardar-Acar.
\newblock The {W}, {Z} scale functions kit for first passage problems of spectrally negative {L}\'evy processes, and applications to control problems.
\newblock {\em ESAIM: Probability and Statistics}, 24:454--525, 2020.

\bibitem{BehmeStrietzel2021}
A.~Behme and P.L. Strietzel.
\newblock On moments of downward passage times for spectrally negative {L}\'evy processes.
\newblock 2021.
\newblock Preprint. Available on arXiv:2106.00401.

\bibitem{bladtnielsen}
M.~Bladt and B.F. Nielsen.
\newblock {\em Matrix-Exponential Distributions in Applied Probability}.
\newblock Springer, 2017.

\bibitem{Dickson2002}
D.C.M. Dickson and H.R. Waters.
\newblock The distribution of the time to ruin in the classical risk model.
\newblock {\em ASTIN Bulletin}, 32(2):299--313, 2002.

\bibitem{DoneyBuch}
R.A. Doney.
\newblock Fluctuation theory for {L}\'evy processes.
\newblock In Jean Picard, editor, {\em Lecture Notes in Mathematics}, volume 1897. Springer, 2007.

\bibitem{Drekic2004}
S.~Drekic, J.E. Stafford, and G.E. Willmot.
\newblock Symbolic calculation of the moments of the time of ruin.
\newblock {\em Insurance: Mathematics and Economics}, 34(1):109--120, 2004.

\bibitem{Drekic2003}
S.~Drekic and G.E. Willmot.
\newblock On the density and moments of the time of ruin with exponential claims.
\newblock {\em ASTIN Bulletin}, 33(1):11--21, 2003.

\bibitem{Egami2014}
M.~Egami and K.~Yamazaki.
\newblock Phase-type fitting of scale functions for spectrally negative {L}\'evy processes.
\newblock {\em Journal of Computational and Applied Mathematics}, 264:1--22, 2014.

\bibitem{EgidioDosReis2000}
A.D. {Egidio dos Reis}.
\newblock On the moments of ruin and recovery times.
\newblock {\em Insurance: Mathematics and Economics}, 27(3):331--343, 2000.

\bibitem{Feller1971}
W.~Feller.
\newblock {\em An {I}ntroduction to {P}robability {T}heory and its {A}pplications. {P}art 2}.
\newblock Wiley, 2. edition, 1971.

\bibitem{Frostig2004}
E.~Frostig.
\newblock Upper bounds on the expected time to ruin and on the expected recovery time.
\newblock {\em Advances in Applied Probability}, 36(2):377--397, 2004.

\bibitem{Frostig2012}
E.~Frostig, S.M. Pitts, and K.~Politis.
\newblock The time to ruin and the number of claims until ruin for phase-type claims.
\newblock {\em Insurance: Mathematics and Economics}, 51(1):19--25, 2012.

\bibitem{Gerber1979}
H.U. Gerber.
\newblock {\em An Introduction to Mathematical Risk Theory}.
\newblock S. S. Huebner Foundation, 1979.

\bibitem{Gerber1998}
H.U. Gerber and E.S.W. Shiu.
\newblock On the time value of ruin.
\newblock {\em North American Actuarial Journal}, 2(1):48--72, 1998.

\bibitem{Hubalek2010}
F.~Hubalek and A.E. Kyprianou.
\newblock Old and new examples of scale functions for spectrally negative {L}\'evy processes.
\newblock In {\em Sixth Seminar on Stochastic Analysis, Random Fields and Applications}, Progress in Probability, pages 119--146. Birkh\"auser, 2010.

\bibitem{Hult2005}
H.~Hult, F.~Lindskog, T.~Mikosch, and G.~Samorodnitsky.
\newblock Functional large deviations for multivariate regularly varying random walks.
\newblock {\em Ann. Appl. Probab.}, 15:2651--2680, 2005.

\bibitem{Ivanovs2021}
J.~Ivanovs.
\newblock On scale functions for {L}\'evy processes with negative phase-type jumps.
\newblock {\em Queueing Systems}, 98:3--19, 2021.

\bibitem{Johnson2002}
W.P. Johnson.
\newblock The curious history of {F}a\`a di {B}runo's formula.
\newblock {\em The American Mathematical Monthly}, 109(3):217--234, 2002.

\bibitem{Kulenko2008}
N.~Kulenko and H.~Schmidli.
\newblock Optimal dividend strategies in a {C}ram{\'{e}}r-{L}undberg model with capital injections.
\newblock {\em Insurance: Mathematics and Economics}, 43(2):270--278, 2008.

\bibitem{kuznetsov2011}
A.~Kuznetsov, A.E. Kyprianou, and V.~Rivero.
\newblock The theory of scale functions for spectrally negative {L}\'evy processes.
\newblock In {\em L\'evy Matters II}, Springer Lecture Notes in Mathematics. Springer, 2013.

\bibitem{Kyprianou2014}
A.E. Kyprianou.
\newblock {\em Fluctuations of {L}\'evy processes with {A}pplications}.
\newblock Springer, 2nd edition, 2014.

\bibitem{Kyprianou2007}
A.E. Kyprianou and Z.~Palmowski.
\newblock Distributional study of {D}e {F}inetti's dividend problem for a general {L}\'evy insurance risk process.
\newblock {\em Journal of Applied Probability}, 44(2):428--443, 2007.

\bibitem{Lee2014}
W.Y. Lee and G.E. Willmot.
\newblock The moments of the time to ruin in dependent {S}parre {A}ndersen models with {C}oxian claim sizes.
\newblock {\em Scandinavian Actuarial Journal}, 2016(6):550--564, 2016.

\bibitem{Li2005}
S.~Li and J.~Garrido.
\newblock The {G}erber-{S}hiu function in a {S}parre {A}ndersen risk process perturbed by diffusion.
\newblock {\em Scandinavian Actuarial Journal}, 2005(3):161--186, 2005.

\bibitem{Lin1999}
X.S. Lin and G.E. Willmot.
\newblock Analysis of a defective renewal equation arising in ruin theory.
\newblock {\em Insurance: Mathematics and Economics}, 25(1):63--84, 1999.

\bibitem{Lin2000}
X.S. Lin and G.E. Willmot.
\newblock The moments of the time of ruin, the surplus before ruin, and the deficit at ruin.
\newblock {\em Insurance: Mathematics and Economics}, 27(1):19 -- 44, 2000.

\bibitem{Picard1998}
P.~Picard and C.~Lef{\`{e}}vre.
\newblock The moments of ruin time in the classical risk model with discrete claim size distribution.
\newblock {\em Insurance: Mathematics and Economics}, 23(2):157--172, 1998.

\bibitem{Pitts2008}
S.M. Pitts and K.~Politis.
\newblock Approximations for the moments of ruin time in the compound {P}oisson model.
\newblock {\em Insurance: Mathematics and Economics}, 42(2):668--679, 2008.

\bibitem{Thonhauser2007}
S.~Thonhauser and H.~Albrecher.
\newblock Dividend maximization under consideration of the time value of ruin.
\newblock {\em Insurance: Mathematics and Economics}, 41(1):163--184, 2007.

\bibitem{Tsai2002}
C.C. Tsai and G.E. Willmot.
\newblock On the moments of the surplus process perturbed by diffusion.
\newblock {\em Insurance: Mathematics and Economics}, 31(3):327--350, 2002.

\bibitem{Widder1946}
D.V. Widder.
\newblock {\em The Laplace Transform}.
\newblock Princeton University Press, 1946.

\end{thebibliography}

\end{document}